\newtheorem{theorem}{Theorem}[section]
\newtheorem{proposition}[theorem]{Proposition}
\newtheorem{lemma}[theorem]{Lemma}
\newtheorem{remark}[theorem]{Remark}
\newtheorem{definition}[theorem]{Definition}
\DeclareMathOperator{\id}{id}
\DeclareMathOperator{\Orth}{O}
\DeclareMathOperator{\diag}{diag}
\DeclareMathOperator{\Span}{Span}
\begin{document}

\title[Self-similar curve shortening flow in hyperbolic 2-space]{Self-similar curve shortening flow in hyperbolic 2-space}

\author{Eric Woolgar}
\address{Dept of Mathematical and Statistical Sciences, and Theoretical Physics Institute, University of Alberta, Edmonton, AB, Canada T6G 2G1.}
\email{ewoolgar(at)ualberta.ca}

\author{Ran Xie}
\address{School of Mathematics and Statistics, Xi'an Jiaotong University, Xi'an, PR China}
\email{dhsieh29(at)gmail.com}

\date{\today}

\begin{abstract}
\noindent We find and classify self-similar solutions of the curve shortening flow in standard hyperbolic 2-space. Together with earlier work of Halld\'orsson on curve shortening flow in the plane and Santos dos Reis and Tenenblat in the 2-sphere, this completes the classification of self-similar curve shortening flows in the constant curvature model spaces in 2-dimensions.
\end{abstract}

\maketitle
\section{Introduction}
\setcounter{equation}{0}

\noindent
Consider a smooth map $X:I\times [0,T)\to M:(x,t)\mapsto X(x,t)=:X_t(x)$, for $I$ a connected interval of the real line and $T\in (0,\infty]$. We will take $M\subset {\mathbb R}^3$ to be a surface, though the flow can also be contemplated for $M$ a manifold with arbitrary dimension. Then a curve shortening flow is a solution of the differential equation
\begin{equation}
\label{eq1.1}
\frac{\partial X}{\partial t}=\kappa_g N\ ,
\end{equation}
where $\kappa_g$ is the geodesic curvature of the curve $X_t(x)$ (for fixed $t$) and $N$ is the principal normal to the curve, with the signs chosen so that $\kappa_g N$ points to the concave side of the curve.

The theory of the curve shortening flow, often denoted simply as CSF, was developed in a number of papers in the 1980s, among them Gage \cite{Gage1, Gage2}, Gage and Hamilton \cite{GH}, and Grayson \cite{Grayson}. There is now an extensive literature, including at least one book \cite{CZ}.

Geodesics are trivial solutions of this flow since they have $\kappa_g=0$. The next simplest solutions are curves that evolve self-similarly, sometimes called soliton solutions. This class includes geodesics but also includes nontrivial examples. The well-known \emph{grim reaper} or \emph{paperclip}, whose trace at fixed $t$ is the graph $y(x)=C+\log\sec x$, is an example of a non-geodesic self-similar solution of CSF in ${\mathbb R}^2$. In a thesis in 2013, Halld\'orsson \cite{Halldorsson1, Halldorsson2} classified the self-similar solutions in ${\mathbb R}^2$. Self-similar solutions on the 2-sphere ${\mathbb S}^2$ were classified by Santos dos Reis and Tenenblat \cite{SdRT} in 2019. In the present paper, we study self-similar curve shortening flows in the remaining complete constant curvature surface, standard hyperbolic 2-space ${\mathbb H}^2$.

\begin{definition}\label{definition1.1}
Two unit speed curves $\gamma, {\tilde \gamma}:I\to (M,g)$ are \emph{congruent} if there is an isometry $\varphi$ of $(M,g)$ such that $\varphi\circ\gamma={\tilde \gamma}$. A curve shortening flow \emph{evolves by isometries} if $\gamma_t(s)$ is a unit speed curve for each $t\in J$, $0\in J \subset {\mathbb R}$, and there is a one-parameter family of isometries $\varphi_t$ such that $\varphi_t\circ\gamma=\gamma_t$; i.e., each $\gamma_t$ is congruent to every other one. A curve that evolves by isometries is also said to be \emph{self-similar} or a \emph{soliton}.
\end{definition}

We will further take solitons to be \emph{inextendible}, meaning that the soliton curve admits a unit speed parametrization $X(s)$ on an open connected interval $I\in{\mathbb R}$ such that $X(s)$ cannot be defined as a unit speed curve on any open connected interval $J\supset I$ containing the closure of $I$ as a proper subset.

Note that self-similarity of curves only makes sense on surfaces that have families of sufficiently smooth isometries. By the fundamental theorem of curves in surfaces, when $M$ is a surface we can determine whether the curves $\gamma$ and ${\tilde \gamma}$ are congruent by comparing their curvatures as functions of arclength. Using this fact, a consequence of \cite[Proposition 4.2]{Gage3} is that closed curves on surfaces cannot evolve by isometries under CSF. However they can evolve by the composition of isometries and  time-dependent rescalings; often the terms``self-similar'' and ``scaling soliton'' are employed to describe curves that evolve in this more general sense, but this paper is concerned with solitons that do not rescale in time.

This brings us to our main theorem.
\begin{theorem}\label{theorem1.2}
Let ${\mathbb M}^3$ be ${\mathbb R}^3$ equipped with the quadratic form (Minkowski metric) $\eta=\diag (1,1,-1)$ and denote the Minkowski inner product by $\langle U,W\rangle_{\eta}=\eta(U,W)$ for $U,W\in{\mathbb M}^3$. For each ${\tilde v}\in {\mathbb M}^3\setminus \{ 0 \}$ there is a $2$-parameter family of nontrivial solutions $X$ of CSF evolving by isometries in standard hyperbolic $2$-space ${\mathbb H}^2$. These soliton curves are complete, unbounded, and properly embedded, and asymptote either to a horocycle or to a geodesic. The soliton cannot asymptote to a geodesic
\begin{itemize}
\item [(i)] at both ends, or
\item [(ii)] if ${\tilde v}$ is timelike, or
\item [(iii)] if $\mu(s)=\left \langle X(s), {\tilde v} \right \rangle_{\eta}$ has a critical point (and it can have at most one critical point), or
\item [(iv)] if $\mu(s)$ has a zero (and it can have at most one zero).
\end{itemize}
\end{theorem}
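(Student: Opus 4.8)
The plan is to work in the hyperboloid model ${\mathbb H}^2=\{X\in{\mathbb M}^3:\langle X,X\rangle_\eta=-1,\ X^3>0\}$, whose orientation preserving isometry group is $\SO^+(2,1)$. Every Killing field is then $V(X)=\tilde v\times_\eta X$ for a unique $\tilde v\in{\mathbb M}^3$, where the Lorentzian cross product is fixed by $\langle \tilde v\times_\eta X,W\rangle_\eta=\det(\tilde v,X,W)$; this is the origin of the parameter $\tilde v$. Writing $T=X'$ for the unit tangent and $N$ for the unit normal in the surface, the (totally umbilic) Frenet relations read $X'=T$, $T'=\kappa_g N+X$, $N'=-\kappa_g T$. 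Since a soliton moves along the flow of $V$, matching the normal speeds in \eqref{eq1.1} gives the soliton equation $\kappa_g=\langle V,N\rangle_\eta$.

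First I would introduce $\mu(s)=\langle X,\tilde v\rangle_\eta$ together with $\tau=\langle T,\tilde v\rangle_\eta$ and $\nu=\langle N,\tilde v\rangle_\eta$. Expanding $\tilde v=\tau T+\nu N-\mu X$ in the $\eta$-orthonormal frame $\{T,N,X\}$ yields the conserved quantity $\tau^2+\nu^2-\mu^2=\langle\tilde v,\tilde v\rangle_\eta=:\epsilon$, while a triple-product computation identifies $\langle V,N\rangle_\eta=\det(\tilde v,X,N)=\pm\tau$, so that, after fixing the orientation of $N$, the soliton equation becomes simply $\kappa_g=-\mu'$. Substituting into the Frenet relations collapses the entire problem to $\tau=\mu'$, $\nu'=-\kappa_g\tau=(\mu')^2\ge 0$, and the single second order equation $\mu''=\mu-\nu\mu'$. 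Two structural consequences will do almost all of the work below: $\nu$ is monotone non-decreasing, and at every critical point of $\mu$ the equation reduces to $\mu''=\mu$.

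For existence I would note that, for a prescribed profile $\nu(s)$, the equation $\mu''+\nu\mu'-\mu=0$ is linear in $\mu$, so a Gronwall estimate on $(\mu,\mu',\nu)$ rules out blow-up in finite arclength and the solution extends to all $s\in{\mathbb R}$; reconstructing $X$ from $\kappa_g=-\mu'$ through the Frenet system then produces a complete curve. Prescribing initial data $(X(0),T(0))\in T^1{\mathbb H}^2$ and quotienting by the one-parameter reparametrization $s\mapsto s+c$ yields, for each fixed $\tilde v$, the asserted $2$-parameter family. The asymptotic dichotomy comes from the monotone limits $\nu(s)\to\nu_\pm\in[-\infty,\infty]$: on an end where $\nu$ is bounded, $\int(\mu')^2<\infty$ drives the orbit into the equilibrium $(\mu,\mu',\nu)\to(0,0,\pm\sqrt\epsilon)$, whence $\kappa_g\to 0$ and the curve asymptotes to a geodesic; on an end where $\nu\to\pm\infty$, the conserved relation forces $\mu\to\pm\infty$ with $\mu'\to\mp1$, i.e. $|\kappa_g|\to 1$, the geodesic curvature of a horocycle. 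Establishing this dichotomy rigorously — excluding oscillatory or otherwise exotic behavior, identifying the limiting geodesic and horocycle, and deducing properness and embeddedness (which I expect to follow from the monotonicity of $\nu$ preventing the curve from returning on itself) — is the step I expect to be the main obstacle.

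The four restrictions then follow cleanly. Item (ii) is immediate: a geodesic end forces $\mu\to0$ and hence $\nu^2\to\epsilon$, impossible when $\tilde v$ is timelike ($\epsilon<0$). Items (i), (iii), (iv) all rest on the critical-point identity $\mu''=\mu$, which shows $\mu\ge0$ at every local minimum and $\mu\le0$ at every local maximum. This at once forbids two critical points — consecutive ones would be a minimum then a maximum (or the reverse), incompatible with these sign constraints and the monotonicity of $\mu$ between them — and forbids two zeros, since Rolle would interpose a critical point realizing an interior extremum of the forbidden sign; this gives the ``at most one'' assertions. It also shows that a critical point keeps $\mu$ bounded away from $0$, while a transverse zero forces $\mu$ to return to $0$ and thereby manufacture a forbidden extremum, so neither is compatible with the $\mu\to0$ of a geodesic end, proving (iii) and (iv). Finally (i) follows because $\mu\to0$ at both ends with $\mu\not\equiv0$ would again produce a nonzero interior extremum contradicting $\mu''=\mu$; hence a nontrivial soliton cannot be geodesic at both ends.
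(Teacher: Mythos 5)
Your reduction is the same one the paper uses: the frame functions $\tau=\langle T,\tilde v\rangle_\eta$, $\nu=\langle N,\tilde v\rangle_\eta$, $\mu=\langle X,\tilde v\rangle_\eta$, the autonomous system with $\kappa_g$ proportional to $\tau=\mu'$, the conserved quantity $\tau^2+\nu^2-\mu^2=\epsilon$, the monotonicity of $\nu$, and the critical-point identity $\mu''=\mu$. Your treatment of items (i), (iii), (iv) via that identity and of item (ii) via $\nu^2\to\epsilon<0$ reproduces Lemmas \ref{lemma4.1} and \ref{lemma4.5}--\ref{lemma4.8} essentially verbatim, and your parameter count agrees with the paper's. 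However, two steps you defer or wave at are genuine gaps. First, on an end where $\nu$ diverges you assert that the conserved relation ``forces $\mu\to\pm\infty$ with $\mu'\to\mp 1$,'' but the relation $\mu^2=\tau^2+\nu^2-\epsilon$ is perfectly consistent with $\tau=\mu'$ blowing up; nothing in your sketch rules that out. The paper needs a separate a priori bound (Lemma \ref{lemma4.4}), proved by a sign/comparison argument of the form $\tau'\le(a\nu+2)\tau-2\nu$ once $a\nu+2<0$, and only then extracts the limit $\tau\to\mp\tfrac1a$ by evaluating the first equation at the critical points of $\tau$ (where $\tau=-\mu/(a\nu)$) and using $\mu/\nu\to\pm1$, which follows from boundedness of $(\nu+\mu)(\nu-\mu)=\tau^2+\epsilon$. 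You correctly flag this as the main obstacle, but no mechanism is proposed, and this is the technical heart of the horocycle half of the dichotomy.

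Second, your route to embeddedness does not work as stated: monotonicity of $\nu$ only forbids the \emph{frame} from returning to itself (so it excludes closed curves and tangential self-contact), but at a transversal self-intersection $X(s_1)=X(s_2)$ the normals differ, so $\nu(s_1)\ne\nu(s_2)$ is entirely consistent with strict monotonicity. Likewise $\mu(s_1)=\mu(s_2)$ only produces a critical point of $\mu$ in between, which is allowed (there can be exactly one). The paper's Lemma \ref{lemma4.9} instead combines Gauss--Bonnet, which gives $\oint\kappa_g\,ds=A+\pi+\alpha>\pi$ around the loop cut off by a self-intersection, with the soliton identity $\oint\kappa_g\,ds=a\oint\mu'\,ds=a(\mu(s_2)-\mu(s_1))=0$, a contradiction; properness then follows because every non-geodesic solution escapes compact sets. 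Some version of this global argument is needed and is absent from your proposal. A smaller point: your exclusion of a zero of $\mu$ coexisting with a geodesic end handles only transverse zeros; the degenerate case $\mu(s_1)=\mu'(s_1)=0$ must be dispatched by observing that $(\tau,\nu,\mu)=(0,\pm1,0)$ is a fixed point of the system, so uniqueness forces the curve to be a geodesic, which is excluded as trivial.
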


While the results of this paper can be viewed as a natural outgrowth of the work reported in \cite{Halldorsson1, Halldorsson2} and \cite{SdRT}, there are important independent reasons to study the curve shortening flow and its solitons in hyperbolic space. There are two generalization of this problem to higher dimensions which are important for physics. The first generalization is the mean curvature flow of codimension one objects (hypersurfaces) flowing in standard hyperbolic $n$-space. The fixed points are minimal surfaces bounded by a curve on the boundary at conformal infinity. These surfaces play an important role in the AdS/CFT correspondence. The surface areas of the minimal surfaces are proportional to the entanglement entropy of the region $R$ enclosed by the curve on the conformal boundary. This entropy is a property of quantum states of a conformal field theory defined on the conformal boundary, and encodes uncertainty in measurements of those states within $R$ due to correlations which extend beyond $R$. One way to construct these minimal surfaces is as limits of convergent mean curvature flows. But these mean curvature flows could instead approach ``generalized fixed points'', the self-similar solutions, as limits. The problem of self-similar curve shortening flows in ${\mathbb H}^2$ is the obvious first step to complete before addressing the higher dimensional mean curvature flow version that arises in AdS/CFT physics. Our result that self-similar curves asymptote to geodesics only in limited cases, and never at both ends, may suggest that solitons of the higher dimensional problem would not meet the boundary at infinity orthogonally, and perhaps not even transversally.

The second application arises by considering dimension one flows in a spacetime of dimension $n+1$ with metric $g=-dt^2+a^2(t)h$, for $h$ a metric on a Riemannian $n$-manifold. This situation is of interest in cosmology, particularly when $h$ is any constant curvature metric. So-called \emph{cosmic strings} in this spacetime can be modelled as solutions of the wavemap equation for an embedded timelike 2-surface $X:(u,w)\mapsto (X^0,X^i)$, $i=1,\dots,n$, in spacetime. In this setting this becomes
\begin{equation}
\label{eq1.2}
\Box_{\eta} X^i +2H(t) \eta^{ab}\partial_aX^i\partial_b X^0=0\ ,
\end{equation}
where $H(t):=\frac{a'(t)}{a(t)}$, $\eta$ is the induced Lorentzian metric on the 2-surface, and $\Box_{\eta}$ is the trace of the Hessian of $\eta$ (regarded as operating on an $n$-tuple of functions $X^i$). If one takes the $w$ parameter to be the $t$-coordinate and the $u$ parameter to be an arclength $s$ along the curves $X_t(s)=X(s,t)$, then $X^0=t$ and one obtains \cite[Equation 15 with $\Delta=\frac{\partial^2}{\partial s^2}$ and assuming the second time derivative term is small enough to ignore]{Kibble}
\begin{equation}
\label{eq1.3}
\frac{\partial^2 X^i}{\partial s^2}=2H(t)\frac{\partial X^i}{\partial t}\ .
\end{equation}
For inflationary cosmology models, $H(t)$ is constant, and then \eqref{eq1.3} is a curve shortening flow. Therefore, for this problem we increase the dimension of the ambient manifold, which should now be any constant curvature space, and study curve shortening flows in it.

The manuscript is organized as follows. In Section 2, we first review the model of ${\mathbb H}^2$ which represents it as the $z\ge 1$ sheet of the hyperboloid $x^2+y^2-z^2=-1$ in the Minkowski spacetime ${\mathbb M}^3$. This model permits us straightforwardly to adapt ideas of \cite{Halldorsson2, SdRT} to our setting, which we do in Sections 2.2--2.4. Specifically, we formulate the problem as an autonomous system of ordinary differential equations.
In Section 3, we discuss certain special solutions such as geodesics and horocycles. Horocycles were discussed by Grayson \cite{Grayson} and arguably are hyperbolic space analogues of the grim reaper self-similar flow in ${\mathbb R}^2$. For completeness, we briefly discuss hypercycles, which are scaling solitons (so they evolve by a composition of isometries and rescalings). In Section 4 we analyze the autonomous system from Section 2 and prove a series of lemmata leading to the proof of Theorem \ref{theorem1.2}.

After this paper appeared in preprint form, we learned of the beautiful thesis \cite{daSilva} which independently derived similar results in great detail. This work is now described in \cite{ST}.

\subsection{Acknowledgements} We are indebted to K Tenenblat for comments on the preprint version of this paper, which helped us to improve our presentation (especially, to correct an error in Proposition 2.3), and for bringing reference \cite{daSilva} to our attention. EW is grateful to the organizers and audience of the Clark University Geometric Analysis seminar of 13 Nov 2020, where these results were presented, for their interest and insightful comments. We thank to Michael Yi Li and Yingfei Yi for organizing the 2019 International Undergraduate Summer Enrichment Programme (IUSEP) at the University of Alberta, during which this work was begun. EW is supported by NSERC Discovery Grant RGPIN--2017--04896.

\section{The hyperboloidal model of hyperbolic space}
\setcounter{equation}{0}

\subsection{Elementary properties} We review some basic facts of the hyperboloidal model, which represents hyperbolic 2-space ${\mathbb H}^2$ as the $z\ge 1$ sheet of the hyperboloid $x^2+y^2-z^2=-1$ in Minkowski $3$-space ${\mathbb M}^3:=\left ( {\mathbb R}^3, \eta\right )$ where $\eta$ is the quadratic form $\diag (1,1,-1)$. We will adopt the convention that our coordinates are enumerated as $x^i =(x^1,x^2,x^3)=(x,y,z)$ with $\eta(\partial_x,\partial_x)=\eta(\partial_y,\partial_y)=+1$ so that $\partial_x$ and $\partial_y$ are \emph{spacelike} and $\eta(\partial_z,\partial_z)=-1$ so that $\partial_z$ is \emph{timelike} in the terminology common in physics.

The hyperboloid modelling ${\mathbb H}^2$ is a \emph{spacelike hypersurface} in ${\mathbb M}^3$; i.e., any vector tangent to this surface is spacelike. However, any vector $X$ from the origin to a point on ${\mathbb H}^2$ is timelike and future-directed ($\eta(X,\partial_z)<0$), and has $\eta(X,X)=-1$. In fact, any such vector is a future-directed timelike unit normal field for the surface.

The group $G$ that preserves the quadratic form $\eta$ is the orthogonal group $\Orth(2,1)$. The proper orthochronous subgroup $G$ that preserves spatial orientation and time orientation will preserve the hyperboloid sheet $z=\sqrt{1+x^2+y^2}$ and the orientation of bases for its tangent spaces (as well as preserving the choice of future and past). This subgroup lies in the connected component of the identity in the isometry group of ${\mathbb H}^2$. The one-parameter subgroups of $G$ can be classified as compositions of \emph{boosts} in the $x$-direction
\begin{equation}
\label{eq2.1}
A_1(\zeta)=\left [ \begin{array}{ccc} \cosh\zeta&0& -\sinh\zeta\\0&1&0\\ -\sinh\zeta &0&\cosh\zeta \end{array}\right ]\ ,
\end{equation}
boosts in the $y$-direction
\begin{equation}
\label{eq2.2}
A_2(\xi)= \left [ \begin{array}{ccc} 1&0&0\\0&\cosh\xi& -\sinh\xi\\ 0&-\sinh\xi &\cosh\xi \end{array}\right ]\ ,
\end{equation}
and rotations about the $z$-axis
\begin{equation}
\label{eq2.3}
A_3(\theta)=\left [ \begin{array}{ccc} \cos\theta &-\sin\theta&0\\ \sin\theta&\cos\theta & 0\\ 0&0&1 \end{array}\right ] \ .
\end{equation}
The corresponding Lie algebra is spanned by
\begin{equation}
\label{eq2.4}
A_1'(0)=\left [ \begin{array}{ccc} 0&0&-1\\ 0&0&0\\ -1&0&0\end{array} \right ]\ ,\ A_2'(0)=\left [ \begin{array}{ccc} 0&0&0\\ 0&0&-1\\ 0&-1&0\end{array} \right ]\ ,\ A_3'(0)=\left [ \begin{array}{ccc} 0& -1&0\\1&0&0\\ 0&0&0 \end{array} \right ]\ .
\end{equation}

The boosts in the $x$-direction preserve $\partial_y$ and each of the two null planes in which it lies. Likewise, the boosts in the $y$-direction preserve $\partial_x$ and the two null planes in which it lies. Using this, one can see that a general $G$-transformation mapping any orthonormal basis of ${\mathbb M}^3$ to any other (preserving orientation and time orientation) can be constructed from a product $A_1(\zeta)A_2(\xi)A_3(\theta)$, whose parameters $\zeta$, $\xi$, and $\theta$ play the role of ``Euler angles''. We will sketch the argument. Consider two $\eta$-orthonormal basis (ONB) sets $\{ e_i \}$ and $\{{\tilde e}_i\}$, where $\{ e_i \}$ is the coordinate basis defined by the $x^i$ coordinates, with $e_3=\partial_z$ future-timelike and $\{ e_1, e_2\}$ right-handed (from here on, we simply say an \emph{oriented basis}). Likewise, ${\tilde e}_3$ will be future-timelike as well. The span of $\{ {\tilde e}_1,{\tilde e}_3\}$ is a timelike plane $\Pi$. It's a simple matter to find a (normalized spacelike) vector, call it $e_1'$, that lies in $\Pi$ and is orthogonal to $e_3$. It's also a simple exercise in linear algebra to find a rotation $A_3(\theta)$ about $e_3$ such that $A_3(\theta)e_1=e_1'$. This obviously leaves $e_3$ invariant, but maps $e_2$ to $e_2'=A_3(\theta)e_2$. Now apply a boost in the plane spanned by $e_2'$ and $e_3$. Such a boost $A_2(\xi)$ will leave $e_1'$ invariant, but we can choose $\xi$ such that $e_3':=A_2(\xi)e_3$, which remains timelike under a boost, lies in $\Pi$. This boost, incidentally, acts on $e_2'$ to produce $e_2'':=A_2(\xi)e_2'$. The plane $\Pi$ is now a coordinate plane for the ONB $\{ e_1',e_2'',e_3'\}$, with $\Pi= \Span \{ e_1', e_3'\}=\Span \{ {\tilde e}_1, {\tilde e}_3\}$, and $e_2''$ is normal (i.e., $\eta$-normal) to this plane. A final boost $A_1(\zeta)$ in the plane $\Pi$ preserves $e_2''$ and can be chosen such that ${\tilde e}_1=e_1'':=A_1(\zeta)e_1'$. Since $e_3'':=A_1(\zeta)e_3'$ must be orthogonal to $e_1''$, it follows that $\{ e_1'',e_2'',e_3''\}=\{ {\tilde e}_1, {\tilde e}_2, {\tilde e}_3\}$.

Then a curve of isometries may be written as
\begin{equation}
\label{eq2.5}
A(t)=A_1(\zeta(t))A_2(\xi(t))A_3(\theta(t))\ .
\end{equation}
If this curve passes through the identity isometry at $t=0$, we may write its tangent there as
\begin{equation}
\label{eq2.6}
\begin{split}
{\dot A}(0)=&\, {\dot A}_1(0){\dot \zeta}(0)+{\dot A}_2(0){\dot \xi}(0)+{\dot A}_3(0){\dot \theta}(0)\\
=&\, \left [ \begin{array}{ccc} 0&0&-1\\ 0&0&0\\ -1&0&0\end{array} \right ]{\dot \zeta}(0)+\left [ \begin{array}{ccc} 0&0&0\\ 0&0&-1\\ 0&-1&0\end{array} \right ]{\dot \xi}(0)+\left [ \begin{array}{ccc} 0& -1&0\\1&0&0\\ 0&0&0 \end{array} \right ]{\dot \theta}(0)\\
=&\, \left [ \begin{array}{ccc} 0& -{\dot \theta}(0)& -{\dot \xi}(0)\\ {\dot \theta}(0)&0&-{\dot \zeta}(0)\\ -{\dot \xi}(0)& -{\dot \zeta}(0)& 0 \end{array}\right ]\ .
\end{split}
\end{equation}
For any curve of isometries containing the identity at $t=0$, differentiation at $t=t_0$ can be accomplished using that $A(t_0+\epsilon)=A(t_0+\epsilon)A^{-1}(t_0)A(t_0)=:B(\epsilon)A(t_0)$ where $B(\epsilon):=A(t_0+\epsilon)A^{-1}(t_0)$ so $B(0)=\id$. But $B(\epsilon)$ can be written as a product $B(\epsilon)=A_1(\zeta(\epsilon))A_2(\xi(\epsilon))A_3(\theta(\epsilon))$, and so
\begin{equation}
\label{eq2.7}
{\dot A}(t_0)=\frac{d}{d\epsilon}\bigg \vert_{\epsilon=0}B(\epsilon)A(t_0) = {\dot B}(0) A(t_0)\ ,
\end{equation}
with ${\dot B}$ given by the right-hand side of \eqref{eq2.6}.

\subsection{Curves in the hyperboloid model} Given a hypersurface $S$ in a Riemannian manifold and vectors $U$ and $V$ tangent to it, a standard formula in Riemannian geometry gives
\begin{equation}
\label{eq2.8}
\nabla_U V = D_U V + K(U,V)\ ,
\end{equation}
where $\nabla$ is the connection on the ambient manifold, $D$ is the connection induced on the hypersurface, and $K$ is the vector-valued second fundamental form of the hypersurface. In the case of present interest, $\nabla$ is the Levi-Civita connection of the Minkowski metric $\eta=\langle\cdot , \cdot \rangle_{\eta}$. Since the vector $X$ from the origin to the unit hyperboloid is a unit future-timelike vector orthogonal to the hyperboloid, we may write a general vector $V$ as
\begin{equation}
\label{eq2.9}
V=P(V)-\langle X,V\rangle_{\eta} X\ ,
\end{equation}
where $P$ is orthogonal projection to the tangent space of the hyperboloid. The negative sign in the second term arises because $X$ is timelike. The second fundamental form can then be written as
\begin{equation}
\label{eq2.10}
K= P(\nabla X_{\flat})X\ ,
\end{equation}
where $X_{\flat}:=\langle \cdot , X\rangle_{\eta}$ is the 1-form metric-dual to $X$. If we write the first fundamental form of the hyperboloid as $h$ then the hyperboloid is umbilic in ${\mathbb M}^3$ such that $K=Xh$ (so that $\langle X, K(V,V)\rangle_{\eta} = -1$ for any unit vector $V$ tangent to the hyperboloid).

Now consider a (smooth) unit-speed curve $X(s)$ on the unit hyperboloid. The unit tangent vector is $T(s):=\frac{dX}{ds}$. It follows from the above formulas that
\begin{equation}
\label{eq2.11}
\nabla_T T = D_T T +K(T,T)=D_TT+X\ .
\end{equation}
Since $T$ is a unit vector, $\nabla_T T$ lies in its orthogonal complement, from which we see that $D_TT$ does as well. But $D_TT$ must be tangent to the hyperboloid and so orthogonal to $X$, so we write
\begin{equation}
\label{eq2.12}
D_TT = \kappa_g N\ ,
\end{equation}
where $N$ is called the \emph{principal normal vector} to the curve $s\mapsto X(s)$ and $\kappa_g$ is the \emph{geodesic curvature}. The sign choices are such that $\{ T,N,X\}$ is an orthonormal oriented basis oriented such that $X$ is future-pointing and the vector $\kappa_g N$ points to the concave side of the curve, viewed as a curve in the hyperboloid, whenever $\kappa_g$ is not zero. Now denote $\eta(\nabla_T T,\nabla_T T)=:\epsilon\kappa^2$ where $\epsilon=1$ if $\nabla_T T$ is spacelike, $0$ if it's null, and $-1$ if it's timelike. Then from \eqref{eq2.11} we can relate the curvature $\kappa$ of $X$ as a space curve in $({\mathbb M}^3,\eta)$ to its geodesic curvature $\kappa_g$ in the hyperboloid by
\begin{equation}
\label{eq2.13}
\kappa_g^2=\epsilon\kappa^2+1\ .
\end{equation}

Note that $\langle T, \nabla_T N\rangle_{\eta} = - \langle N,\nabla_T T\rangle_{\eta} = -\langle N,D_T T\rangle_{\eta} = -\kappa_g$. By similar reasoning, we see that $\langle X, \nabla_T N\rangle_{\eta} =0$, and of course since $N$ is a unit vector then $\langle N, \nabla_T N\rangle_{\eta} =0$. Thus $\nabla_T N = -\kappa_g T$. Collecting our results, we have that the $\{ T,N,X\}$ basis evolves along the curve according to the \emph{Frenet-Serret equations} in ${\mathbb H}^2$, which are
\begin{equation}
\label{eq2.14}
\begin{split}
\nabla_T T=&\, \kappa_g N +X\ ,\\
\nabla_T N=&\, -\kappa_g T\ ,\\
\nabla_T X=&\, T\ .
\end{split}
\end{equation}

\subsection{Curves and axes}
Here we follow closely the work of \cite{SdRT} for the ${\mathbb S}^2$ case, making changes as necessary. Choose an arbitrary vector ${\tilde v}\in {\mathbb M}^3\backslash \{0\}$, which can be either timelike, spacelike, or null. It is convenient to define ${\tilde v}=av$ where $a=\sqrt{|\eta({\tilde v},{\tilde v})|}$ when $v$ is not null. For presentation purposes, we will introduce the scale factor even when ${\tilde v}$ is null, but in that case $a\neq 0$ will not be determined and $v$, being null, cannot be normailzed. Then $\eta({\tilde v}, {\tilde v})=\epsilon = 0,\pm 1$ depending on whether $v$ is null ($0$), spacelike ($+1$), or timelike ($-1$).

Fix a unit speed curve $X:I\to {\mathbb M}^3$ on the unit hyperboloid, and define the functions
\begin{equation}
\label{eq2.15}
\begin{split}
\tau(s):=&\, \langle T(s),v\rangle_{\eta}\ , \\
\nu(s):=&\, \langle N(s),v\rangle_{\eta}\ , \\
\mu(s):=&\, \langle X(s), v\rangle_{\eta}\ .
\end{split}
\end{equation}
Then we can write
\begin{equation}
\label{eq2.16}
v= \tau(s) T(s) +\nu(s) N(s) -\mu(s)X(s)
\end{equation}
and
\begin{equation}
\label{eq2.17}
\epsilon=\eta(v,v)=\tau^2(s)+\nu^2(s)-\mu^2(s)\ .
\end{equation}

The next result shows that if the geodesic curvature of the curve $X(s)$ takes the form $\kappa_g(s)=a\tau(s)$, a choice corresponding to a flow by isometries under CSF as we will see in the next subsection, then $\tau$, $\nu$, and $\mu$ obey a certain autonomous system of differential equations along $X(s)$. The analogous result for curves in ${\mathbb S}^2$ is found in \cite[Proposition 3.1]{SdRT}.

\begin{proposition}\label{proposition2.1}
Along any smooth unit speed curve $X(s)$ on the hyperboloid we have that $\kappa_g(s)=a\tau(s)$ if and only if
\begin{equation}
\label{eq2.18}
\begin{cases} \tau'(s)=a\tau(s)\nu(s) +\mu(s),\\ \nu'(s)=-a\tau^2(s),\\ \mu'(s)= \tau(s).\end{cases}
\end{equation}

\end{proposition}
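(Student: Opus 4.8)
The plan is to prove this as a direct computation using the Frenet–Serret equations \eqref{eq2.14}, differentiating the defining relations \eqref{eq2.15}. The logical structure is an ``if and only if'': I would first show that $\kappa_g = a\tau$ forces the system \eqref{eq2.18}, then observe that the argument is reversible to get the converse.

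For the forward direction, I would simply differentiate each of $\tau$, $\nu$, $\mu$ with respect to arclength $s$ and substitute the Frenet–Serret relations. Since $v$ is a fixed vector in ${\mathbb M}^3$ (independent of $s$), we have $\nabla_T v = 0$, so $\tau'(s) = \langle \nabla_T T, v\rangle_{\eta}$, and likewise for $\nu'$ and $\mu'$. Using \eqref{eq2.14}:
\begin{align*}
\tau'(s) &= \langle \kappa_g N + X, v\rangle_{\eta} = \kappa_g \nu - \mu\cdot(-1)?\,,\\
\nu'(s) &= \langle -\kappa_g T, v\rangle_{\eta} = -\kappa_g \tau\,,\\
\mu'(s) &= \langle T, v\rangle_{\eta} = \tau\,.
\end{align*}
Here I must be careful with the timelike sign in the Minkowski inner product: since $\langle X, v\rangle_{\eta} = \mu$ \emph{with the sign already built into the definition} \eqref{eq2.15}, the term $\langle X, v\rangle_{\eta}$ is just $+\mu$, giving $\tau' = \kappa_g\nu + \mu$. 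Substituting the hypothesis $\kappa_g = a\tau$ into the first two lines then yields exactly \eqref{eq2.18}.

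The one genuine subtlety — the step I expect to be the main obstacle — is tracking the sign conventions correctly. The decomposition \eqref{eq2.9} and the orthonormality relations are governed by the Minkowski signature, where $\langle X, X\rangle_{\eta} = -1$ rather than $+1$. In particular, when I expand inner products I must remember that $\{T, N\}$ are spacelike (contributing $+1$) while $X$ is timelike (contributing $-1$), so the identity \eqref{eq2.17} carries a minus sign on $\mu^2$, and the expansion \eqref{eq2.16} carries a minus sign on the $X$-component. I would verify each derivative against these conventions, in particular double-checking that the inhomogeneous term $+\mu$ in the equation for $\tau'$ comes out with the correct sign; this is precisely the kind of place where a sign error (of the sort the acknowledgements mention being corrected) could creep in.

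For the converse, I would note that every equality above is an identity in $\kappa_g$ \emph{before} imposing the hypothesis. Specifically, for any smooth unit-speed curve the relations $\tau' = \kappa_g\nu + \mu$, $\nu' = -\kappa_g\tau$, and $\mu' = \tau$ hold unconditionally. Comparing these with the assumed system \eqref{eq2.18}, the equations for $\mu'$ agree automatically, while matching $\nu'$ gives $-\kappa_g\tau = -a\tau^2$, i.e. $(\kappa_g - a\tau)\tau = 0$, and matching $\tau'$ gives $\kappa_g\nu = a\tau\nu$, i.e. $(\kappa_g - a\tau)\nu = 0$. Since $\tau$ and $\nu$ cannot vanish simultaneously on an interval unless $\mu^2 = \epsilon$ degenerately (from \eqref{eq2.17}), a brief argument using continuity shows $\kappa_g = a\tau$ identically. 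I would phrase this carefully to avoid the edge case where $\tau \equiv \nu \equiv 0$, which would correspond to a degenerate curve rather than a generic soliton.
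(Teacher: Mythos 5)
Your overall approach is exactly the paper's: derive the unconditional relations $\tau'=\kappa_g\nu+\mu$, $\nu'=-\kappa_g\tau$, $\mu'=\tau$ from the Frenet--Serret equations \eqref{eq2.14}, substitute $\kappa_g=a\tau$ for the forward direction, and factor $(\kappa_g-a\tau)\tau=0$, $(\kappa_g-a\tau)\nu=0$ for the converse. Your sign bookkeeping for $\tau'=\kappa_g\nu+\mu$ is also correct as you finally state it.

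The one step where your justification does not go through is the claim that $\tau$ and $\nu$ cannot vanish simultaneously on an interval ``unless $\mu^2=\epsilon$ degenerately (from \eqref{eq2.17}).'' First, the sign is off: \eqref{eq2.17} gives $\mu^2=-\epsilon$ when $\tau=\nu=0$. More importantly, for timelike $v$ (where $\epsilon=-1$) this yields $\mu^2=1$, which is a perfectly consistent pointwise configuration, so the algebraic identity \eqref{eq2.17} alone rules nothing out in that case. The obstruction is geometric, and it is what the paper uses: $\tau=\nu=0$ means $v$ is orthogonal to both $T$ and $N$, hence by \eqref{eq2.16} $v=-\mu X$ is parallel to the position vector $X$. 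Since $v$ is a fixed nonzero vector of ${\mathbb M}^3$ and $X$ is a unit-speed (hence nonconstant) curve, $X(s)$ can be parallel to $v$ only at isolated values of $s$; therefore $\kappa_g-a\tau$ vanishes on a dense set, and by continuity everywhere. You should replace your appeal to \eqref{eq2.17} with this argument to close the converse.
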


\begin{proof}
Equations \eqref{eq2.14} imply that
\begin{equation}
\label{eq2.19}
\begin{split}
\tau'(s)=&\, \kappa_g(s)\nu(s)+\mu(s)\ ,\\
\nu'(s)=&\, -\kappa_g(s)\tau(s)\ , \\
\mu'(s)=&\, \tau(s)\ .
\end{split}
\end{equation}

To prove the forward implication, simply substitute $\kappa_g(s) = a\tau(s)$ into \eqref{eq2.19}.

To prove the reverse implication, note that we can combine equations \eqref{eq2.18} and \eqref{eq2.19} to write that $\left (a\tau-\kappa_g\right )\nu=0$ and $\left (a\tau-\kappa_g\right )\tau=0$. But if $\tau$ and $\nu$ both vanish, then $v$ must be orthogonal to both $T$ and $N$ and hence parallel to $X$, so $v$ is normal to the hyperboloid. But $v$ is a constant vector (i.e., it is parallel in $({\mathbb M}^3,\eta)$), so this can only happen at isolated points along a nontrivial curve $X$, so it must instead be that $a\tau-\kappa_g=0$.
\end{proof}

Next we show that solutions of the system \eqref{eq2.18} are always realized by actual curves.

\begin{proposition}\label{proposition2.2}
Given a solution $(\tau(s),\nu(s),\mu(s))$ of \eqref{eq2.18} obeying initial conditions $(\tau_0,\nu_0,\mu_0)=(\tau(0),\nu(0),\mu(0))$,
there is a smooth unit speed curve $X:I\to {\mathbb M}^3$ on the unit hyperboloid satisfying \eqref{eq2.15}.
\end{proposition}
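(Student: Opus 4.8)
The plan is to reverse the Frenet--Serret construction: treat the given function $\kappa_g(s):=a\tau(s)$ as a known smooth coefficient and read \eqref{eq2.14} as an ODE for the frame itself. Since $\nabla$ is the flat Minkowski connection, along a curve $\nabla_T$ is just $d/ds$ in the standard coordinates, so \eqref{eq2.14} becomes the \emph{linear} system $X'=T$, $T'=\kappa_g(s)N+X$, $N'=-\kappa_g(s)T$ for the nine components of $(X,T,N)\in{\mathbb M}^3\times{\mathbb M}^3\times{\mathbb M}^3$. Because this system is linear with coefficients smooth on $I$, any choice of initial frame $(X_0,T_0,N_0)$ extends to a unique solution on all of $I$; the curve is recovered as the $X$-component, and the remaining work is to choose the initial frame correctly and to check that the solution has all the geometric properties claimed.

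First I would construct the initial data. I seek an oriented orthonormal basis $(T_0,N_0,X_0)$ of $({\mathbb M}^3,\eta)$, with $X_0$ future-timelike and $T_0,N_0$ spacelike, satisfying $\langle T_0,v\rangle_\eta=\tau_0$, $\langle N_0,v\rangle_\eta=\nu_0$, $\langle X_0,v\rangle_\eta=\mu_0$; equivalently, expanding $v$ in this basis, $v=\tau_0T_0+\nu_0N_0-\mu_0X_0$. A necessary condition is $\tau_0^2+\nu_0^2-\mu_0^2=\langle v,v\rangle_\eta=\epsilon$, which is exactly \eqref{eq2.17}; note this quantity is conserved by \eqref{eq2.18} (its $s$-derivative vanishes identically), so it suffices to arrange it at $s=0$. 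Granting compatibility, the existence of such a frame follows from the transitivity of $G$ established in Section~2.1: the ``standard'' vector $v_{\mathrm{std}}:=\tau_0\partial_x+\nu_0\partial_y-\mu_0\partial_z$ has the prescribed inner products with the coordinate frame $(\partial_x,\partial_y,\partial_z)$ and the same norm $\epsilon$ as $v$, so there is $g\in G$ carrying $v_{\mathrm{std}}$ to $v$, whereupon one sets $(T_0,N_0,X_0):=(g\partial_x,g\partial_y,g\partial_z)$. I expect this to be the main obstacle, because transitivity must be applied on the correct orbit according to the causal type of $v$ (the connected spacelike hyperboloid, a single sheet of the timelike hyperboloid, or one nappe of the null cone), with the time-orientation of $v_{\mathrm{std}}$ matching that of $v$; the null case is the most delicate, since $a$ is undetermined and $v$ cannot be normalized, but $G$ still acts transitively on each nappe of the cone.

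Next I would verify that the frame structure is preserved by the linear flow. Writing out $d/ds$ of the six inner products $\langle X,X\rangle_\eta$, $\langle T,T\rangle_\eta$, $\langle N,N\rangle_\eta$, $\langle X,T\rangle_\eta$, $\langle X,N\rangle_\eta$, $\langle T,N\rangle_\eta$ using the system gives a closed linear ODE for these Gram functions, and a direct substitution shows the orthonormal values $(-1,1,1,0,0,0)$ are a constant solution. By uniqueness, an initially orthonormal frame stays orthonormal for all $s$. In particular $\langle X,X\rangle_\eta\equiv-1$, so $X(s)$ lies on the hyperboloid; since a unit timelike vector can never be null, $X(s)$ cannot leave the future sheet and so stays in ${\mathbb H}^2$. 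Moreover $X'=T$ with $\langle T,T\rangle_\eta\equiv1$ shows $X$ is a unit-speed curve, while $\langle X,T\rangle_\eta=\langle X,N\rangle_\eta=0$ shows $T,N$ are tangent to the hyperboloid; thus $(T,N,X)$ is genuinely the Frenet frame of $X$ with geodesic curvature $\kappa_g=a\tau$.

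Finally I would confirm \eqref{eq2.15}. Set $\hat\tau:=\langle T,v\rangle_\eta$, $\hat\nu:=\langle N,v\rangle_\eta$, $\hat\mu:=\langle X,v\rangle_\eta$; differentiating and using the frame ODE (with $\kappa_g=a\tau$) shows $\hat\tau'=a\tau\hat\nu+\hat\mu$, $\hat\nu'=-a\tau\hat\tau$, $\hat\mu'=\hat\tau$. But substituting $(\tau,\nu,\mu)$ into this same linear system reproduces exactly \eqref{eq2.18}, so $(\tau,\nu,\mu)$ is itself a solution; since the initial frame was chosen so that $(\hat\tau,\hat\nu,\hat\mu)(0)=(\tau_0,\nu_0,\mu_0)$, uniqueness for linear ODEs forces $(\hat\tau,\hat\nu,\hat\mu)\equiv(\tau,\nu,\mu)$ on $I$, which is precisely \eqref{eq2.15}. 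This completes the construction.
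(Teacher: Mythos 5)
Your proof is correct and follows essentially the same route as the paper: prescribe the curvature $k(s)=a\tau(s)$, integrate the Frenet system \eqref{eq2.14} to produce the curve, and identify $(\langle T,v\rangle_\eta,\langle N,v\rangle_\eta,\langle X,v\rangle_\eta)$ with the given solution by uniqueness for the linear ODE system. The only differences are matters of detail: the paper invokes the fundamental theorem for curves in ${\mathbb H}^2$ where you prove it directly via the Gram-matrix argument, and it sidesteps your transitivity/compatibility discussion by choosing an arbitrary oriented orthonormal frame first and \emph{defining} $v=\tau_0T_0+\nu_0N_0-\mu_0X_0$ from the initial data.
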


\begin{proof} Choose a point $X_0$ on the hyperboloid and two vectors $T_0$ and $N_0$ such that $\{ T_0, N_0, X_0\}$ is an oriented orthonormal frame. Then define $v$, normalized as above (if non-null), by $v=\tau_0T_0 +\nu_0N_0 -\mu_0X_0$. Now, given $\tau(s)$ for $s\in I$ and given $a>0$, define a function $k:I\to {\mathbb R}:s\mapsto a\tau(s)$. By the fundamental theorem for curves in ${\mathbb H}^2$, there is a unique curve $X:I\to {\mathbb M}^3$ such that $X(0)=X_0$, $X'(0)=:T(0)=T_0$, and $N(0)=N_0$, whose curvature is $k(s)$. This curve must satisfy equations \eqref{eq2.14} with $\kappa_g(s)=k(s)$. Then equations \eqref{eq2.15} hold for any constant vector $v$, and so hold for $v=\tau_0T_0+\nu_0N_0-\mu_0X_0$.
\end{proof}

\subsection{CSF and flow by isometries}
Having defined a moving basis along an arbitrary curve in the hyperboloid, we can now define the \emph{curve shortening flow} in the hyperboloid to be a flow $X_t(s)=X(t,s)$ of smooth curves $X_t(s)=\left ( x_t(s),y_t(s),z_t(s)\right )$ with principal normal $N_t$, for $t\in J \subset {\mathbb R}$ some connected interval about $t=0$, such that
\begin{equation}
\label{eq2.20}
\begin{split}
\left \langle \frac{\partial X_t}{\partial t}, N_t \right \rangle_{\eta} = &\, \kappa_g\ ,\\
\left \langle \frac{\partial X_t}{\partial t}, X_t \right \rangle_{\eta} = &\, 0\ ,\\
\left \vert X_0 \right \vert_{\eta}^2=x_0^2+y_0^2-z_0^2= &\, -1\ ,\ z_0\ge 1\ .
\end{split}
\end{equation}
The last two equations are equivalent to the conditions $\left \langle X_t,X_t \right \rangle_{\eta} = -1$ and $X_0=\left ( x_0,y_0,z_0\right )$.

We are interested in those solutions of \eqref{eq2.20} that are of the form
\begin{equation}
\label{eq2.21}
X_t=A(t)X_0\ , \ t\in J\ ,\ A(0)=\id\ ,
\end{equation}
where $A(t):=A_1(\zeta(t))A_2(\xi(t))A_3(\theta(t))$ with $\zeta(0)=\xi(0)=\theta(0)=0$, with the $A_i$ defined in equations \eqref{eq2.1}--\eqref{eq2.3}. Then $t\mapsto X_t$ is called a \emph{flow by isometries}. Motion by an isometry preserves the curvature, so $\kappa_g(s,t)=\kappa)g(s)$ (i.e., it is independent of $t$).

As we saw in Proposition \ref{proposition2.1}, if there is a (normalized or null) vector $v$ such that $a\langle T,v\rangle_{\eta} =\kappa_g$, then a system of three scalar equations governs the moving frame. The next result shows that if a curve shortening flow is also a flow by isometries, there is always such a $v$ which is constant with respect to the flow parameter $t$. For convenience, we work with the unrescaled vector ${\tilde v}=av$ ($a(s)$ is used for another purpose in the proof). We follow the proof of the analogous result for curves in ${\mathbb S}^2$ \cite[Theorem 2.2]{SdRT}.

\begin{proposition}\label{proposition2.3}
Let $X(s,t)=:X_t(s)$ be a smooth function of two variables such that $X_t(s)$ is a regular unit speed curve for each $t\in J$, lying in the hyperboloid $z=\sqrt{1+x^2+y^2}$ in ${\mathbb M}^3$ and evolving by isometries as in \eqref{eq2.21}. Then \eqref{eq2.20} holds if and only if there is a ${\tilde v}\in {\mathbb M}^3\backslash \{0\}$ such that
\begin{equation}
\label{eq2.22}
\left \langle T(s),{\tilde v}\right \rangle_{\eta} =\kappa_g(s)
\end{equation}
for all $s\in I$, where $T(s)=\frac{dX_{t_0}}{ds}=:X_{t_0}'(s)$ and $\kappa_g(s)$ is the geodesic curvature of $X_{t_0}(s)=:X(s)$.
\end{proposition}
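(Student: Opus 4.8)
The plan is to exploit the fact that, for a flow by isometries, the velocity at any fixed time is (the restriction to the curve of) a single Killing field of $\mathbb{M}^3$, namely a map $s\mapsto MX(s)$ for one fixed Lie-algebra element $M$ that does \emph{not} depend on $s$; the vector $\tilde v$ will be the ``axis'' of $M$. Concretely, fix $t_0$ and write $X(s)=X_{t_0}(s)$. By \eqref{eq2.7} the velocity is $\left.\frac{\partial X_t}{\partial t}\right|_{t_0}=\dot A(t_0)X_0=MX(s)$, where $M=\dot B(0)=\dot A(t_0)A(t_0)^{-1}$ is the constant matrix given by the right-hand side of \eqref{eq2.6} and lies in the Lie algebra of $G$ (the matrices with $M^{\top}\eta+\eta M=0$). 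The independence of $M$ from $s$ is the crucial point that lets a single constant $\tilde v$ work for every $s$.

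First I would dispose of the second and third equations of \eqref{eq2.20}. Since each $A(t)$ preserves the hyperboloid, $\langle X_t,X_t\rangle_\eta\equiv -1$; differentiating in $t$ gives $\langle \frac{\partial X_t}{\partial t},X_t\rangle_\eta=0$ automatically, so the second equation holds for \emph{every} flow by isometries, and the third is just the standing hyperboloid constraint on $X_0$. Thus the entire content of \eqref{eq2.20} reduces to the first equation, $\langle MX,N\rangle_\eta=\kappa_g$.

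The heart of the proof is to convert this left-hand side into $\langle T,\tilde v\rangle_\eta$. For this I would introduce the Minkowski cross product $\times_\eta$ defined by $\langle u\times_\eta w,z\rangle_\eta=\det(u,w,z)$ (the determinant of the matrix with columns $u,w,z$), and identify the Lie-algebra element $M$ with the unique vector $\tilde v$ such that $MX=-\tilde v\times_\eta X$; checking this on the generators \eqref{eq2.4} fixes the correspondence (for instance $A_3'(0)$ corresponds to $-\partial_z$). Because $\det A=1$ for $A\in G$, the product $\times_\eta$ is $G$-equivariant, so it may be evaluated in a standard adapted frame, where for the positively oriented orthonormal triple $\{T,N,X\}$ (with $X$ future-timelike) one computes $X\times_\eta N=-T$. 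Hence
\begin{equation*}
\langle MX,N\rangle_\eta=-\langle \tilde v\times_\eta X,N\rangle_\eta=-\det(\tilde v,X,N)=-\langle X\times_\eta N,\tilde v\rangle_\eta=\langle T,\tilde v\rangle_\eta .
\end{equation*}
With this identity both implications are immediate: it shows that the first equation of \eqref{eq2.20} is equivalent to \eqref{eq2.22} for precisely this $\tilde v$ (the axis of $M$), while the other two equations are automatic. For the forward direction one therefore takes $\tilde v$ to be this axis, which is nonzero for a nontrivial flow; for the converse, given any $\tilde v$ obeying \eqref{eq2.22} one realizes it as the axis of the one-parameter isometry family $A(t)=\exp(tM)$ through $X_0$ with $M$ dual to $\tilde v$, and the identity then yields $\langle MX,N\rangle_\eta=\langle T,\tilde v\rangle_\eta=\kappa_g$, i.e.\ \eqref{eq2.20}.

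The main obstacle I anticipate is entirely in the sign and orientation bookkeeping of the third step: making the Lorentzian triple product $\det(\tilde v,X,N)$ and the frame identity $X\times_\eta N=-T$ consistent with the paper's orientation convention (that $\{T,N,X\}$ is positively oriented with $X$ future-timelike and $\kappa_g N$ on the concave side), so that the final identity emerges as $+\langle T,\tilde v\rangle_\eta$ rather than its negative. Once that single sign is pinned down the argument is short, because the constancy of $M$ in $s$ and the automatic vanishing of $\langle \frac{\partial X_t}{\partial t},X_t\rangle_\eta$ do all the structural work.
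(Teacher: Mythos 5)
Your proposal is correct and follows the same skeleton as the paper's proof: both reduce everything to the identity $\langle \partial_t X_t, N\rangle_\eta = \langle T,\tilde v\rangle_\eta$ for the Killing velocity field $MX$, with $\tilde v$ the ``axis'' of the constant Lie-algebra element $M$ (the paper's $\tilde v=({\dot\zeta},-{\dot\xi},{\dot\theta})$ in \eqref{eq2.27}). Where you differ is in how that identity is obtained: the paper writes $N$ out in components (its formula \eqref{eq2.25} is exactly the Lorentzian cross product $T\times_\eta X$ in disguise) and grinds through the inner product using the constraints $x^2+y^2-z^2=-1$ and $xx'+yy'=zz'$, whereas you package the whole computation into the $G$-equivariant triple product $\det(\cdot,\cdot,\cdot)$ and a single frame identity. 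Your route is cleaner and makes the coordinate-independence manifest; the cost is exactly the sign/orientation bookkeeping you flag (note the paper itself is not fully consistent here: Section 2.2 declares $\{T,N,X\}$ oriented while the proof of the proposition uses $\{T,N,-X\}$, and these give opposite $N$'s, so you will need to commit to one convention to land on $+\langle T,\tilde v\rangle_\eta$). Two small points to tighten: (a) in the converse you should say explicitly that $M\tilde v=0$, hence $\exp(tM)\tilde v=\tilde v$, so the identity $\langle MX_t,N_t\rangle_\eta=\langle T_t,\tilde v\rangle_\eta=\langle T_0,\tilde v\rangle_\eta=\kappa_g$ persists for all $t$ and not just at $t=0$ (the paper handles this by checking $(A_i'(t))^T\eta A_i(t)$ is $t$-independent); (b) like the paper, your converse constructs a new flow $A(t)=\exp(tM)$ rather than verifying \eqref{eq2.20} for the arbitrary isometry family given in the hypothesis --- this matches the paper's intended reading of the statement, but is worth stating openly.
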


\begin{proof} Let $X_t(s)=A(t)X_0$, so that $X$ flows by isometries. Then at any $t=t_0\in J$, we compute
\begin{equation}
\label{eq2.23}
\frac{\partial}{\partial t}\bigg \vert_{t_0} X_t(s) = \left [ \begin{array}{ccc} 0& -{\dot \theta}& -{\dot \xi}\\ {\dot \theta}&0&-{\dot \zeta}\\ -{\dot \xi}& -{\dot \zeta}& 0 \end{array}\right ]\left [ \begin{array}{c} x_{t_0}(s)\\y_{t_0}(s)\\z_{t_0}(s) \end{array} \right ] = \left [ \begin{array}{c} -{\dot \theta}y_{t_0} -{\dot \xi}z_{t_0}\\ {\dot \theta}x_{t_0}-{\dot \zeta}z_{t_0}\\ -{\dot \xi}x_{t_0}-{\dot \zeta}y_{t_0} \end{array} \right ] \ ,
\end{equation}
using \eqref{eq2.7} and \eqref{eq2.6}. Let $T_{t_0}(s):=X_t'(s)\big \vert_{t=t_0}$ be the unit tangent vector to $X_{t_0}(s)$ and let $N_{t_0}(s)$ be the unit normal vector to $X_{t_0}(s)$ in the tangent space to the hyperboloid, defined so that $\{T_{t_0} ,N_{t_0},-X_{t_0}\}$ is an oriented orthonormal basis. Writing $N_{t_0}(s)=(a(s),b(s),c(s))$, we have
\begin{equation}
\label{eq2.24}
\begin{split}
\left \langle N_{t_0}, X_{t_0}\right \rangle_{\eta} =&\, a(s)x_{t_0}(s)+b(s)y_{t_0}(s)-c(s)z_{t_0}(s)=0\, \\
\left \langle N_{t_0}, T_{t_0}\right \rangle_{\eta} =&\, a(s)x_{t_0}'(s)+b(s)y_{t_0}'(s)-c(s)z_{t_0}'(s)=0\ ,
\end{split}
\end{equation}
so that at each $s$ along $X_{t_0}$ we have
\begin{equation}
\label{eq2.25}
N_{t_0}=(a(s),b(s),c(s))=\left ( y_{t_0}'z_{t_0}-y_{t_0}z_{t_0}',z_{t_0}'x_{t_0}-z_{t_0}x_{t_0}',-x_{t_0}'y_{t_0}+x_{t_0}y_{t_0}'\right )\ .
\end{equation}
It's not difficult to check that this vector has norm $1$ and has the correct sign. Then
\begin{equation}
\label{eq2.26}
\begin{split}
\left \langle N_{t_0}, \frac{\partial X_t}{\partial t}\bigg\vert_{t=t_0}\right \rangle_{\eta}=&\, \left [ \begin{array}{ccc}a(s)& b(s)& c(s) \end{array}\right ]\left [ \begin{array}{ccc}1& 0& 0\\0&1&0\\ 0&0&-1 \end{array}\right ]\left [ \begin{array}{c} -{\dot \theta}y -{\dot \xi}z\\ {\dot \theta}x-{\dot \zeta}z\\ -{\dot \xi}x-{\dot \zeta}y \end{array} \right ]\\
=&\, {\dot \zeta} \left ( xyy'-xzz'-x'y^2+x'z^2\right )+{\dot \xi} \left ( -xx'y+zz'y+x^2y'-z^2y'\right )\\
&\, +{\dot \theta}\left ( -xx'z-yy'z +x^2z'+y^2z'\right )
\end{split}
\end{equation}
where we've removed the $t_0$ subscripts on the right to lessen the clutter. Using that $x^2+y^2-z^2=-1$ and, therefore, that $xx'+yy'=zz'$, this last line simplifies and implies that, for any curve moving by isometries described by $A(\zeta(t),\xi(t),\theta(t))$, we have
\begin{equation}
\label{eq2.27}
\left \langle N_{t_0}, \frac{\partial X_t}{\partial t}\bigg\vert_{t=t_0}\right \rangle_{\eta} ={\dot \zeta}x'-{\dot \xi}y'-{\dot \theta}z'=\langle T_{t_0},{\tilde v}\rangle_{\eta}\ ,
\end{equation}
where ${\tilde v}:=\left ( {\dot \zeta}, -{\dot \xi},{\dot \theta}\right )$. (Note that ${\tilde v}$ can be either timelike, spacelike, or null.)

To prove necessity (``only if''), by \eqref{eq2.20} we have $\left \langle \frac{\partial X_{t_0}}{\partial t}, N_{t_0} \right \rangle_{\eta} = \kappa_{g}(s)$, so
\begin{equation}
\label{eq2.28}
\left \langle T_{t_0}(s),{\tilde v}\right \rangle_{\eta} \equiv \left \langle T(s),{\tilde v}\right \rangle_{\eta}=\kappa_g(s)\ .
\end{equation}

To prove sufficiency, we note that one can by direct computation prove that if $A_i(t)$ is any of the matrices \eqref{eq2.1}--\eqref{eq2.3} then in matrix notation $(A_i'(t))^T \eta A_i(t)=(A_i'(0))^T\eta A_0(0)$ where $M^T$ denotes the transpose of $M$ (note that $A_i(0)=\id$). For a flow of the form $X(s,t)=A_i(t)X(s)$, we have $N(s,t)=A_i(t)N(s)$, so
\begin{equation}
\label{eq2.29}
\begin{split}
\kappa_g(s,t)=&\, \left \langle \frac{\partial X}{\partial t}(s,t), N(s,t)\right \rangle= X^T(s) \left (A_i'(t)\right )^T\eta A_i(t) N(s) =X^T(s)(A_i'(0))^T\eta A_0(0) N(s)\\
=&\,\kappa_g(s,0)\ .
\end{split}
\end{equation}
Thus, any of the families of isometries listed in \eqref{eq2.1}--\eqref{eq2.3} produce curve shortening flows such that $\kappa_g(s)$ is independent of $t$. So if there is a vector $v$ and a unit speed curve $X(s)$ with curvature function $\kappa_g(s)$ such that $\langle X'(s),v\rangle=\kappa_g(s)$, then a flow by any of these families will be a curve shortening flow with $\kappa_g(s,t)=\kappa_g(s,0)\equiv\kappa_g(s)$.
\end{proof}

\begin{remark}
It is always possible to reduce \eqref{eq2.29} to the form $\frac{\partial X_t}{\partial t}=\kappa_gN_t$ by a reparametrization $u\mapsto\varphi(u,t)$ of the curve $X(u,t)$ (e.g., \cite[Proposition 1.1]{CZ}).
\end{remark}

\section{Examples}
\setcounter{equation}{0}


\subsection{Geodesics} These are solutions of \eqref{eq2.18} with $\tau\equiv 0$, so $\mu\equiv 0$ as well, and $\nu$ is constant. They are fixed points of \eqref{eq3.1}. Since $\mu=0$, geodesics are plane curves lying in the plane orthogonal to $v$.

We may view the system \eqref{eq2.18} as an autonomous third-order system
\begin{equation}
\label{eq3.1}
\Phi'= F_a(\Phi)\ ,
\end{equation}
where $\Phi=(\tau,\nu,\mu)$, $a>0$ is a constant, and $F_a:{\mathbb R}^3\to{\mathbb R}^3:(\tau,\nu,\mu) \mapsto (a\tau\nu+\mu,-a\tau^2,\tau)$. We now show that at fixed points, the differential of $F_a$ has real eigenvalues of all three signs (positive, negative, and zero), so geodesics are unstable fixed points but attract in one direction.

\begin{lemma}\label{lemma3.1}
For $F_a$ as above, then $F_a=0 \Leftrightarrow (\tau, \nu, \mu)=(0,\pm 1,0)=\pm e_2$, and the corresponding curves are geodesics. The eigenvalues of $dF_a\vert_{e_2}$ are
\begin{equation}
\label{eq3.2}
\lambda=\ 0,\ \frac{a+\sqrt{a^2+4}}{2}>0,\ \frac{a-\sqrt{a^2+4}}{2}<0\ .
\end{equation}
The eigenvalues of $dF_a\vert_{-e_2}$ are the negatives of the above eigenvalues.
\end{lemma}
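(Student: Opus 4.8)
The plan is to dispatch the three assertions in turn, since after the fixed-point computation they reduce to two short linear-algebra calculations. To locate the zeros of $F_a$ I would read the equation $F_a(\tau,\nu,\mu)=(a\tau\nu+\mu,\,-a\tau^2,\,\tau)=0$ componentwise: the third component forces $\tau=0$, the first then forces $\mu=0$, and the second is automatically satisfied. This pins down $\tau=\mu=0$ but leaves $\nu$ undetermined by $F_a$ alone. To cut down to $\nu=\pm1$ I would invoke the first integral $\tau^2+\nu^2-\mu^2=\epsilon$ of the system \eqref{eq2.18} --- equivalently the normalization \eqref{eq2.17}, whose constancy along solutions one verifies in one line by substituting \eqref{eq2.18} into $\tfrac{d}{ds}(\tau^2+\nu^2-\mu^2)$. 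At a zero of $F_a$ this reads $\nu^2=\epsilon$, so necessarily $\epsilon\ge0$: the timelike normalization $\epsilon=-1$ is impossible, the null case $\epsilon=0$ forces $v=0$ and is excluded by hypothesis, and the spacelike normalization $\epsilon=1$ yields exactly $\nu=\pm1$, i.e. $\pm e_2$. The converse is immediate by substitution, and since these points have $\tau\equiv0$, hence $\kappa_g=a\tau\equiv0$, the associated curves are geodesics as noted at the start of this section.

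For the eigenvalues at $e_2$ I would compute the differential
\[
dF_a=\begin{bmatrix} a\nu & a\tau & 1\\ -2a\tau & 0 & 0\\ 1 & 0 & 0\end{bmatrix},
\]
evaluate it at $(\tau,\nu,\mu)=(0,1,0)$, and expand $\det\!\left(dF_a|_{e_2}-\lambda\,\id\right)$ along the middle row, which contributes only the factor $-\lambda$. This gives the characteristic polynomial $-\lambda(\lambda^2-a\lambda-1)$, whose roots are $0$ and $\tfrac12\bigl(a\pm\sqrt{a^2+4}\bigr)$. Because $\sqrt{a^2+4}>a>0$, one nonzero root is positive and the other negative, which is precisely \eqref{eq3.2}.

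For $-e_2$ I would avoid recomputing by exploiting the linear involution $L:=\diag(1,-1,-1)$, which satisfies $F_a\circ L=-L\circ F_a$ (a three-line direct check, using that $L$ reverses the signs of $\nu$ and $\mu$). Differentiating this identity at $e_2$ and using $L e_2=-e_2$ together with $L^{-1}=L$ gives $dF_a|_{-e_2}=-L\,(dF_a|_{e_2})\,L^{-1}$, so $dF_a|_{-e_2}$ is similar to $-dF_a|_{e_2}$ and its eigenvalues are exactly the negatives of those at $e_2$; alternatively one checks directly that the characteristic polynomial there is $-\lambda(\lambda^2+a\lambda-1)$. The only step requiring real care is the first one: recognizing that $F_a^{-1}(0)$ is the entire $\nu$-axis, so that the constraint \eqref{eq2.17} --- with the admissible values $\epsilon\in\{0,\pm1\}$ and the exclusion $v\neq0$ --- is genuinely what reduces this line to the two points $\pm e_2$. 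Everything else is a finite, routine computation.
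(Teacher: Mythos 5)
Your proof is correct, and the eigenvalue computation at $e_2$ is exactly the paper's: same differential \eqref{eq3.3}, same characteristic polynomial $-\lambda(\lambda^2-a\lambda-1)$. You diverge from the paper in two small but worthwhile ways. First, the paper dismisses the fixed-point determination as ``obvious,'' whereas you correctly observe that $F_a^{-1}(0)$ is the entire $\nu$-axis and that the reduction to $\pm e_2$ genuinely requires the first integral \eqref{eq2.17} together with $v\neq 0$ (which also shows there are no fixed points at all when $v$ is timelike or null); this is a clarification rather than a deviation, and your identification of the corresponding curves as geodesics via $\kappa_g=a\tau\equiv 0$ matches the remark opening Section 3, while the paper instead argues geometrically that the curve is the intersection of a timelike plane through the origin with the hyperboloid. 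Second, for $-e_2$ the paper simply carries the sign through a single computation with $\pm$, obtaining $-\lambda(\lambda^2\mp a\lambda-1)=0$, while you conjugate by the involution $L=\diag(1,-1,-1)$ satisfying $F_a\circ L=-L\circ F_a$ to conclude $dF_a|_{-e_2}$ is similar to $-dF_a|_{e_2}$; both are valid, and your symmetry argument explains \emph{why} the spectra are negatives of each other rather than merely verifying it. No gaps.
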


Specifically, when $a=1$ and $(\tau, \nu, \mu)=(0,1,0)$ (respectively, $(\tau, \nu, \mu)=(0,1,0)$),  then $\lambda=0,\frac{1+\sqrt{5}}{2}, \frac{1-\sqrt{5}}{2}$ (respectively, $\lambda=0,\frac{-1+\sqrt{5}}{2}, \frac{-1-\sqrt{5}}{2}$).

\begin{proof}The fixed points at $\pm e_2$ are obvious. Then since $\mu=0$ and $\tau=0$, we have from \eqref{eq2.15} that $v$ is normal to a timelike plane containing $X$ and $T$. The curve $X(s)$ is then the intersection curve of this timelike plane, which contains the origin, and the hyperboloid. It is therefore a hyperbolic great circle, and thus a geodesic (this can also be seen from \eqref{eq2.22}).

The differential of $F_a$ is given by
\begin{equation}
\label{eq3.3}
dF_a=\begin{pmatrix}a\nu &a\tau&1  \\ -2a\tau & 0&0\\ 1&0&0 \end{pmatrix}\ .
\end{equation}
Hence, at $\pm e_2=(0,\pm 1,0)$ we have
\begin{equation}
\label{eq3.4}
dF_a\vert_{\pm e_2}=\begin{pmatrix} \pm a &0&1  \\ 0 & 0&0\\ 1&0&0 \end{pmatrix}\ .
\end{equation}
Therefore, $\lambda$ is an eigenvalue of $dF_a\vert_{\pm e_2}$ iff $\lambda^2(\pm a-\lambda)+\lambda=-\lambda\left ( \lambda^2 \mp a\lambda -1\right )=0$.
\end{proof}

\subsection{Horocycles}
In the Poincar\'e disk model, horocycles are Euclidean circles tangent to the boundary of the Poincar\'e disk at one point and otherwise lying within the disk. If the point of tangency is $(1,0)$ then the horocycle with centre $(\varpi,0)$ can be written in parametrized form as
\begin{equation}
\label{eq3.5}
u(\rho)= \varpi +(1-\varpi)\cos \frac{\rho}{(1-\varpi)}\quad  , \quad
v(\rho)= (1-\varpi)\sin \frac{\rho}{(1-\varpi)}\ ,
\end{equation}
where $\rho$ is a parameter. By inverting the stereographic projection $(u,v)=\left ( \frac{x}{1+z},\frac{y}{1+z}\right )$ where $z=\sqrt{1+x^2+y^2}$, we obtain a parametric description of this horocycle in the hyperboloidal model
\begin{equation}
\label{eq3.6}
x=\frac{\varpi^2 s^2 +2\varpi -1}{2\varpi (1-\varpi)}\quad  , \quad
y=s\quad  , \quad
z=\frac{\varpi^2 s^2 +1}{2\varpi (1-\varpi)}-1\ .
\end{equation}
Here $s$ is an arclength parameter. It is straightforward to compute the Frenet frame
\begin{equation}
\label{eq3.7}
\begin{split}
T=&\, \left ( \frac{\varpi s}{(1-\varpi)},1,
\frac{\varpi s}{(1-\varpi)}\right )\quad \, \quad
N= \left ( \frac{\varpi^2 s^2-2\varpi^2+2\varpi-1}{2\varpi (1-\varpi)}, s, \frac{\varpi^2 s^2 -2\varpi +1}{2\varpi (1-\varpi)}\right )\ ,\\
X=&\, \left ( \frac{\varpi^2 s^2 +2\varpi -1}{2\varpi (1-\varpi)}, s, \frac{\varpi^2 s^2 +1}{2\varpi (1-\varpi)}-1\right )\ .
\end{split}
\end{equation}
Choosing $v=(0,1,0)$ we obtain
\begin{equation}
\label{eq3.8}
\tau=1\ ,\ \nu=-\mu=-s\ .
\end{equation}
Entering these values into the system \eqref{eq2.18}, we see that the system is satisfied if $a=1$. Then the relation $\kappa_g=a\tau$ yields $\kappa_g=1$, as it must for horocycles.

As discussed by Grayson \cite{Grayson}, under the curve shortening flow horocycles remain horocycles, and so evolve self-similarly, with curvature $\kappa_g=1$ throughout the flow. In the next section, we will seek all solutions of the system \eqref{eq2.18} corresponding to self-similar evolutions with $v=(0,1,0)$. This will be the case of curves which evolve purely by a boost in ${\mathbb M}^3$.

\subsection{Hypercycles} In the Poincar\'e disk model, hypercycles are arcs of Euclidean circles that end when they meet the boundary of the Poincar\'e disk transversally. Under CSF they do not evolve by isometries, but do evolve under the composition of isometries and rescalings.

Applying a rotation if necessary, we can place the centre of the Euclidean circle along one of the axes, say at $(\varpi,0)$ where $\varpi>0$. Let the Euclidean radius of the circle be $c<1+\varpi$. We can parametrize the hypercycle as we would any such Euclidean circle, say as
\begin{equation}
\label{eq3.9}
u(t)=\varpi + c\cos\frac{t}{c}\quad ,\quad
v(t)= c\sin\frac{t}{c}\ ,
\end{equation}
with the domain of $t$ chosen so that $u^2+v^2< 1$.

In the hyperboloidal model,
by using stereographic projection to lft the above parametrized curve to the unit hyperboloid we obtain
\begin{equation}
\label{eq3.12}
\begin{split}
x(t)=&\, \frac{2\left ( \varpi + c\cos\frac{t}{c}\right )}{1-c^2-\varpi^2 -2c\varpi\cos\frac{t}{c}}\quad , \quad
y(t)= \frac{2c\sin\frac{t}{c}}{1-c^2-\varpi^2 -2c\varpi\cos\frac{t}{c}}\ ,\\
z(t)=&\, \frac{1+c^2+\varpi^2 +2c\varpi\cos\frac{t}{c}}{1-c^2-\varpi^2 -2c\varpi\cos\frac{t}{c}}\ .
\end{split}
\end{equation}
These curves are curves of intersection of the hyperboloid $z=\sqrt{1+x^2+y^2}$ in ${\mathbb M}^3$ with the planes
\begin{equation}
\label{eq3.13}
2\varpi \left ( x+\frac{1}{\varpi}\right ) +\left (c^2-\varpi^2-1\right ) (z+1)=0\ ,
\end{equation}
for $c+\varpi>1$. These planes are timelike.

In his seminal paper, Grayson computes the evolution of hypercycles in ${\mathbb H}^2$ under CSF \cite[p 76]{Grayson}. The curve remains a hypercycle, but the curvature evolves as
\begin{equation}
\label{eq3.14}
\kappa_g(s,t) =\kappa_g(t)= \frac{1}{\sqrt{1-Ae^{2t}}}\ .
\end{equation}

\section{Curve shortening flows evolving by isometries}
\setcounter{equation}{0}

\subsection{Properties of solutions} From the system of equations \eqref{eq2.18}, we can deduce the following lemmata. We exclude the case of $\mu$ identically zero. In this case, the curve $X(s)$ and the constant vector $v$ are orthogonal, and then $X$ is a geodesic. We also take $X(s)$ to be defined for all $s\in {\mathbb R}$. Note that by equations \eqref{eq2.15} and \eqref{eq2.22} $tau$, $\nu$, $\mu$, and $\kappa$ will be bounded on the intersection of $X$ with any compact subset of ${\mathbb H}^2$, so inextendible curves trapped in a bounded set have infinite arclength. For those that escape any bounded set, the distance from any chosen $p\in{\mathbb H}^2$ increases without bound, so the arclength parameter is unbounded in this case as well.

\begin{lemma}[Behaviour of $\mu$]\label{lemma4.1}
Let $(\tau(s),\nu(s),\mu(s))$ be a solution of the system \eqref{eq2.18} and let $s_0$ be a critical point of $\mu$. If $\mu$ is not identically zero then $s_0$ is the unique critical point of $\mu$. If $\mu(s_0)>0$ it is a global minimum. If $\mu(s_0)<0$ it is a global maximum. Furthermore, $\mu$ has at most one zero. If $\mu$ converges as $s\to\infty$, then in fact $\lim_{s\to \infty}\mu(s)=0$. Likewise, if $\mu$ converges as $s\to-\infty$, then $\lim_{s\to -\infty}\mu(s)=0$. Otherwise, $\mu$ diverges to $\pm\infty$.

\end{lemma}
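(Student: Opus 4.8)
The plan is to work entirely with the autonomous system \eqref{eq2.18}, exploiting that $\mu'=\tau$, so that the critical points of $\mu$ are exactly the zeros of $\tau$, and that $\mu''=\tau'=a\tau\nu+\mu$. First I would record the second-derivative identity at a critical point $s_0$: since $\tau(s_0)=0$,
\begin{equation*}
\mu''(s_0)=a\tau(s_0)\nu(s_0)+\mu(s_0)=\mu(s_0).
\end{equation*}
If $\mu(s_0)=0$ as well, then $(\tau,\nu,\mu)(s_0)=(0,\nu(s_0),0)$, which is a zero of $F_a$; by uniqueness of solutions to the autonomous system the trajectory is constant and $\mu\equiv0$, contradicting our standing assumption. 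Hence at any critical point $\mu(s_0)\neq0$, and the identity shows that $s_0$ is a strict local minimum when $\mu(s_0)>0$ and a strict local maximum when $\mu(s_0)<0$.

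For uniqueness and the global character, suppose $\mu(s_0)>0$. Near $s_0^+$ we have $\tau>0$ since $\tau(s_0)=0$ and $\tau'(s_0)=\mu(s_0)>0$. I claim $\tau>0$ on all of $(s_0,\infty)$. If not, let $s_1>s_0$ be the first subsequent zero of $\tau$; on $(s_0,s_1)$ we have $\mu'=\tau>0$, so $\mu(s_1)>\mu(s_0)>0$, whence $\tau'(s_1)=\mu(s_1)>0$. But $\tau>0$ on $(s_0,s_1)$ with $\tau(s_1)=0$ forces $\tau'(s_1)\le0$, a contradiction. The mirror argument on $(-\infty,s_0)$ gives $\tau<0$ there. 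Thus $s_0$ is the unique critical point and $\mu(s_0)$ is the global minimum. The case $\mu(s_0)<0$ then follows from the symmetry $(\tau,\nu,\mu)(s)\mapsto(\tau(-s),-\nu(-s),-\mu(-s))$, which maps solutions of \eqref{eq2.18} to solutions and negates $\mu$, turning the global minimum into a global maximum.

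The remaining assertions split on whether $\mu$ has a critical point. If it does, then by the above $\mu$ is bounded away from $0$ (its global extremum has a single sign), so it has no zero; if it does not, then $\tau=\mu'$ never vanishes, so $\mu$ is strictly monotone and has at most one zero --- in either case at most one zero. The absence of more than one critical point also makes $\mu$ eventually monotone as $s\to+\infty$ and as $s\to-\infty$, and a monotone function either converges to a finite limit or diverges to $\pm\infty$, which is the stated dichotomy.

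It remains to show that a finite limit must be $0$; this is the step I expect to be the main obstacle, since $\nu$ is not monotone and cannot be controlled directly. Suppose $\mu\to L$ as $s\to\infty$. Then $\mu$ is bounded, and the conserved quantity $\tau^2+\nu^2-\mu^2=\epsilon$ from \eqref{eq2.17} forces $\tau$ and $\nu$ to be bounded as well; consequently $\tau'=a\tau\nu+\mu$ is bounded, so $\tau$ is uniformly continuous. Since $\int_{s_0}^\infty\tau\,ds=L-\mu(s_0)$ is finite, Barbalat's lemma yields $\tau(s)\to0$. Passing to the limit in $\tau'=a\tau\nu+\mu$ and using $\tau\to0$ with $\nu$ bounded gives $\tau'\to L$; were $L\neq0$ this would force $\tau\to\pm\infty$, contradicting $\tau\to0$, so $L=0$. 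The same argument applies as $s\to-\infty$.
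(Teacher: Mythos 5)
Your proof is correct and follows essentially the same route as the paper's: the identity $\mu''(s_0)=\mu(s_0)$ at critical points, a monotonicity contradiction to rule out a second critical point, and the conserved quantity \eqref{eq2.17} together with $\tau'=a\tau\nu+\mu$ to force a finite limit of $\mu$ to be zero. Your two local refinements --- disposing of a critical point with $\mu(s_0)=0$ by observing that $(0,\nu(s_0),0)$ is a fixed point of the autonomous system (so ODE uniqueness gives $\mu\equiv0$), and invoking Barbalat's lemma to justify $\tau\to0$ where the paper only appeals to $\tau$ having a sign --- are both valid and in fact tighten two of the terser steps in the published argument.
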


\begin{proof}
We have from \eqref{eq2.18} that
\begin{equation}
\label{eq4.1}
\mu''(s)=\tau'(s)=a\tau(s)\nu(s)+\mu(s) = a\mu'(s)\nu(s)+\mu(s)\ .
\end{equation}
Now at a critical point we have $\mu'(s_0)=\tau(s_0)=0$. Since by \eqref{eq2.17} $\nu$ cannot diverge at $s_0$ then from \eqref{eq4.1} we see that $\mu''(s_0)=\mu(s_0)$. If $\mu(s_0)>0$ then the second derivative test implies that it must be a local minimum. Now say there is another critical point at $s_1$, and that no critical point lies between $s_0$ and $s_1$. Then necessarily $\mu(s_1)>\mu(s_0)>0$, so $\mu(s_1)$ is also a local minimum and so there must be a local maximum between these critical points, which is a contradiction. Hence $s_0$ is the unique critical point, and therefore a global minimum. The dual statement for the case of $\mu(s_0)<0$ now follows in precisely similar fashion. In either of these cases, $\mu $ has no zero.

If $\mu(s_0)=0$ but $\mu(s)$ is not identically zero, then either (i) $\mu(s)<0$ for $s_0<s<s_0+\delta$ (for some $\delta >0$) or (ii) $\mu(s)>0$ for $s_0<s<s_0+\delta$. In case (i), then by the above argument the next critical point at some $s_1>s_0$ must have $\mu''(s_1)<0$ and would therefore be a local maximum, a contradiction; in case (ii) it must have $\mu''(s_1)>0$, again a contradiction. So there is no critical point at $s_1>s_0$. A similar argument shows that no critical point at $s_1<s_0$ either. Therefore, if $\mu(s_0)=0$ then either $s_0$ is the unique critical point, or $\mu$ has no critical points at all in this case, and in either situation $s_0$ is then the unique zero of $\mu$.

Since $\mu$ has at most one critical point $s_0$, on the domain $s<s_0$ we may take $\mu$ to be monotonic and $\tau$ to have a sign. The same statements are true on the domain $s>s_0$, and for all $s$ if $\mu$ has no critical point. Then if $|\mu|$ does not diverge to $\infty$, $\mu$ will converge. By way of contradiction, assume that $\mu$ converges to a constant $c\neq 0$. Then from the third equation in \eqref{eq2.18} and recalling the $\tau$ has a sign, we have $\tau\to 0$. But then $\eta(v,v)=\tau^2+\nu^2-\mu^2$, so $\tau^2+\nu^2$ is bounded, which implies that $\nu^2$ cannot diverge to $\infty$. Then the first equation in \eqref{eq2.18} yields $\tau'\to c$. But then $\tau$ would diverge, a contradiction, unless $c= 0$.
\end{proof}

\begin{lemma}[Behaviour of $\nu$]\label{lemma4.2}
The value $s_0$ is a critical point of $\nu$ if and only if $s_0$ is a critical point of $\mu$, and then $s_0$ is a point of inflection for $\nu$, which is therefore monotonic. Either $\nu$ is bounded and thus converges or it diverges to $+\infty$ as $s\to -\infty$ or to $-\infty$ as $s\to +\infty$.
\end{lemma}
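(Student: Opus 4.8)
The plan is to extract everything from the system \eqref{eq2.18}, exploiting that $\nu$ obeys the very simple law $\nu'(s) = -a\tau^2(s)$.

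I would open with the equivalence of critical points. Since $a > 0$, the middle equation gives $\nu'(s_0) = 0 \iff \tau(s_0) = 0$, while the last equation gives $\mu'(s_0) = 0 \iff \tau(s_0) = 0$; hence $s_0$ is critical for $\nu$ exactly when it is critical for $\mu$, both amounting to $\tau(s_0) = 0$. Monotonicity is then almost immediate: $\nu' = -a\tau^2 \le 0$ everywhere, so $\nu$ is non-increasing. For a non-geodesic curve $\tau$ cannot vanish on an interval, since there $\mu' = \tau = 0$ and the first equation would force $\mu \equiv 0$; so the critical points are isolated, and Lemma \ref{lemma4.1} makes $s_0$ the unique one.

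The delicate point, and the step I expect to need the most care, is the inflection claim. Differentiating once more yields $\nu'' = -2a\tau\tau'$, so automatically $\nu''(s_0) = 0$; to see a genuine reversal of concavity I must rule out degeneracy, which comes down to showing $\mu(s_0) \ne 0$. I would argue this by contradiction: if $\mu(s_0) = 0$ then, together with $\tau(s_0) = 0$, the triple at $s_0$ is $(0,\nu(s_0),0)$, a fixed point of \eqref{eq3.1}; indeed the conserved constraint $\tau^2+\nu^2-\mu^2 = \eta(v,v) \in \{0,\pm 1\}$ of \eqref{eq2.17} forces $\nu(s_0) = \pm 1$, so this is $\pm e_2$ as in Lemma \ref{lemma3.1}, and uniqueness of solutions would make $X$ a geodesic, contrary to hypothesis. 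With $\mu(s_0) \ne 0$ secured, the first equation gives $\tau'(s_0) = \mu(s_0) \ne 0$, so $\tau$ has a simple zero at $s_0$ and changes sign, while $\tau'$ keeps a fixed sign nearby; hence $\nu'' = -2a\tau\tau'$ reverses sign at $s_0$, which is precisely an inflection point (consistent with, and reinforcing, the monotonicity just established).

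Finally, the convergence/divergence dichotomy is a formality once monotonicity is in hand: a non-increasing function on $\mathbb{R}$ is either bounded, in which case it converges to finite limits at both ends, or unbounded, in which case it must tend to $+\infty$ as $s \to -\infty$ and/or to $-\infty$ as $s \to +\infty$ --- exactly the alternatives stated.
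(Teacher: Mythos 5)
Your proof is correct and follows essentially the same route as the paper's: everything is read off from $\nu' = -a\tau^2 = -a(\mu')^2$, with monotonicity giving both the equivalence of critical points and the convergence/divergence dichotomy. You are in fact more careful than the paper on the inflection claim---the paper simply asserts it from monotonicity, whereas you verify $\mu(s_0)\neq 0$ (an argument the paper only deploys later, in the proof of Lemma \ref{lemma4.6}) so that $\tau$ has a simple zero and $\nu''=-2a\tau\tau'$ genuinely changes sign; the only small imprecision is that for null $v$ the constraint \eqref{eq2.17} forces $\nu(s_0)=0$ rather than $\pm 1$, which makes $v=0$ and yields the same contradiction by a slightly different route.
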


\begin{proof}
Since by \eqref{eq2.18} we have $\nu'=-a\tau^2=-a(\mu')^2$, then it is obvious that $s_0$ is a critical point of $\nu$ if and only if $s_0$ is a critical point of $\mu$, and that $\nu(s)$ is monotonic so any critical point is an inflection point. It also follows from monotonicity that if $\nu$ is bounded it converges as $s\to\pm\infty$, and since it decreases monotonically, if it is not bounded it diverges as claimed.
\end{proof}

Notice that we have both a monotone quantity $\nu$ and a conserved quantity $\eta(v,v)=\tau^2+\nu^2-\mu^2$.

\begin{lemma}[Linear growth]\label{lemma4.3}
Let $(\tau(s),\nu(s),\mu(s))$ be a solution of the system \eqref{eq2.18} such that $\mu$ diverges as $s\to\infty$. Then $\tau(s),\nu(s),\mu(s)\in {\mathcal O}(s)$; i.e., they are each bounded above in magnitude by $Cs$ for some constant $C>0$. This is also true as $s\to -\infty$.
\end{lemma}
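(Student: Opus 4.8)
The plan is to reduce everything to a single claim: that $\tau$ remains \emph{bounded} as $s\to\infty$. Indeed, the conserved quantity $\tau^2+\nu^2-\mu^2=\epsilon$ from \eqref{eq2.17} gives at once the elementary estimates $|\mu|\le |\tau|+|\nu|+1$ and $|\nu|\le\sqrt{\mu^2+1}\le|\mu|+1$. Since $\mu'=\tau$ by \eqref{eq2.18}, a bound $|\tau|\le C$ immediately yields $|\mu(s)|\le|\mu(0)|+C|s|\in\mathcal{O}(s)$ by integration, and then $\nu\in\mathcal{O}(s)$ follows from the displayed bound on $|\nu|$. Thus all three functions are $\mathcal{O}(s)$ the moment $\tau$ is controlled, and the whole content of the lemma is the boundedness of $\tau$.

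First I would pin down the behaviour of $\nu$. By Lemma \ref{lemma4.2}, $\nu$ is monotone decreasing, so as $s\to\infty$ it either converges or tends to $-\infty$. If it converged, then $\nu(s)-\nu(0)=-a\int_0^s\tau^2$ would be bounded, forcing $\int_0^\infty\tau^2\,ds<\infty$; but $\mu\to\pm\infty$ together with conservation gives $\tau^2=\mu^2-\nu^2+\epsilon\to\infty$, contradicting integrability. Hence $\nu\to-\infty$.

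The crux is the boundedness of $\tau$, and here the naive estimate is not enough: conservation alone gives only $|\mu'|=|\tau|\le\sqrt{\mu^2+1}$, whose integration permits \emph{exponential} growth of $\mu$. The improvement to linear growth must come from the damping term, so I would differentiate $\tau^2$ using \eqref{eq2.18},
\[
\left(\tau^2\right)'=2\tau\tau'=2a\nu\tau^2+2\mu\tau\ .
\]
The decisive step is to control the forcing \emph{linearly} rather than quadratically: $2\mu\tau\le 2|\mu|\,|\tau|\le 2\tau^2+2|\nu|\,|\tau|+2|\tau|$, using $|\mu|\le|\tau|+|\nu|+1$. (Bounding $2\mu\tau$ by $\mu^2+\tau^2$ instead would reintroduce the large term $\nu^2$ and yield only a quadratic bound.) Since $\nu<0$ for large $s$, the two leading terms combine as $2a\nu\tau^2+2|\nu|\,|\tau|=2|\nu|\,|\tau|\,(1-a|\tau|)$, which is \emph{negative} once $|\tau|>1/a$. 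Hence there is a threshold $\tau_*$ and an $s_*$ (chosen large enough that $|\nu(s)|$ dominates for $s\ge s_*$) such that $\left(\tau^2\right)'<0$ whenever $s\ge s_*$ and $|\tau(s)|\ge\tau_*$. A standard continuity/barrier argument then shows $\tau^2$ cannot rise above $\max\{\tau_*^2,\ \tau^2(s_*)\}$ for $s\ge s_*$: at the first escape one would find a point where $\tau^2$ exceeds this level yet is increasing, contradicting the sign of $\left(\tau^2\right)'$ there. Thus $\tau$ is bounded on $[0,\infty)$, and the conclusions for $\mu$ and $\nu$ follow as in the first paragraph.

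Finally, the case $s\to-\infty$ is entirely parallel. Now $\nu$ increasing backwards forces $\nu\to+\infty$, and running the identical computation for $\frac{d}{dr}\tau^2$ in the reversed variable $r=-s$ makes $-2a\nu\tau^2=-2a|\nu|\tau^2$ the dissipative term, so the same barrier argument applies. The main obstacle, and the only genuinely subtle point, is the crux paragraph: one must recognize that the \emph{linear} control of $\mu\tau$ (rather than the AM--GM/quadratic one) is precisely what lets the damping $2a\nu\tau^2$ dominate once $|\nu|\to\infty$, upgrading the a priori exponential bound to the asserted linear one.
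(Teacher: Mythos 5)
Your proof is correct, but it takes a genuinely different route from the paper's. The paper proves this lemma \emph{without} establishing boundedness of $\tau$: it applies Jensen's inequality to $\tau$, which together with $\mu'=\tau$ and $\nu'=-a\tau^2$ gives the a priori bound $\mu(s)\le\mu(s_0)+\sqrt{\tfrac1a(s-s_0)(\nu(s_0)-\nu(s))}$, and then substitutes this into the conservation law \eqref{eq2.17} to show that superlinear growth of $\tau$ or $\nu$ along any sequence $s_i\to\infty$ would violate the resulting inequality; linear growth of $\mu$ then follows from the Jensen bound. You instead prove the stronger statement that $\tau$ is \emph{bounded}, which is essentially the content of the paper's separate Lemma \ref{lemma4.4} (and your differential inequality for $(\tau^2)'$, with the linear control $|\mu|\le|\tau|+|\nu|+1$ from \eqref{eq2.17} feeding the damping term $2a\nu\tau^2$ with $\nu\to-\infty$, closely parallels the estimate $\tau'\le(a\nu+2)\tau-2\nu$ in that proof, though your use of $\tau^2$ neatly avoids the case split on the signs of $\tau$ and $\mu$). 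Your reduction is sound: once $|\tau|\le C$, integration gives $\mu\in\mathcal{O}(s)$ and $|\nu|\le\sqrt{\mu^2+1}$ gives $\nu\in\mathcal{O}(s)$, and your preliminary step ruling out bounded $\nu$ (via $\int\tau^2<\infty$ versus $\tau^2=\epsilon+\mu^2-\nu^2\to\infty$) is needed and correct. Two small points of care: the threshold $\tau_*$ must be chosen to absorb the leftover positive terms $2\tau^2+2|\tau|$ as well, which works only because the damping coefficient $a|\nu|\to\infty$ (your parenthetical about $|\nu|$ dominating covers this, but it is the load-bearing step); and your argument front-loads the harder barrier analysis, whereas the paper's Jensen argument is softer and keeps the two lemmas logically independent. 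As a trade-off, your route effectively subsumes Lemma \ref{lemma4.4} and could shorten the paper's overall exposition.
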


\begin{proof}
Since $x\mapsto x^2$ is a convex function, Jensen's inequality yields for $s>s_0$ that
\begin{equation}
\label{eq4.2}
\left ( \frac{1}{(s-s_0)}\int\limits_{s_0}^s \tau(r) dr \right )^2 \le \frac{1}{(s-s_0)}\int\limits_{s_0}^s\tau^2(r) dr\ .
\end{equation}
Since $\mu'=\tau$ and $\nu'=-a\tau^2$, the above inequality implies that
\begin{equation}
\label{eq4.3}
\begin{split}
&\, \frac{\left ( \mu(s)-\mu(s_0)\right )^2}{\left ( s-s_0\right )^2} \le -\frac{\left ( \nu(s)-\nu(s_0)\right )}{a\left ( s-s_0\right )}\\
\implies &\, \mu(s) \le \mu(s_0)+\sqrt{\frac{1}{a}\left ( s-s_0\right ) \left ( \nu(s_0)-\nu(s)\right )}\ .
\end{split}
\end{equation}
In the last line, we recall that $\nu$ is a decreasing function. Writing $\mu_0:=\mu(s_0)$ and $\nu_0:=\nu(s_0)$, then
\begin{equation}
\label{eq4.4}
\eta(v,v)
\ge \tau^2+\nu\left [\nu+\frac{(s-s_0)}{a}\right ] -2\mu_0 \sqrt{\frac{1}{a}\left ( s-s_0\right ) \left ( \nu_0-\nu\right )} -\frac{(s-s_0)}{a} \nu_0-\mu_0^2\ ,
\end{equation}
so that
\begin{equation}
\label{eq4.5}
\frac{\eta(v,v)+\mu_0^2}{(s-s_0)^2}+\frac{\nu_0}{a(s-s_0)} +2\mu_0 \sqrt{\frac{\left ( \nu_0-\nu\right )}{a(s-s_0)^3}}\ge \frac{\tau^2}{(s-s_0)^2}+\frac{\nu}{(s-s_0)}\left [\frac{\nu}{(s-s_0)}+\frac{1}{a}\right ]\ .
\end{equation}
If there is a sequence of values $s_i\to\pm\infty$ such that either $|\tau(s_i)|$ or $|\nu(s_i)|$ (or both) grows faster than linearly, then this inequality cannot hold. But then by \eqref{eq4.3}, $|\mu|$ also cannot grow faster than linearly.
\end{proof}

\begin{lemma}[Bounded curvature]\label{lemma4.4}
The curvature and $\tau$ are bounded.
\end{lemma}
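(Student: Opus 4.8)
The plan is to reduce the statement to a bound on $\tau$ --- since $\kappa_g=a\tau$ (and then $\kappa$ is controlled through \eqref{eq2.13}) --- and to expose a hidden dissipative structure in the system. First I would differentiate the first equation of \eqref{eq2.18} and substitute the other two, obtaining the closed second-order equation $\tau''=a\nu\tau'+\tau-a^2\tau^3$. The point is that this is the equation of a unit-mass particle with position $\tau$ in the double-well potential $V(\tau):=\tfrac{a^2}{4}\tau^4-\tfrac12\tau^2$ (whose minima at $\tau=\pm 1/a$ correspond, for $a=1$, to the horocyclic value $\tau\equiv 1$) subject to friction with coefficient $-a\nu$. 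Accordingly I introduce the energy $E(s):=\tfrac12(\tau'(s))^2+V(\tau(s))$ and compute from the ODE that $E'=\tau'\bigl(\tau''+a^2\tau^3-\tau\bigr)=a\nu(\tau')^2$, so that the sign of $E'$ is dictated entirely by the sign of $\nu$.

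Next I would use that $\nu$ is monotone non-increasing (since $\nu'=-a\tau^2\le 0$, as in Lemma \ref{lemma4.2}) to control the two ends separately. As $s\to+\infty$: if $\mu$ is bounded, then by the conservation law \eqref{eq2.17} we have $\tau^2=\epsilon+\mu^2-\nu^2\le\epsilon+\mu^2$ and $\tau$ is bounded outright; if instead $\mu$ diverges, I claim $\nu\to-\infty$. Indeed, by Lemma \ref{lemma4.2} the only alternative is that $\nu$ converges, but then \eqref{eq2.17} forces $\tau^2\to+\infty$, whence $\nu'=-a\tau^2\to-\infty$, contradicting convergence. With $\nu<0$ for large $s$ we get $E'\le 0$ there, so $E$ is eventually non-increasing and $V(\tau)\le E$ is bounded above; since $V(\tau)\to+\infty$ as $|\tau|\to\infty$, this bounds $\tau$ near $+\infty$. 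The behaviour as $s\to-\infty$ is the mirror image: either $\mu$ is bounded (conservation), or $\mu$ diverges and the same argument gives $\nu\to+\infty$, so $\nu>0$ and $E'\ge 0$ for very negative $s$, whence $E$ is bounded above as $s\to-\infty$ and again $\tau$ is bounded. Boundedness on any finite interval is immediate from smoothness of the solution, so $\tau$, and hence the curvature $\kappa_g=a\tau$, is bounded on all of $\mathbb{R}$.

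The main obstacle is recognizing this dissipative structure in the first place: a direct attempt from \eqref{eq2.17} fails because, when $\mu$ diverges, both $\mu^2$ and $\nu^2$ blow up and one must control only their difference, and the linear-growth bound of Lemma \ref{lemma4.3} is by itself too weak to exclude $\tau\to\infty$. The energy $E$ resolves this precisely because the friction coefficient $-a\nu$ has a definite sign at each end of the curve --- forced by the monotonicity of $\nu$ together with its divergence whenever $\mu$ diverges --- so that $E$ becomes a one-sided Lyapunov function at each end. The only delicate bookkeeping is the sign convention at $-\infty$, where $E'\ge 0$ must be read as boundedness of $E$ from above in the limit $s\to-\infty$.
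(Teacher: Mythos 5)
Your argument is correct, and it is genuinely different from the one in the paper. The paper works directly with the first-order equation for $\tau$: after fixing the signs of $\tau$, $\mu$, and $\nu$ on a ray $[s_1,\infty)$, it bounds $\mu$ by $\sqrt{\tau^2+\nu^2+|\eta(v,v)|}$ and then by $2(\tau-\nu)$, arriving at the differential inequality $\tau'<(a\nu+2)\tau-2\nu$; since $a\nu+2<0$ eventually, $\tau'$ becomes negative whenever $\tau$ exceeds the threshold $2\nu/(a\nu+2)$, which is controlled by monotonicity of $\nu$, yielding the explicit bound $\tau(s)\le\max\{\tau(s_1),\,2\nu(s_1)/(a\nu(s_1)+2)\}$. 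This requires a separate case for $\tau,\mu<0$ and a reflection argument ($s\mapsto -s$, $\mu\mapsto-\mu$, $\nu\mapsto-\nu$) for the other end. Your route instead closes the system into the single second-order equation $\tau''=a\nu\tau'+\tau-a^2\tau^3$ (which checks out: $\tau''=a\nu\tau'+a\tau\nu'+\mu'$) and exploits the Lyapunov function $E=\tfrac12(\tau')^2+\tfrac{a^2}{4}\tau^4-\tfrac12\tau^2$ with $E'=a\nu(\tau')^2$. What this buys is a cleaner and more unified argument: you need only the sign of $\nu$ at each end (not the signs of $\tau$ and $\mu$), the two ends are handled symmetrically by the sign of the friction term, and the minima of the potential at $\tau=\pm1/a$ foreshadow the asymptotics $\tau\to\pm1/a$ of Lemma \ref{lemma4.5}. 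Your preliminary dichotomy is also sound and, if anything, slightly more careful than the paper's: at each end either $\mu$ is bounded, in which case \eqref{eq2.17} gives $\tau^2\le\eta(v,v)+\mu^2$ outright, or $\mu$ diverges, in which case $\nu$ must diverge (else $\tau^2=\eta(v,v)+\mu^2-\nu^2\to\infty$ forces $\nu'=-a\tau^2\to-\infty$, contradicting convergence of the monotone $\nu$); the paper's proof only treats the divergent-$\nu$ branch explicitly and leaves the bounded-$\nu$ case to the later convergence lemma. What you give up is the explicit quantitative bound on $\tau$ that the paper's barrier argument produces, but for the purposes of Lemma \ref{lemma4.4} that is immaterial.
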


\begin{proof} Since $\kappa_g=a\tau$, then $\kappa_g$ is bounded iff $\tau$ is.

Consider first $s\to\infty$. If $\nu$ diverges, then necessarily $\nu\to -\infty$ as $s\to\infty$, so take $s_1$ such that $a\nu(s_1)+2<0$. Since $\nu$ is monotonic, then $a\nu(s)+2<0$ for all $s\ge s_1.$ Since $\mu$ has at most one critical point and therefore $\tau$ has at most one zero, by increasing $s_1$ if necessary we can take $\tau(s)>0$ and $\mu(s)>0$ for $s\ge s_1$. (We will deal with other cases at the end.)

Under these circumstances, we have from equations \eqref{eq2.18} and \eqref{eq2.17} that
\begin{equation}
\label{eq4.6}
\begin{split}
\tau'=&\, a\nu\tau +\mu=av\tau +\sqrt{\tau^2+\nu^2-\eta(v,v)}
\le  a\nu\tau +\sqrt{\tau^2+\nu^2+\left \vert \eta(v,v)\right \vert }\\
\le &\, a\nu\tau +2\sqrt{\tau^2+\nu^2}\ ,
\end{split}
\end{equation}
where we may have to increase $s_1$ so that the last inequality holds (indeed, we have $\tau'\le a\nu\tau +\sqrt{\tau^2+\nu^2}$ if $v$ is null or spacelike). Furthermore, since $\nu<0$ and $\tau>0$, then $\sqrt{\tau^2+\nu^2}<\sqrt{\tau^2+\nu^2-2\tau\nu}=\tau-\nu$, so we get from \eqref{eq4.6} that
\begin{equation}
\label{eq4.7}
\tau'<a\nu\tau+2\tau-2\nu=(a\nu+2)\tau-2\nu\ .
\end{equation}
But $a\nu+2<0$, and so whenever $\tau>\frac{2\nu}{a\nu+2}$ equation \eqref{eq4.7} will yield $\tau'<2\nu-2\nu=0$.
Rewriting
\begin{equation}
\label{eq4.8}
\frac{2\nu}{a\nu+2} =\frac{2}{a} -\frac{4}{a(a\nu(s)+2)}\le \frac{2}{a} -\frac{4}{a(a\nu(s_1)+2)}=\frac{2\nu(s_1)}{a\nu(s_1)+2}
\end{equation}
using the monotonicity of $\nu$, we therefore obtain for $s\ge s_1$ that
\begin{equation}
\label{eq4.9}
0<\tau(s)\le \max \left \{ \tau(s_1),\frac{2\nu(s_1)}{a\nu(s_1)+2}\right \}\ .
\end{equation}

Next we deal briefly with the case where $\tau(s_1)<0$ and $\mu(s_1)<0$. Then subsequently $\tau(s)<0$ and $\mu(s)<0$ for all $s>s_1$. As above, $\nu(s)<0$ and $\nu\to -\infty$. Repeating the calculations of equations \eqref{eq4.6} and \eqref{eq4.7} with the appropriate sign changes, we now have
\begin{equation}
\label{eq4.10}
\begin{split}
\tau'=&\, a\nu\tau-\sqrt{\tau^2+\nu^2-\eta(v,v)}
\ge a\nu\tau-\sqrt{\tau^2+\nu^2+\left \vert \eta(v,v)\right \vert }\\
\ge &\, a\nu\tau-2\sqrt{\tau^2+\nu^2 }
\ge a\nu\tau-2(\tau+\nu)\\
\ge &\, (a\nu-2)\tau\ ,
\end{split}
\end{equation}
where in the last line we dropped the term $-2\nu$ since $\nu(s)<0$. Using $a\nu(s)-2<0$ for $s\ge s_1$ when $s_1$ large enough, then we have that $\tau'>0$ whenever $\tau<\frac{1}{a\nu-2}<0$. Alternatively, we can simply observe that the logarithmic derivative of $\tau$ is negative, so the magnitude of $\tau$ decreases. Hence $\tau$ is bounded.

To treat the limit $s\to -\infty$, note that we can replace $s$ by $r:=-s$ in the derivatives in \eqref{eq2.18} and then treat the limit $r\to\infty$. If we also replace $\mu\mapsto -\mu$ and $\nu\mapsto -\nu$, we recover the same system as \eqref{eq2.18}. We will again obtain that $\tau$ is bounded as $r\to\infty$ and therefore as $s\to -\infty$.
\end{proof}

\begin{lemma}[Convergence]\label{lemma4.5}
If either $\mu$ or $\nu$ is bounded, then all three functions $\tau$, $\nu$, and $\mu$ converge, with $\mu\to 0$, $\tau\to 0$, and $\nu\to\pm\sqrt{\eta(v,v)}$. This can occur only when $v$ is achronal (i.e., spacelike or null). If $\mu$ and $\nu$ are not bounded, then $\tau\to\pm \frac{1}{a}$.
\end{lemma}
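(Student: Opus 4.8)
The plan is to first collapse the two hypotheses into a single clean dichotomy and then handle the bounded and unbounded cases by completely different tools: the conservation law \eqref{eq2.17} for the bounded case, and a comparison (barrier) argument for the unbounded case. Throughout I would argue at the end $s\to+\infty$ and transfer to $s\to-\infty$ via the reflection $s\mapsto -s$, $(\mu,\nu)\mapsto(-\mu,-\nu)$ that leaves the system \eqref{eq2.18} invariant and fixes $\tau$, exactly as in the proof of Lemma \ref{lemma4.4}. The key preliminary observation is that, since $\tau$ is bounded by Lemma \ref{lemma4.4}, the conserved quantity gives $\nu^2-\mu^2=\eta(v,v)-\tau^2$ bounded, so $\mu$ is bounded \emph{if and only if} $\nu$ is. Thus ``$\mu$ or $\nu$ bounded'' means both are bounded, and its negation means both diverge; this justifies treating the lemma as a genuine dichotomy.

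In the bounded case, $\mu$ has at most one critical point (Lemma \ref{lemma4.1}), hence is eventually monotone and, being bounded, converges; Lemma \ref{lemma4.1} then forces $\mu\to0$. Since $\nu$ is monotone (Lemma \ref{lemma4.2}) and bounded it converges to some $\nu_\infty$, and \eqref{eq2.17} gives $\tau^2\to\eta(v,v)-\nu_\infty^2=:L\ge0$. Because $\tau=\mu'$ has at most one zero it carries an eventual sign, so $\tau\to\pm\sqrt{L}$; if $L>0$ then $\mu'$ tends to a nonzero limit and $\mu$ diverges, contradicting boundedness. Hence $L=0$, giving $\tau\to0$ and $\nu_\infty^2=\eta(v,v)$, i.e.\ $\nu\to\pm\sqrt{\eta(v,v)}$. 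The relation $\nu_\infty^2=\eta(v,v)\ge0$ is incompatible with $v$ timelike, which is precisely the achronality claim.

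For the unbounded case I would use that $\nu\to-\infty$ (monotone and unbounded, Lemma \ref{lemma4.2}) while $\tau$ stays bounded. Writing $c(s):=\tau^2-\eta(v,v)$ (bounded), the conservation law gives $\mu=\pm\sqrt{\nu^2+c}$ with a fixed sign (Lemma \ref{lemma4.1}), and rationalizing shows $\mu+\nu\to0$ when $\mu\to+\infty$ and $\mu-\nu\to0$ when $\mu\to-\infty$. Substituting $\mu=-\nu+(\mu+\nu)$ into the first equation of \eqref{eq2.18} yields, in the first sub-case, $\tau'=\nu(a\tau-1)+(\mu+\nu)$, where the remainder tends to $0$ and $\nu\to-\infty$. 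The hard part, and the step I expect to be the real obstacle, is turning this into rigorous convergence. Fix $\epsilon>0$: on the region $\tau\ge\frac1a+\epsilon$ one has $a\tau-1\ge a\epsilon$, so $\nu(a\tau-1)\le a\epsilon\,\nu\to-\infty$ while $\mu+\nu\to0$, giving $\tau'<0$ for all $s$ beyond some $s_\epsilon$; hence $\tau$ cannot cross the level $\frac1a+\epsilon$ upward, and (since $\tau$ would otherwise decrease without bound) must eventually satisfy $\tau\le\frac1a+\epsilon$. Symmetrically, on $\tau\le\frac1a-\epsilon$ the same estimate forces $\tau'>0$, so eventually $\tau\ge\frac1a-\epsilon$.

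Letting $\epsilon\to0$ gives $\limsup\tau\le\frac1a\le\liminf\tau$, so $\tau\to\frac1a$; the sub-case $\mu\to-\infty$ is identical with $a\tau+1$ replacing $a\tau-1$ and gives $\tau\to-\frac1a$, so together $\tau\to\pm\frac1a$. The delicate points to verify carefully are that the inward sign of $\tau'$ at each barrier level is uniform for large $s$ (because the unbounded factor $\nu(a\tau-1)$ strictly dominates the bounded remainder $\mu+\nu$), and that once $\tau$ enters the band it cannot leave. Finally, the reflection symmetry transfers all of the above conclusions, in both the bounded and unbounded cases, to the end $s\to-\infty$.
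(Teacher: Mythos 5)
Your proof is correct, and while the bounded case tracks the paper's argument closely (your observation that \eqref{eq2.17} plus Lemma \ref{lemma4.4} makes boundedness of $\mu$ and of $\nu$ equivalent is implicit in the paper, and your explicit elimination of the possibility $L>0$ via ``$\mu'$ would have a nonzero limit'' fills in a step the paper leaves tacit), the unbounded case is handled by a genuinely different mechanism. Both arguments start from the same factorization $(\nu+\mu)(\nu-\mu)=\tau^2+\eta(v,v)$ to conclude that $\mu\pm\nu\to 0$, but the paper then identifies $\lim\tau$ by evaluating the first equation of \eqref{eq2.18} at the critical points of $\tau$, where $\tau(s_\alpha)=-\mu(s_\alpha)/(a\nu(s_\alpha))$, and passing to the limit along sequences of local maxima and minima realizing $\limsup\tau$ and $\liminf\tau$. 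Your barrier argument instead rewrites $\tau'=\nu(a\tau\mp 1)+o(1)$ and traps $\tau$ in shrinking bands about $\pm\frac{1}{a}$. The paper's route is shorter but quietly assumes that $\limsup\tau$ is attained along local maxima (and likewise for minima), which requires a separate word in the case where $\tau$ is eventually monotone and so has no extrema near infinity; your comparison argument covers that degenerate case automatically, since it never needs $\tau$ to oscillate. The price you pay is the need to verify, as you note, that the divergent term $\nu(a\tau\mp1)$ uniformly dominates the $o(1)$ remainder on each side of the barrier for all large $s$ --- which it does, since $|\nu|\to\infty$ while $\mu\pm\nu\to 0$ --- and that $\tau$ cannot remain above the upper barrier forever (it would then satisfy $\tau'\le a\epsilon\nu+1\to-\infty$ and be forced downward). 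Both proofs are sound; yours is slightly longer but more robust at the one point where the paper is terse.
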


\begin{proof}
If $\mu$ is bounded (and thus converges to zero by Lemma \ref{lemma4.1}), then from \eqref{eq2.17} it is clear that $\tau$ and $\nu$ are bounded. Since $\nu$ is monotonic, it must converge, and then by \eqref{eq2.17} so must $\tau$.

If instead $\nu$ is bounded, since it's monotonic it must converge, and then as argued in the proof in the previous lemma, we have $\tau\to 0$. Then by \eqref{eq2.17} $\mu$ converges as well. By Lemma \ref{lemma4.1}, then $\mu\to 0$. Hence \eqref{eq2.17} yields $\nu^2\to\eta(v,v)$, which requires that $\eta(v,v)\ge 0$ so $v$ must be achronal.

If instead neither $\mu$ nor $\nu$ are bounded, then write \eqref{eq2.17} as
\begin{equation}
\label{eq4.11}
(\nu+\mu)(\nu-\mu)=\tau^2+\eta(v,v)\ .
\end{equation}
Since $\tau$ is bounded, so is the product $(\nu+\mu)(\nu-\mu)$. But neither $\mu$ nor $\nu$ is bounded so one factor in this product diverges and so the other factor must converge to zero. Then $\lim_{s\to\infty} \frac{\mu}{\nu}=\pm 1$.

Because $\tau$ is bounded, $\limsup_{s\to\infty} \tau(s)$ and $\liminf_{s\to\infty} \tau(s)$ exist. Now $\limsup_{s\to\infty} \tau(s)=\lim_{\alpha\to\infty}\tau(s_{\alpha})$ where each $\tau(s_{\alpha})$ is a local maximum of $\tau$. At each local maximum, we have by the first equation in \eqref{eq2.18} that $\tau(s_{\alpha})=-\frac{\mu(s_{\alpha})}{a\nu(s_{\alpha})}$, so $\limsup_{s\to\infty} \tau = -\limsup_{s\to\infty} \frac{\mu(s_{\alpha})}{a\nu(s_{\alpha})}=\mp\frac{1}{a}$ using the result of the previous paragraph. But we can replace the local maxima $\tau(s_{\alpha})$ by local minima, say $\tau(s_{\beta})$ and obtain $\liminf_{s\to\infty} \tau =\mp\frac{1}{a}$ (with the same choice of sign). Hence $\lim_{s\to\infty} \tau = \mp \frac{1}{a}$.
\end{proof}

\subsection{Spacelike ${\tilde v}$} Each spacelike vector ${\tilde v}\in{\mathbb M}^3$ is contained in a $1$-parameter family of planes, exactly two of which are null. Boosts in the spacelike directions orthogonal to each such plane preserve the planes and preserve $\pm {\tilde v}$, but do not preserve any spacelike vector in these planes.

We can apply a constant boost to the standard basis of ${\mathbb M}^3$ so that an arbitrary spacelike ${\tilde v}$ has the form ${\tilde v}=({\tilde v}_1,{\tilde v}_2,0)$ in the transformed basis. A subsequent constant rotation of the basis brings $v$ to the form ${\tilde v}=a(0,1,0)$ for some $a>0$. As before, we will normalize and write $v={\tilde v}/a$. Then $v$ lies in the intersection of the null plane $\Span\{(0,1,0),(1,0,1)\}$ with the vertical $x=0$ plane, while $\mu(s)$ as defined by \eqref{eq2.15} is simply the $y$-coordinate of $X(s)$.  The curve of intersection of the $x=0$ plane and the unit hyperboloid is the geodesic in ${\mathbb M}^3$ whose trace is the graph of $z=\sqrt{1+y^2}$, $x=0$. The intersection of the unit hyperboloid and the plane $y=0$ orthogonal to $v$ is also a geodesic, which we denote by $\Gamma_v$. We will label the $y>0$ half-space (into which $v$ points) in ${\mathbb M}^3$ by $H_+$ and the $y<0$ half-space by $H_-$.

\begin{lemma}\label{lemma4.6}
Let $X(s)$ be a non-geodesic curve evolving by isometries under CSF with spacelike ${\tilde v}$ and let $v={\tilde v}/\sqrt{\eta({\tilde v},{\tilde v})}$. Then exactly one of the following holds:
\begin{itemize}
\item [(i)] $\mu(s)$ has exactly one critical point $s_0$, a minimum, $X(s)$ lies entirely in $H_+$, and $X(s)$ extends to infinity at both ends. See Figure \ref{figure1}.
\item [(ii)] $\mu(s)$ has exactly one critical point $s_0$, a maximum, $X(s)$ lies entirely in $H_-$,  and $X(s)$ extends to infinity at both ends. See Figure \ref{figure2}.
\item[(iii)] $\mu(s)$ has no critical point, and either converges to $\Gamma_v$ at one end while extending to infinity at the other or intersects $\Gamma_v$ at one point and extends to infinity at both ends. See Figures \ref{figure3} and \ref{figure4}.
\end{itemize}
\end{lemma}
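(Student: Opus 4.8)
The plan is to exploit the two structural facts highlighted just before the lemma: $\nu$ is monotone decreasing (Lemma \ref{lemma4.2}) and $\eta(v,v)=\tau^2+\nu^2-\mu^2$ is conserved with $\eta(v,v)=+1$ here since $v$ is spacelike and normalized. Combining these gives the master identity $\mu^2 = \tau^2 + \nu^2 - 1$, which will control the geometry throughout. First I would invoke Lemma \ref{lemma4.1} to split into the two regimes: either $\mu$ has a (unique) critical point $s_0$, or $\mu$ has none. The sign analysis of Lemma \ref{lemma4.1} already tells us that at a critical point $\mu''(s_0)=\mu(s_0)$, so a critical point with $\mu(s_0)>0$ is a global minimum (yielding case (i)) and one with $\mu(s_0)<0$ is a global maximum (case (ii)); I must still rule out the borderline $\mu(s_0)=0$ with a genuine critical point, but Lemma \ref{lemma4.1} shows that if $\mu(s_0)=0$ then $s_0$ is the unique \emph{zero} and the curve has no critical point with $\mu=0$ surviving as a min/max, so that possibility collapses into case (iii).

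For case (i), $\mu(s_0)>0$ forces $\mu>0$ for all $s$ (global min, no zeros), so $X(s)$ stays in $H_+$; symmetrically case (ii) keeps $X$ in $H_-$. That $X$ extends to infinity at both ends in these two cases follows from Lemma \ref{lemma4.1}: since $\mu$ is bounded away from $0$ it cannot converge (the only possible finite limit is $0$), so $|\mu|\to\infty$ at both ends, and by the arclength remarks preceding Lemma \ref{lemma4.1} the curve escapes every bounded set, hence is complete and unbounded. For case (iii), with $\mu$ monotone and having at most one zero, either $\mu$ never vanishes or it vanishes exactly once; the single zero is precisely where $X$ crosses the plane $y=0$, i.e.\ meets $\Gamma_v$, and since $\mu$ is then strictly monotone through that zero with $|\mu|\to\infty$ on at least one side, the curve hits $\Gamma_v$ transversally at one point and runs to infinity on both ends. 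If $\mu$ has no zero and no critical point it is monotone with a definite sign; using Lemma \ref{lemma4.5} I would identify the bounded end: if $\mu$ (equivalently $\nu$) is bounded at one end then $\mu\to 0$, $\tau\to 0$, $\nu\to\pm 1$, which by the master identity means $X$ asymptotes to the geodesic $\Gamma_v$ (where $\mu=0$), while the other end must be unbounded (it cannot also converge, else both ends converge to $\mu=0$ contradicting strict monotonicity with no zero and no critical point).

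The main obstacle I anticipate is the clean dichotomy \emph{within} case (iii) between ``converges to $\Gamma_v$ at one end'' and ``intersects $\Gamma_v$ at one point,'' and in particular excluding the degenerate possibility that $\mu$ is bounded (hence convergent to $0$) at \emph{both} ends. Ruling that out requires combining monotonicity of $\nu$ with the conserved quantity: if $\mu\to 0$ and $\tau\to 0$ at both ends then $\nu\to +1$ at $-\infty$ and $\nu\to -1$ at $+\infty$ by Lemma \ref{lemma4.5}, which is consistent with monotonicity, so the real work is to show the corresponding curve would be \emph{geodesic} (forcing $\tau\equiv 0$) and thus excluded by hypothesis — this is where I would need to argue that a non-geodesic soliton cannot have $\mu$ bounded at both ends, most likely by showing that boundedness of $\mu$ at both ends together with $\int \tau\,ds = \mu(\infty)-\mu(-\infty)$ finite and $\nu' = -a\tau^2$ forces a contradiction with $\nu$ spanning the full range $[-1,1]$ unless $\tau\equiv 0$. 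The remaining verifications (transversality at the crossing point, completeness, proper embeddedness inherited from the global frame construction of Proposition \ref{proposition2.2}) are routine once the trichotomy is pinned down.
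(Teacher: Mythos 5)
Your overall strategy is the paper's: use Lemma \ref{lemma4.1} to split into the cases where $\mu$ has a (necessarily unique) critical point versus none, let the sign of $\mu(s_0)$ dictate global min/max and hence confinement to $H_+$ or $H_-$, and use the fact that a convergent $\mu$ must converge to $0$ to get divergence at both ends in cases (i) and (ii) and the approach-to-$\Gamma_v$ alternative in case (iii). Your worry about $\mu$ converging to $0$ at both ends is a non-issue you already resolve in passing: a strictly monotone function cannot have the same limit at both ends, so the elaborate argument via $\nu$ spanning $[-1,1]$ is unnecessary (the paper does not need it either).

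The genuine gap is your treatment of the borderline case where $\mu$ has a critical point $s_0$ with $\mu(s_0)=0$, i.e.\ a critical point lying on $\Gamma_v$. You assert that Lemma \ref{lemma4.1} shows "the curve has no critical point with $\mu=0$ surviving as a min/max, so that possibility collapses into case (iii)." That is a misreading: Lemma \ref{lemma4.1} explicitly allows that when $\mu(s_0)=0$, either $s_0$ is the unique critical point \emph{or} $\mu$ has no critical points; it does not exclude the first alternative. A curve realizing that first alternative would fit none of (i), (ii), (iii), so your trichotomy is not established. The paper closes this hole with an ODE uniqueness argument you do not supply: $\mu(s_0)=\mu'(s_0)=0$ forces $\tau(s_0)=0$ and, by the constraint \eqref{eq2.17}, $\nu(s_0)=\pm 1$; these are exactly the fixed points $\pm e_2$ of the autonomous system \eqref{eq3.1} (Lemma \ref{lemma3.1}), so by uniqueness of solutions the whole trajectory is constant, $\tau\equiv 0$, hence $\kappa_g\equiv 0$ and the curve is a geodesic, contradicting the hypothesis. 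You need this step (or an equivalent one) to make the case analysis exhaustive.
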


\begin{proof} If $\mu$ has a critical point in $H_+$, by Lemma \ref{lemma4.1} it is a global minimum. Then $\mu(s)\ge \mu(s_0)$ for all $s$, so the curve must remain in $H_+$. Likewise if $\mu$ has a critical point in $H_-$, is it a global maximum, and so the curve must remain in $H_-$. In either case, $\mu$ is bounded away from zero so by Lemma \ref{lemma4.1} $\mu\to \infty$ at both ends $s\to\pm\infty$ if the curve is in $H_+$, and $\mu\to -\infty$ at both ends $s\to\pm\infty$ if the curve is in $H_-$.

If a critical point $s_0$ of $\mu$ were to lie on $\Gamma_v$, then $\mu(s_0)=\mu'(s_0)=0$. Then $\tau(s_0)=0$ and $\nu(s_0)=\pm 1$. (We can further deduce that $\tau'(s_0)=0$ and $\mu''(s_0)=0$.) For these initial data, the unique maximal solution of the system \eqref{eq2.18} has $\tau(s)=0$ for all $s$, and thus $\kappa_g(s)=a\tau(s)=0$ for all $s$, so the corresponding curve is a geodesic, contrary to assumption. This possibility therefore does not occur.

If a curve has no critical point of $\mu$ then $\mu$ is monotonic so $\mu$ has at most one zero. Then the curve meets $\Gamma_v$ in at most one point. If it does not meet $\Gamma_v$, then it must approach $\Gamma_v$ in the limit, since it cannot have a limit for $\mu$ other than $\mu\to 0$.
\end{proof}

\begin{figure}[!ht]
\begin{center}
   \resizebox{2cm}{!}{\includegraphics{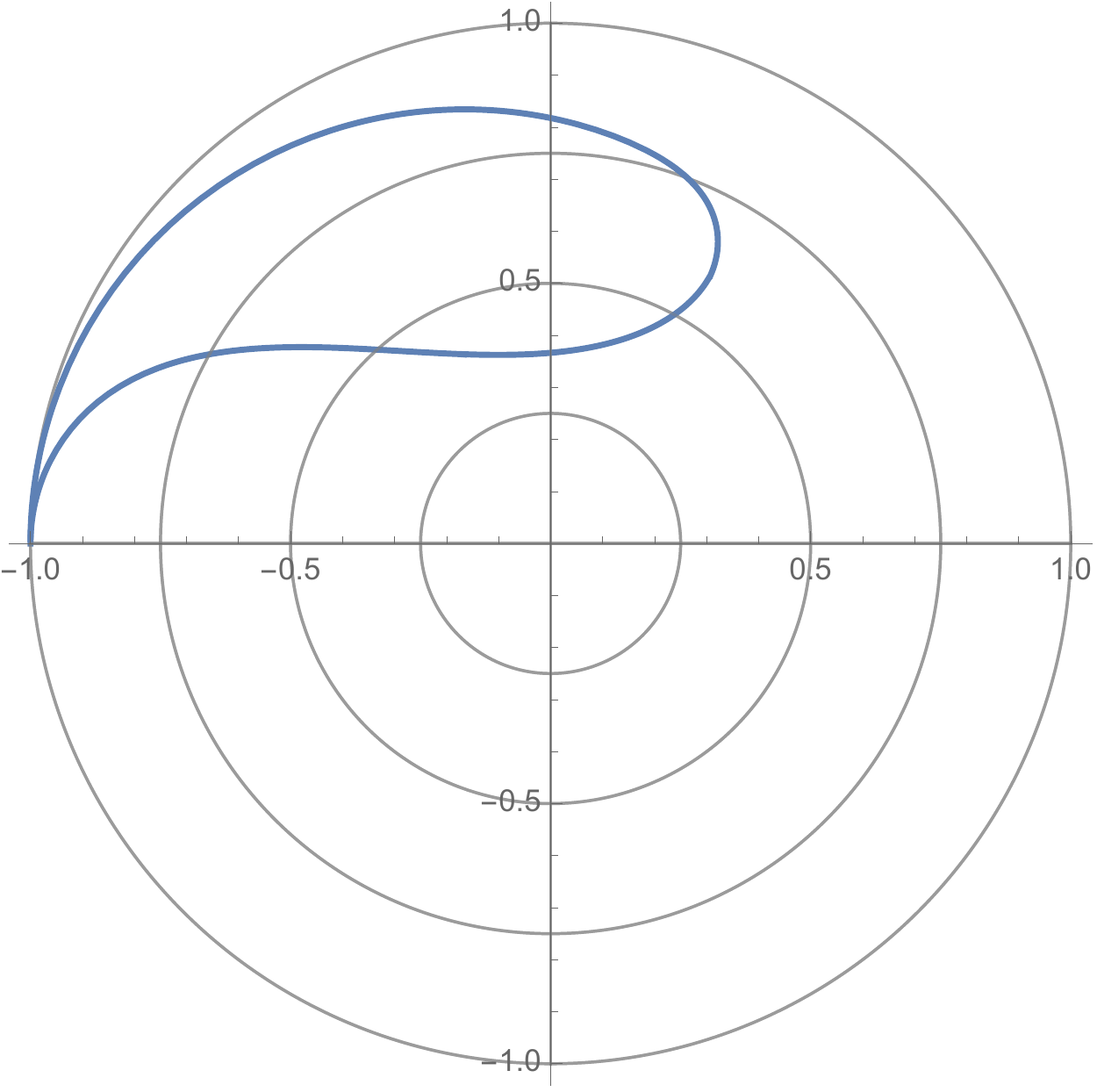}}
   \resizebox{2cm}{!}{\includegraphics{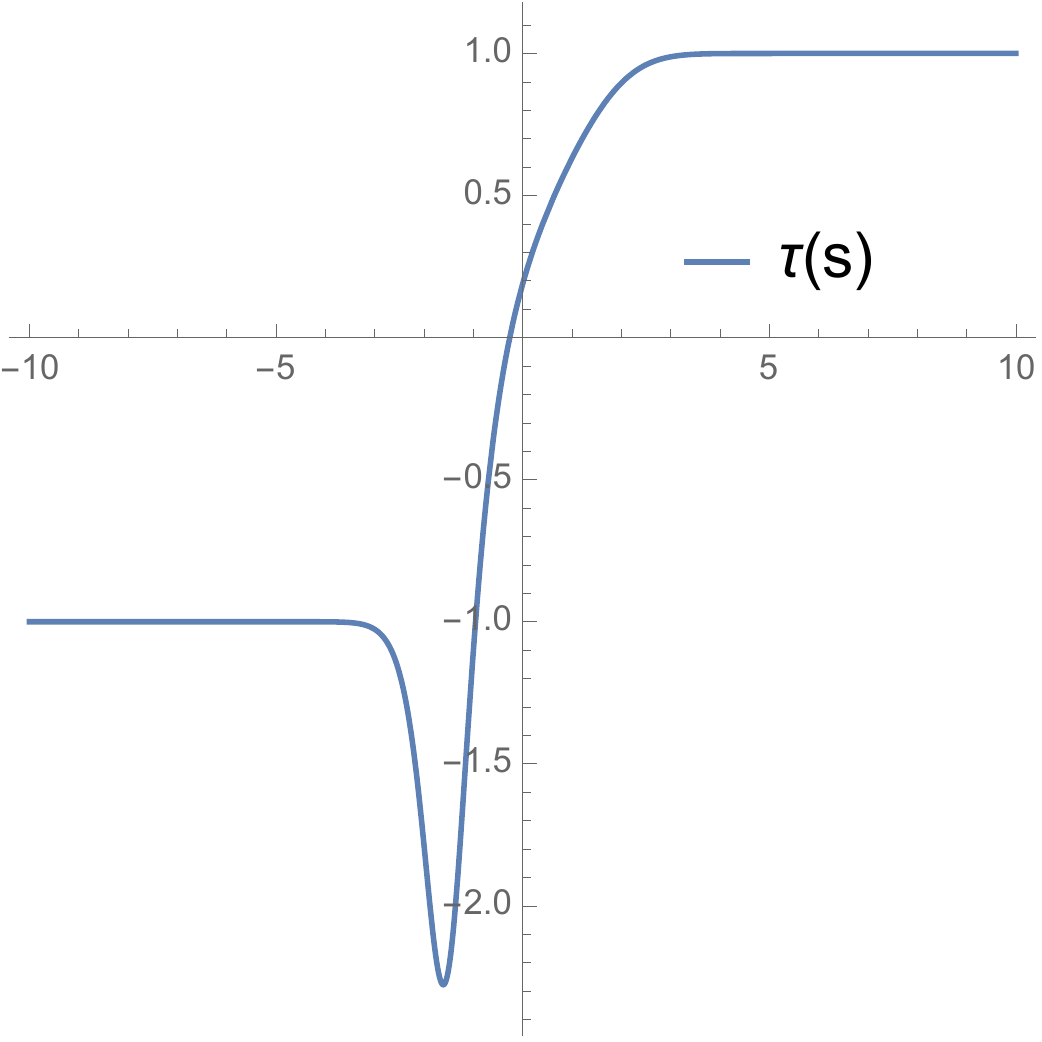}}
   \resizebox{2cm}{!}{\includegraphics{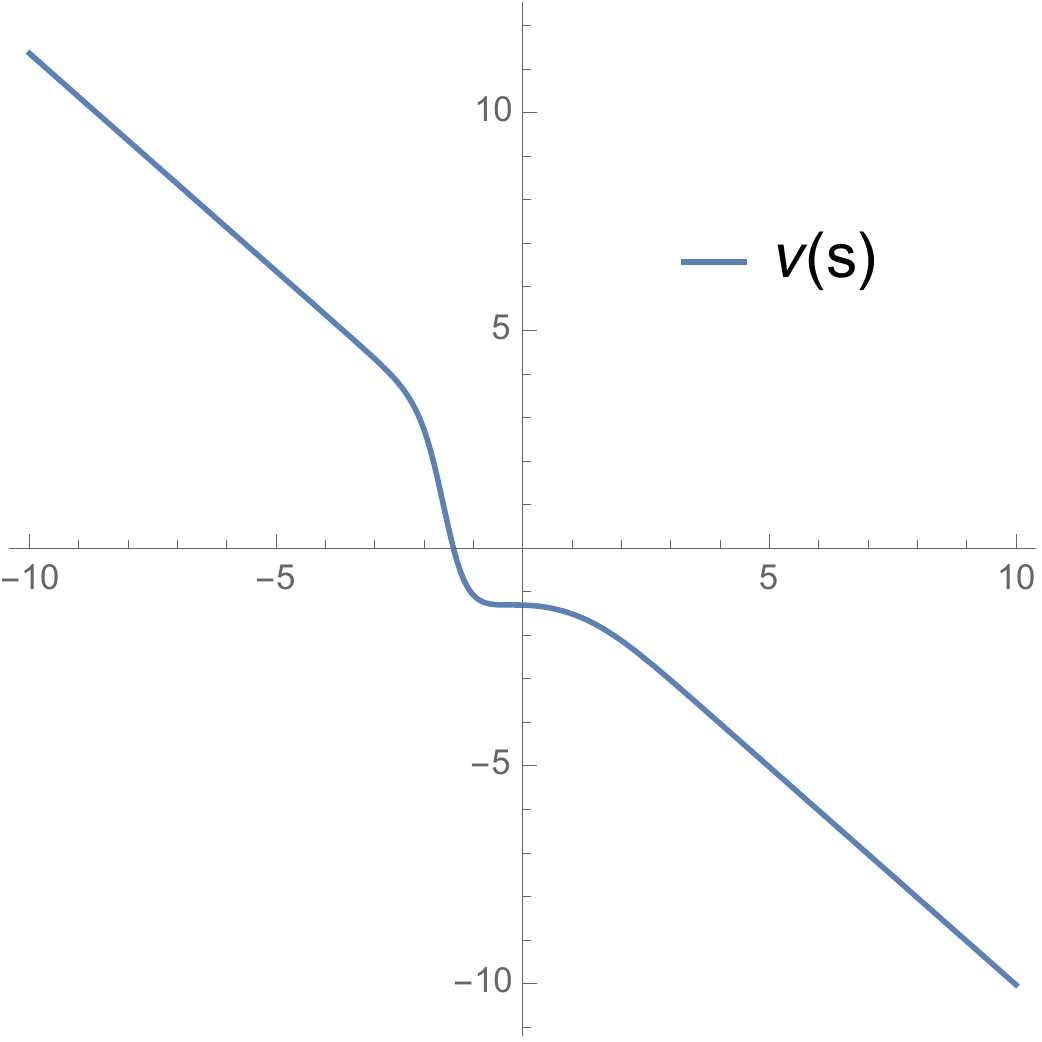}}
   \resizebox{2cm}{!}{\includegraphics{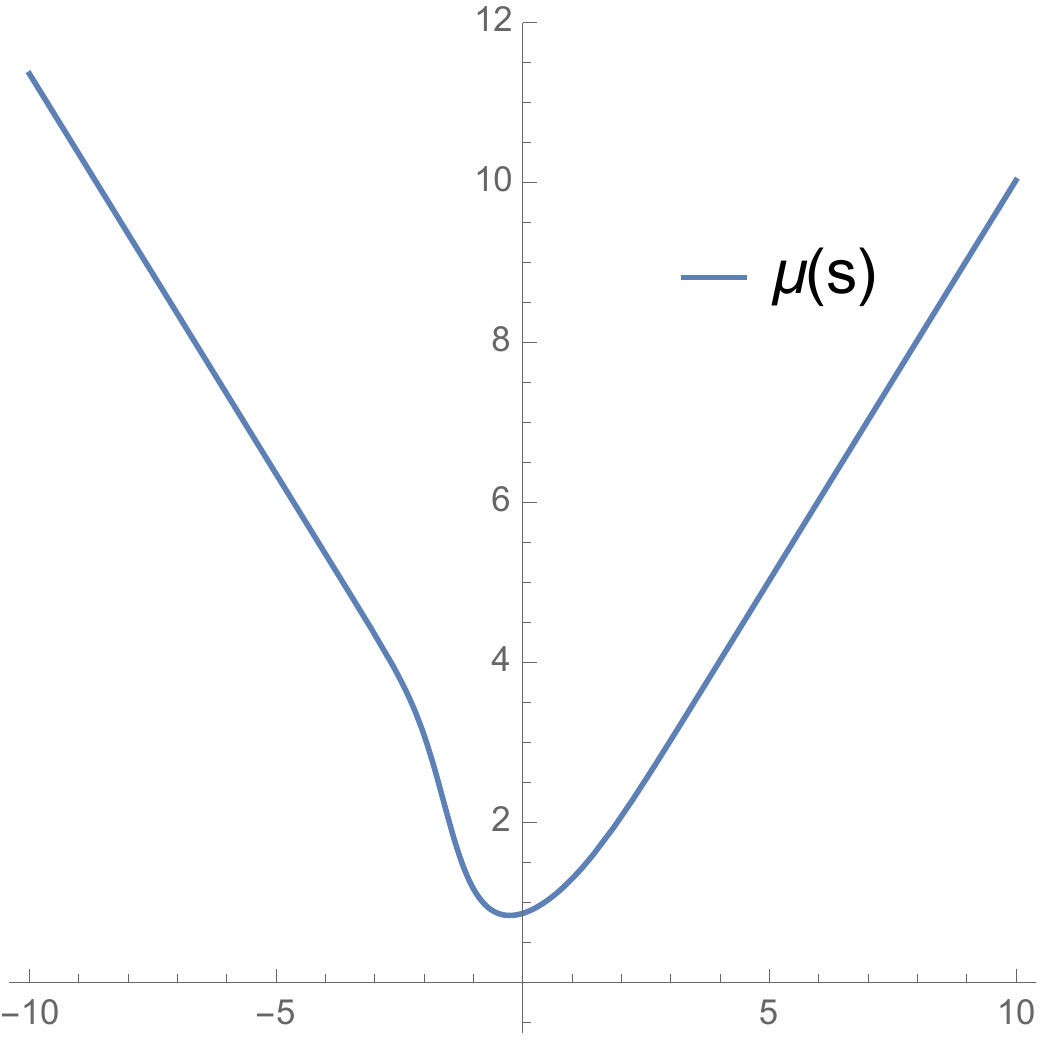}}
   \caption{Curve in the Poincar\'e disk model, evolving by isometry with spacelike ${\tilde v}$, as in Lemma \ref{lemma4.6}.(i). The valley in the graph of $\tau$ indicates the maximum of $|\kappa_g|$.}\label{figure1}
  \end{center}
\end{figure}

\begin{figure}[!ht]
\begin{center}
   \resizebox{2cm}{!}{\includegraphics{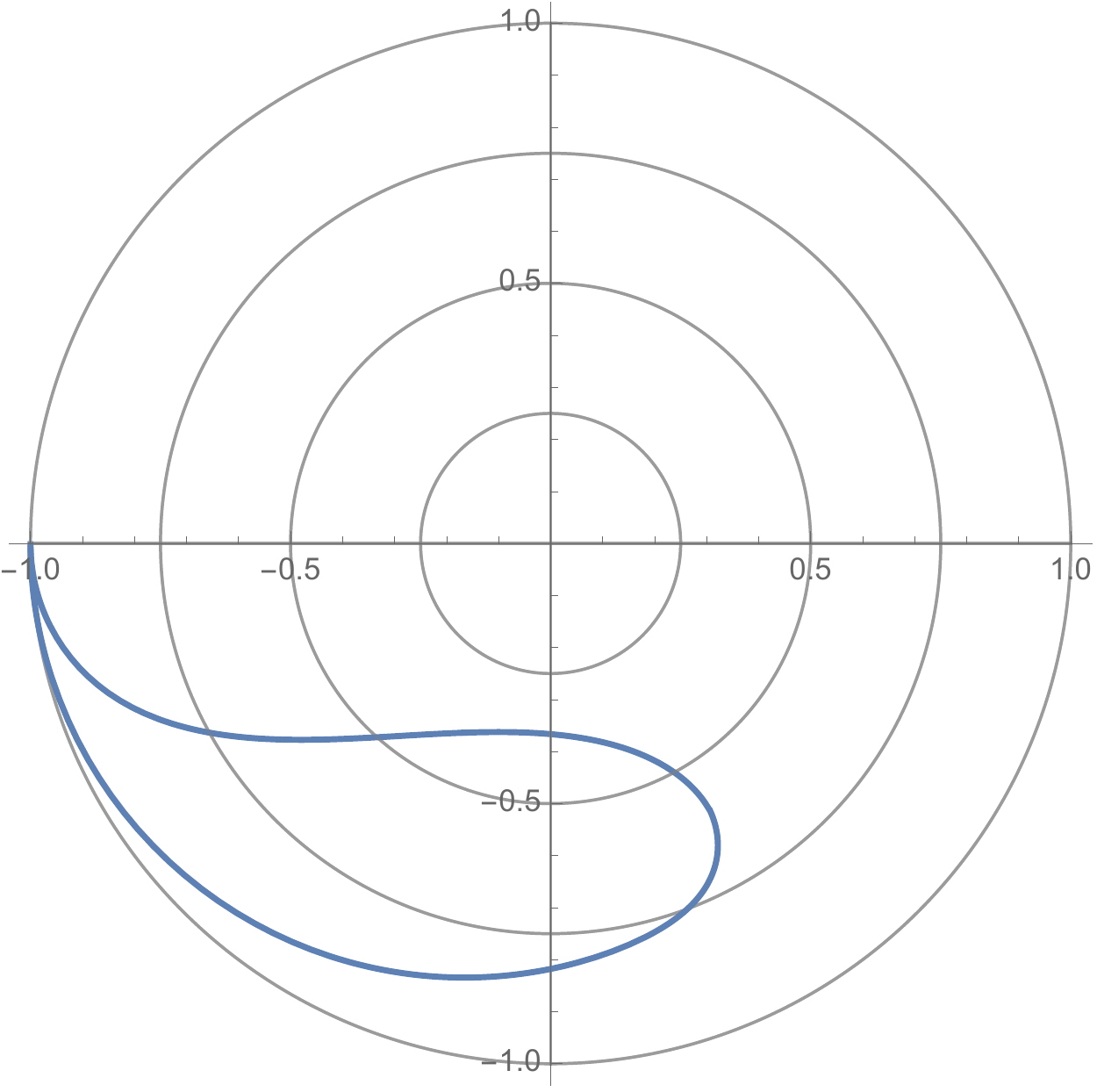}}
   \resizebox{2cm}{!}{\includegraphics{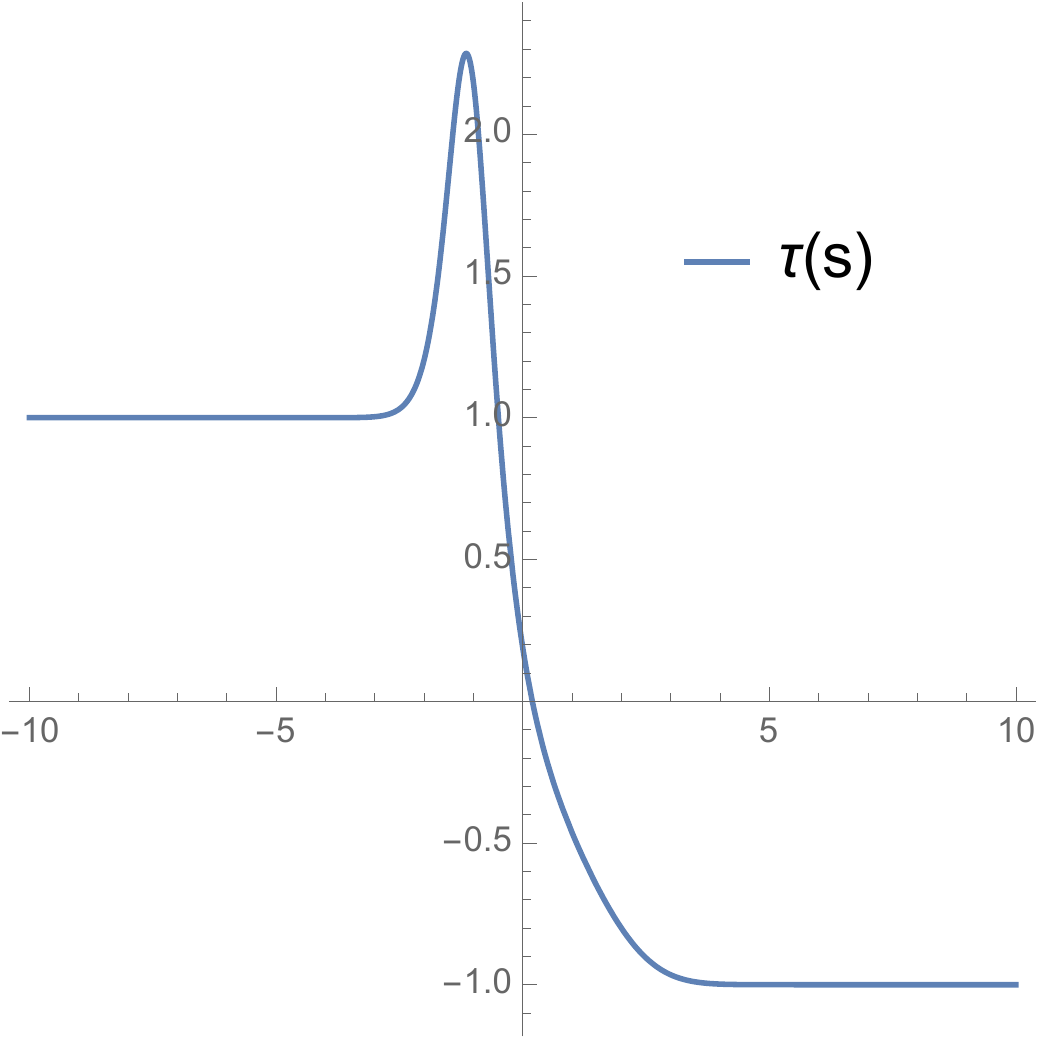}}
   \resizebox{2cm}{!}{\includegraphics{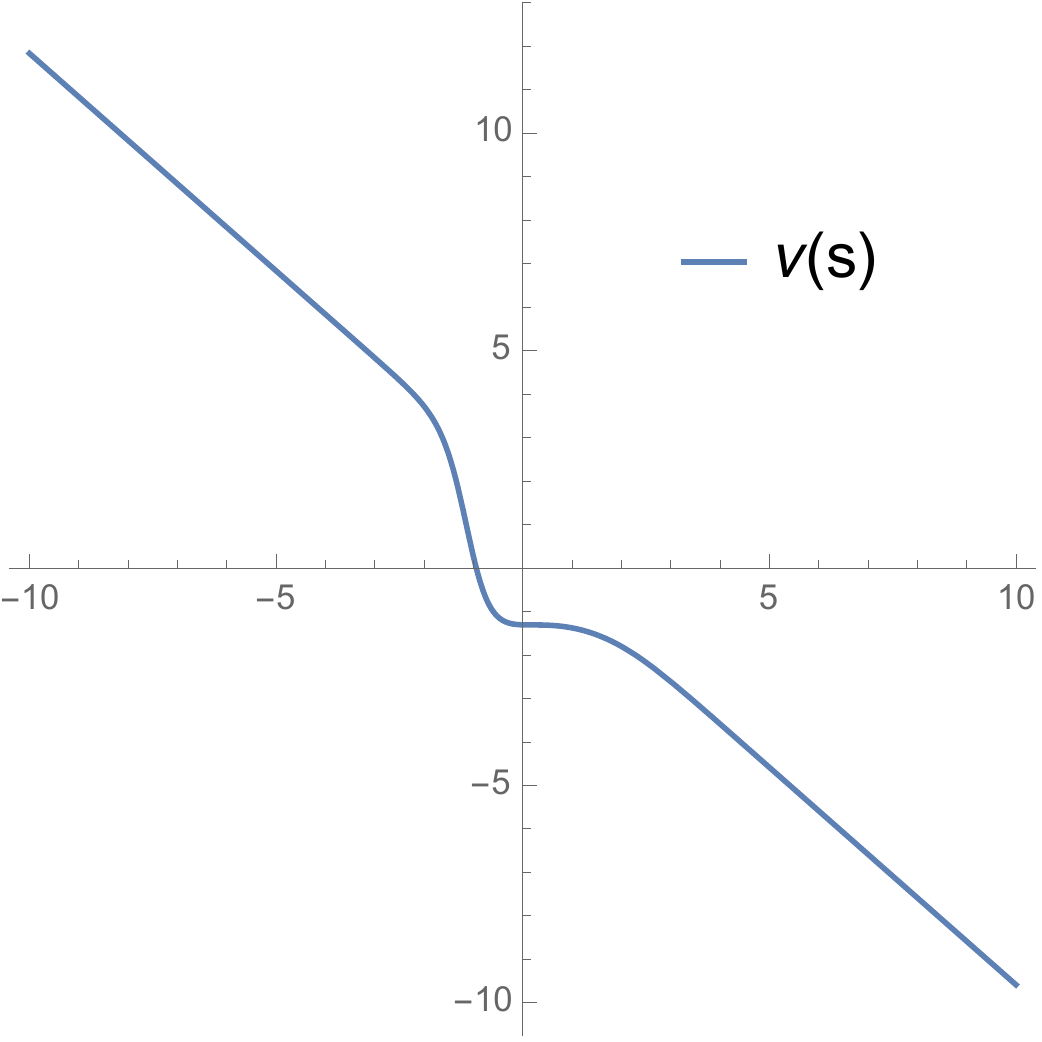}}
   \resizebox{2cm}{!}{\includegraphics{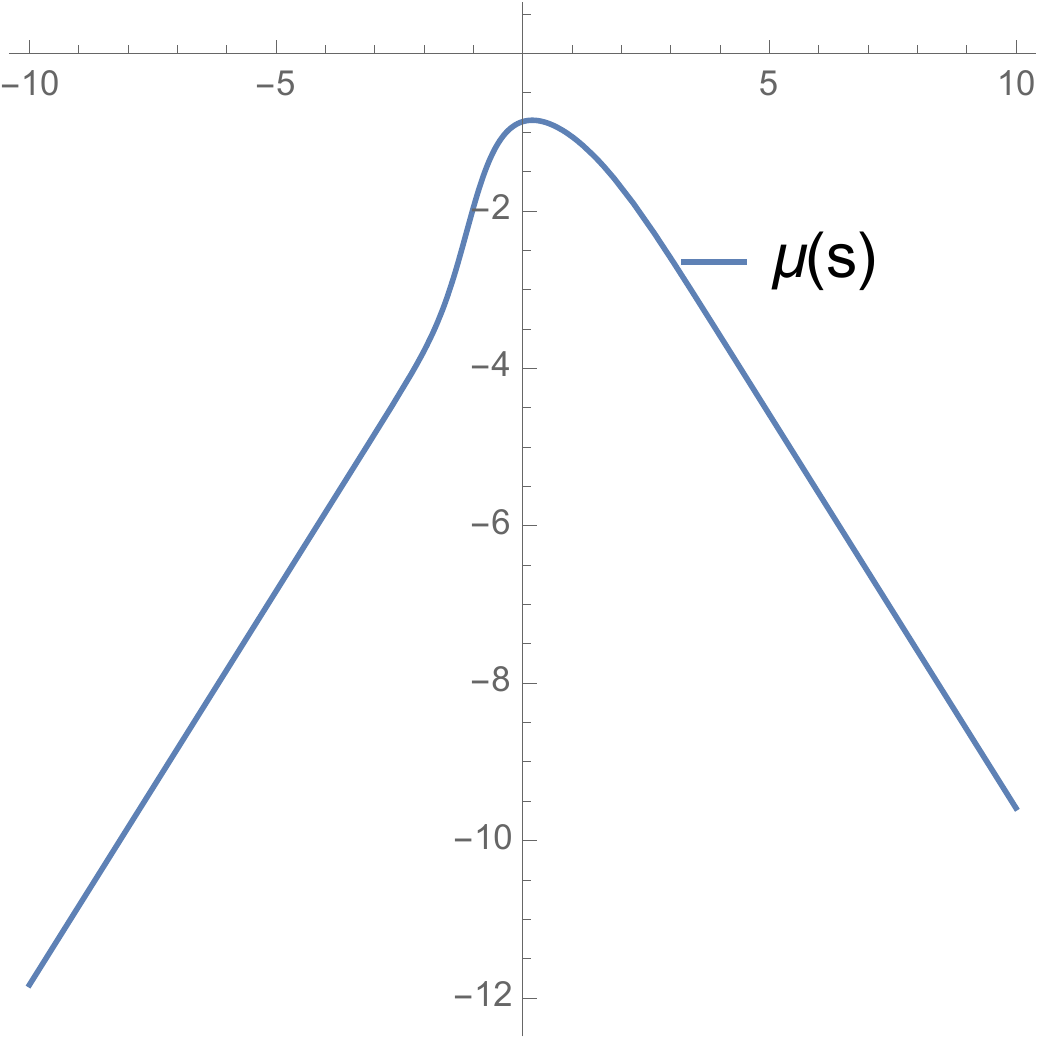}}
   \caption{Curve in the Poincar\'e disk model evolving by isometries with spacelike ${\tilde v}$, as in Lemma \ref{lemma4.6}.(ii).}\label{figure2}
  \end{center}
\end{figure}

\begin{figure}[!ht]
\begin{center}
   \resizebox{2cm}{!}{\includegraphics{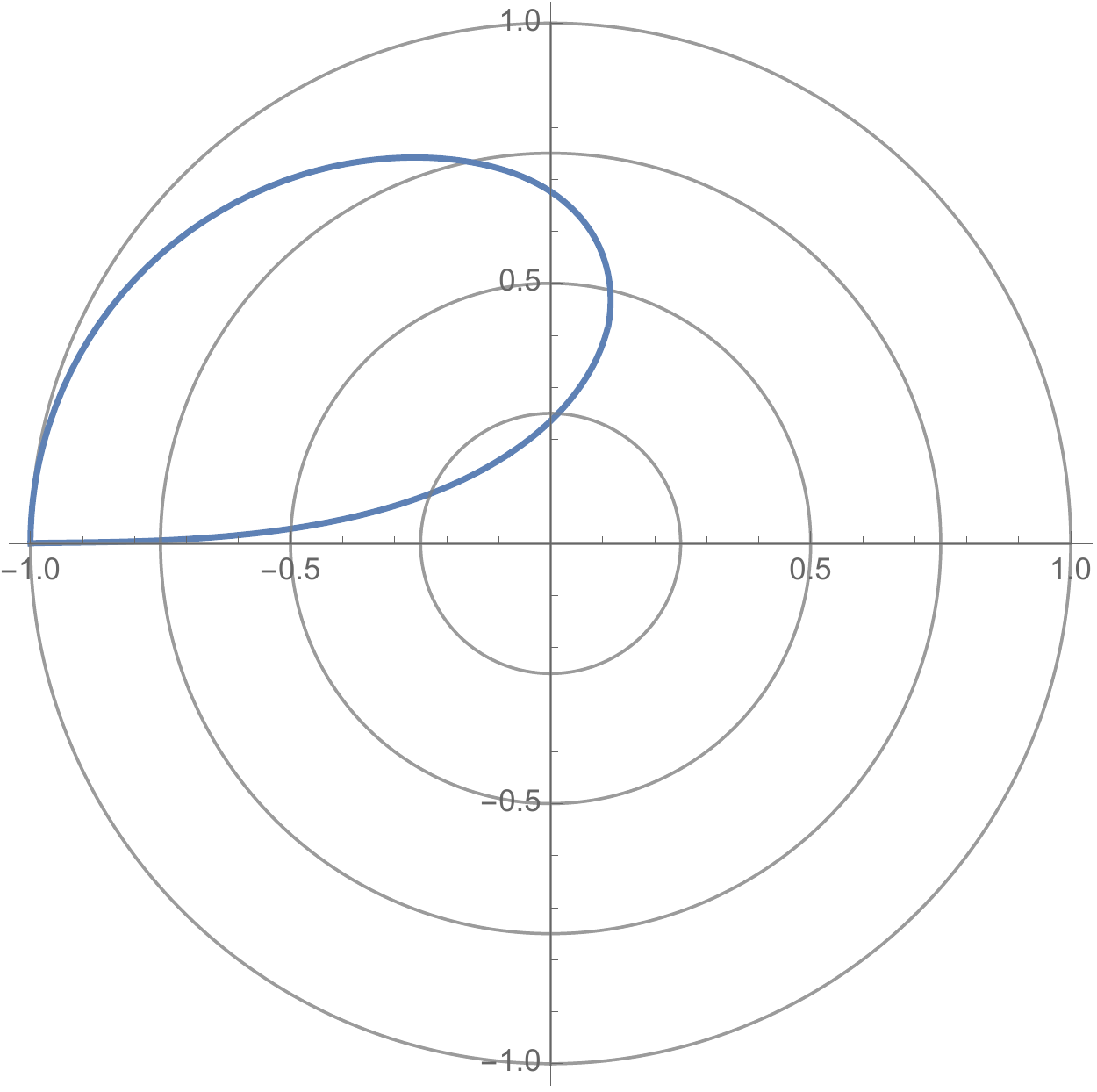}}
   \resizebox{2cm}{!}{\includegraphics{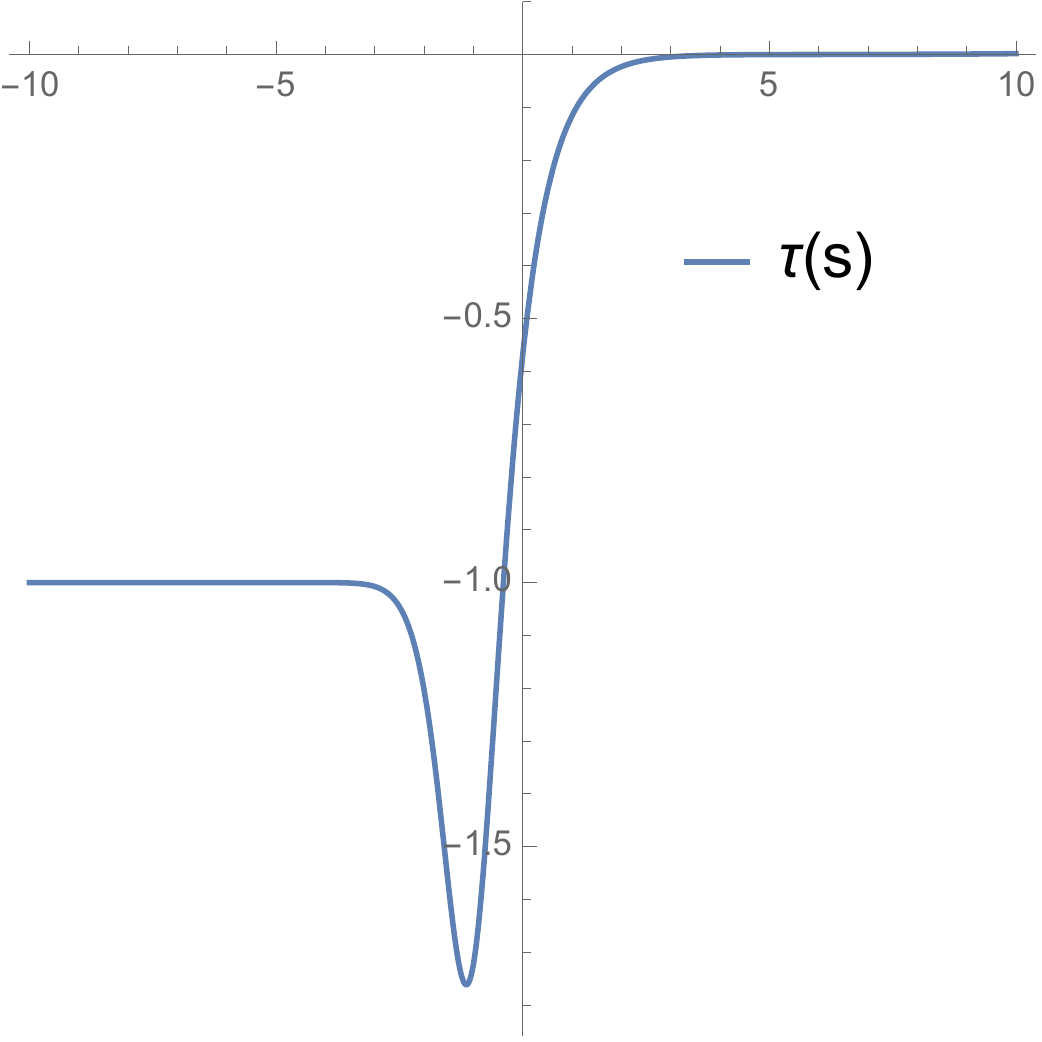}}
   \resizebox{2cm}{!}{\includegraphics{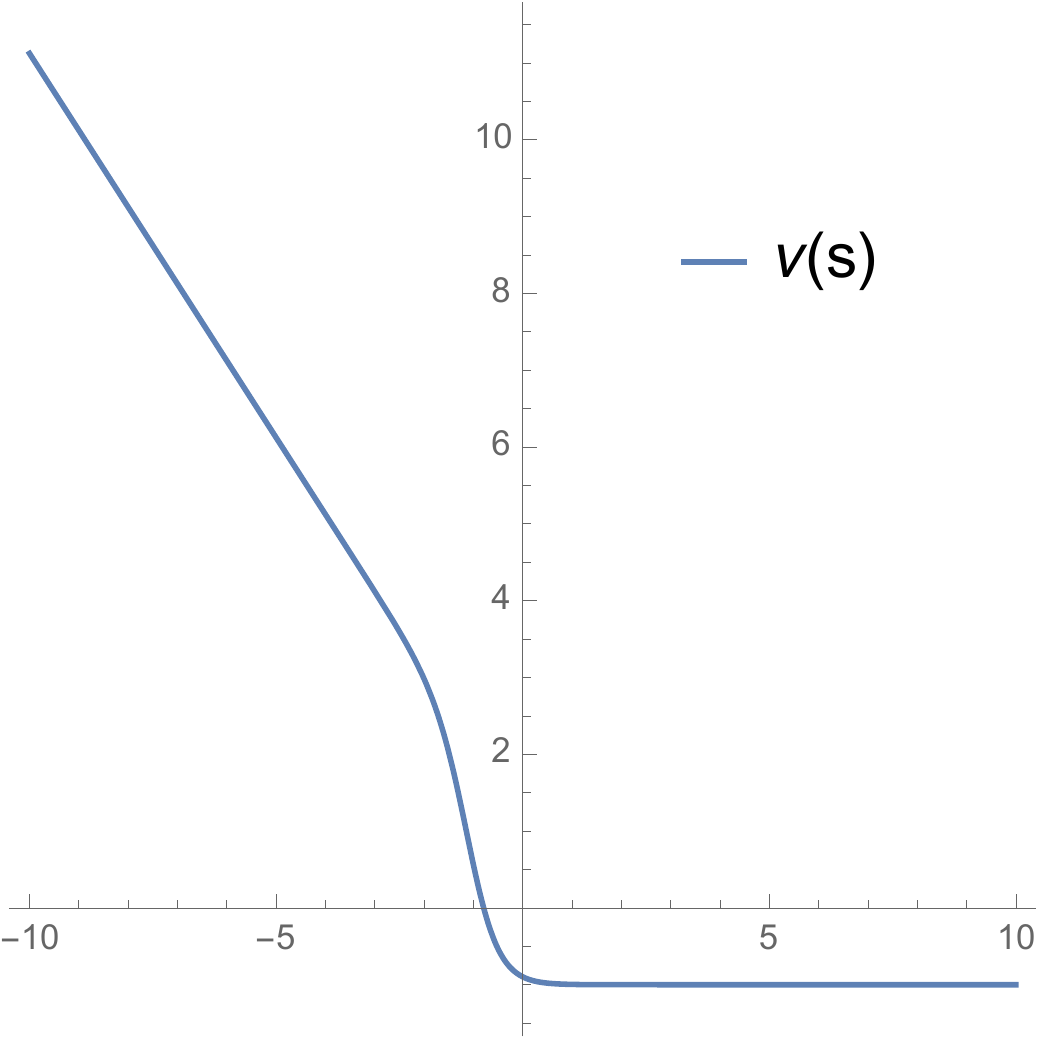}}
   \resizebox{2cm}{!}{\includegraphics{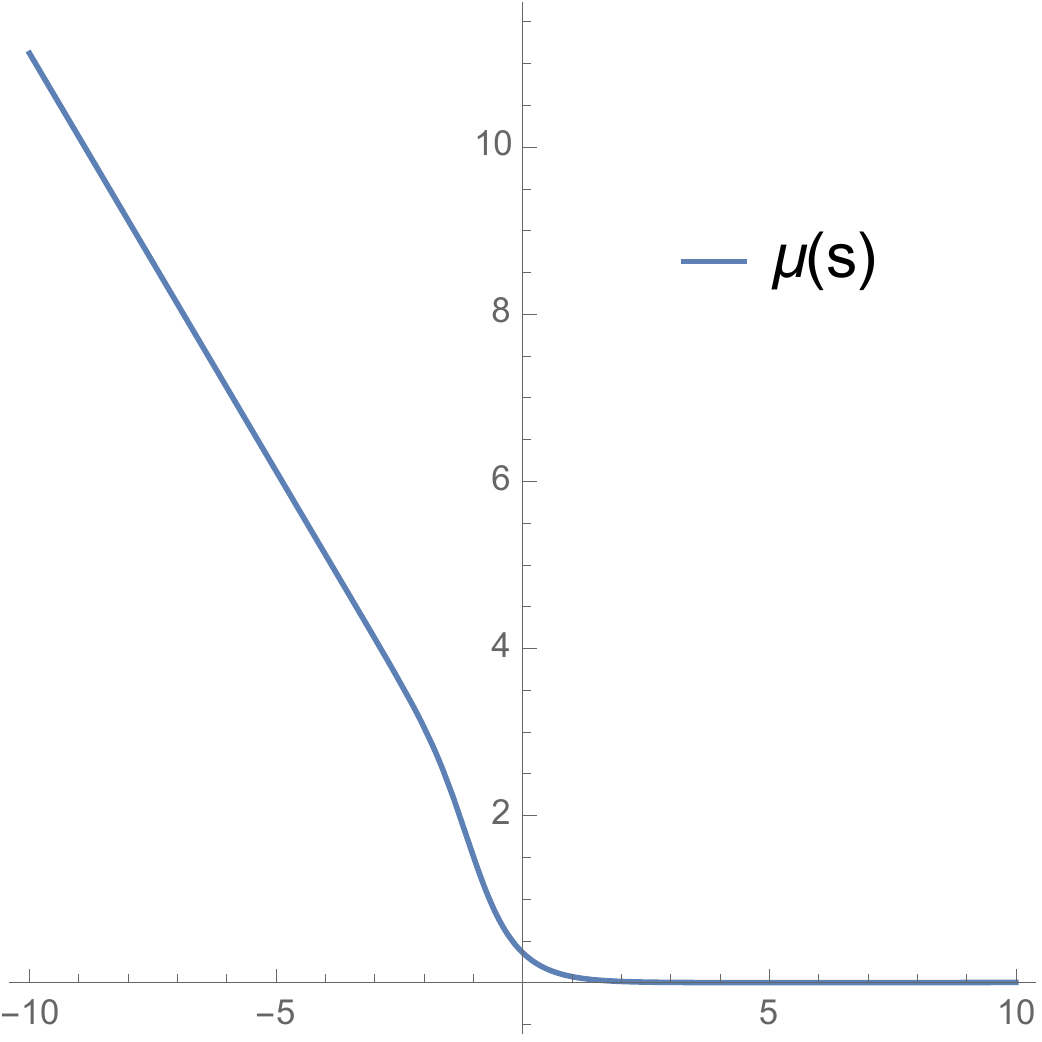}}
   \caption{Curve in the Poincar\'e disk model evolving by isometries with spacelike ${\tilde v}$ and converging to a geodesic at one end, as in Lemma \ref{lemma4.6}.(iii).}\label{figure3}
  \end{center}
\end{figure}

\begin{figure}[!ht]
\begin{center}
   \resizebox{2cm}{!}{\includegraphics{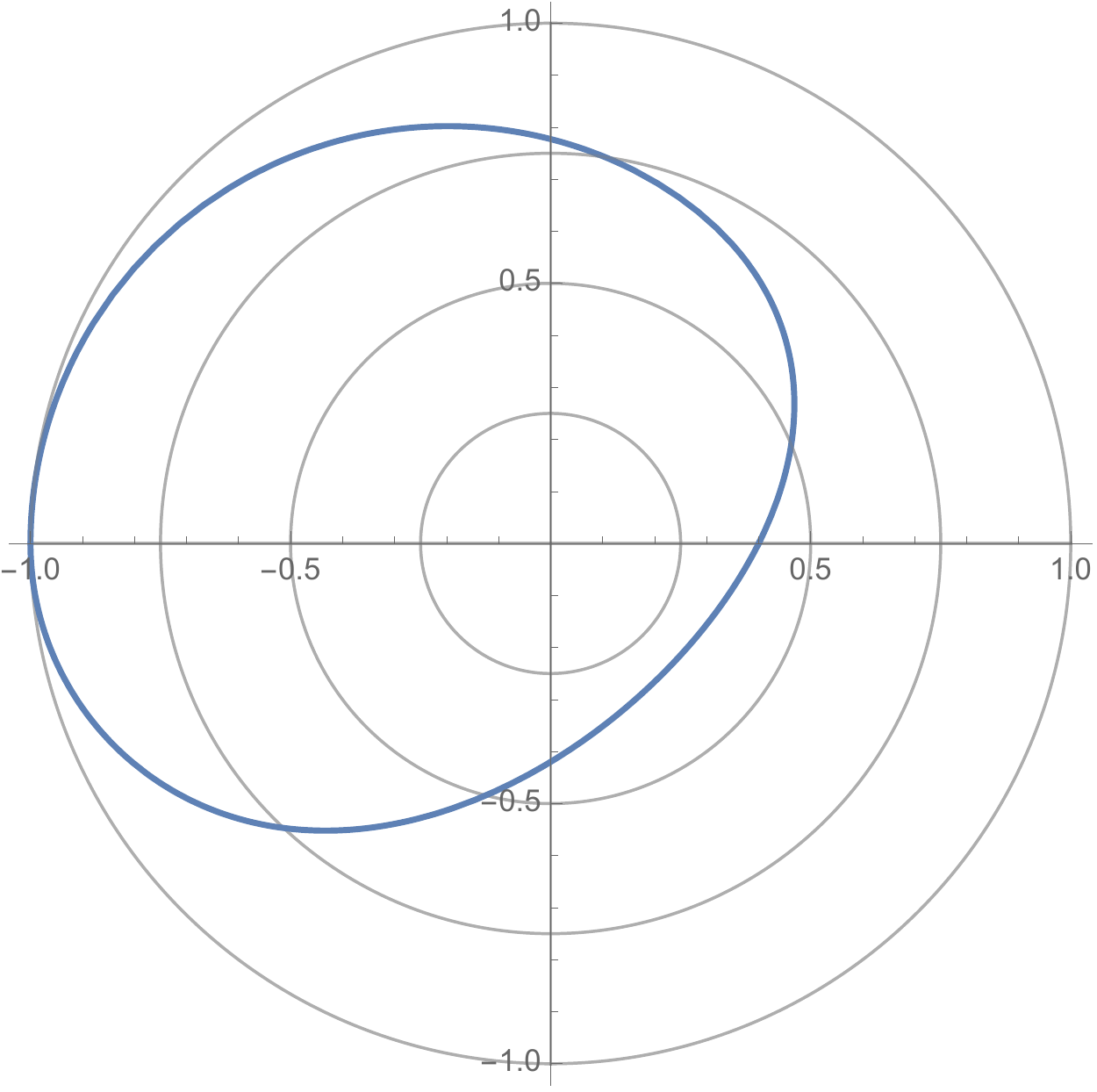}}
   \resizebox{2cm}{!}{\includegraphics{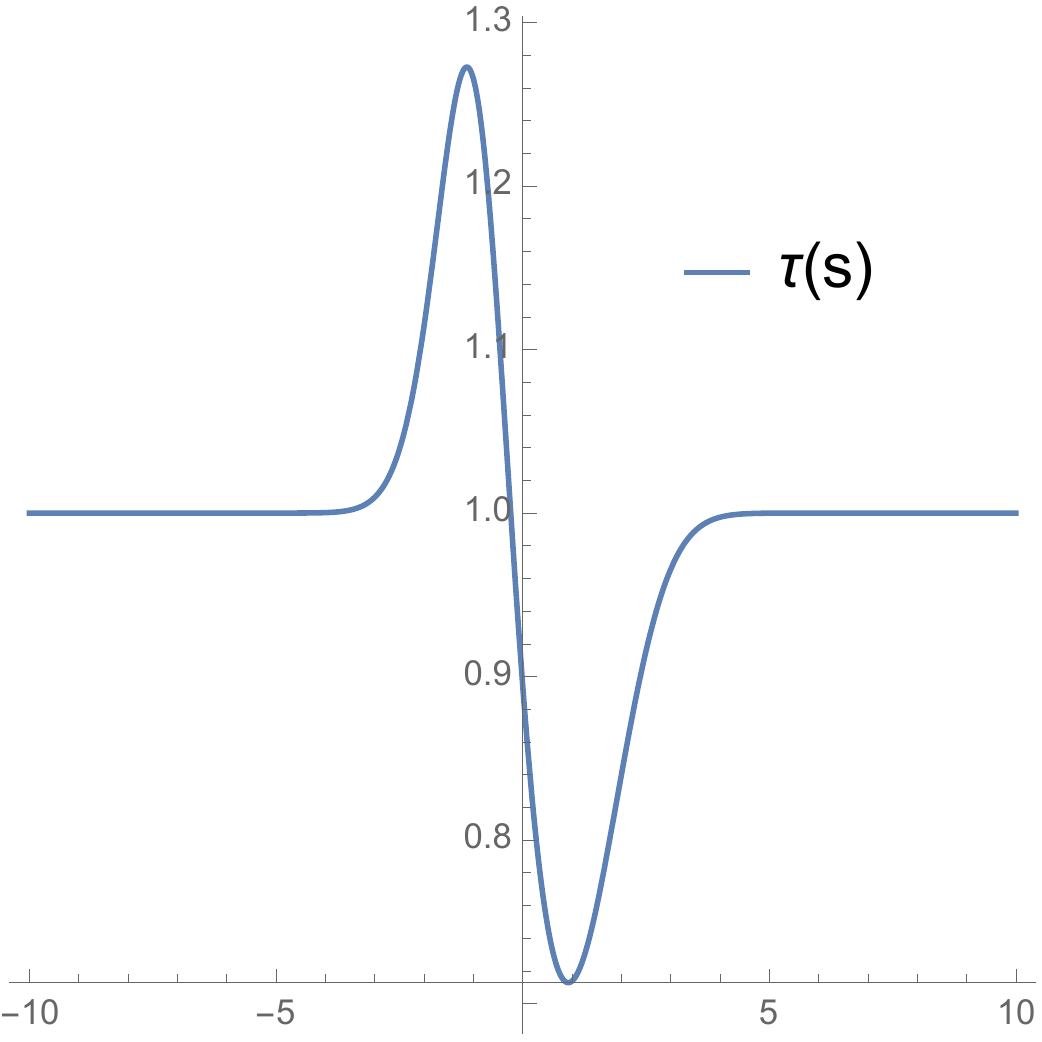}}
   \resizebox{2cm}{!}{\includegraphics{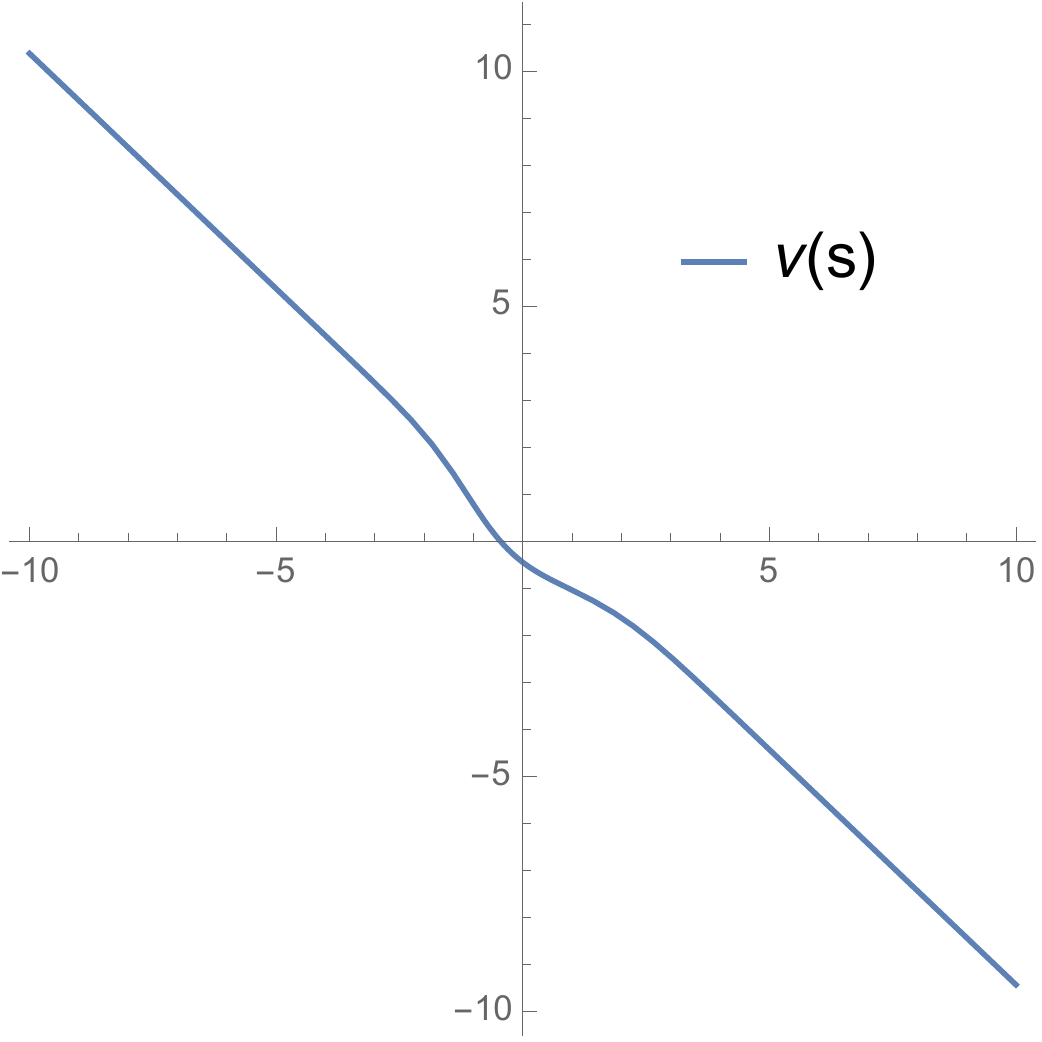}}
   \resizebox{2cm}{!}{\includegraphics{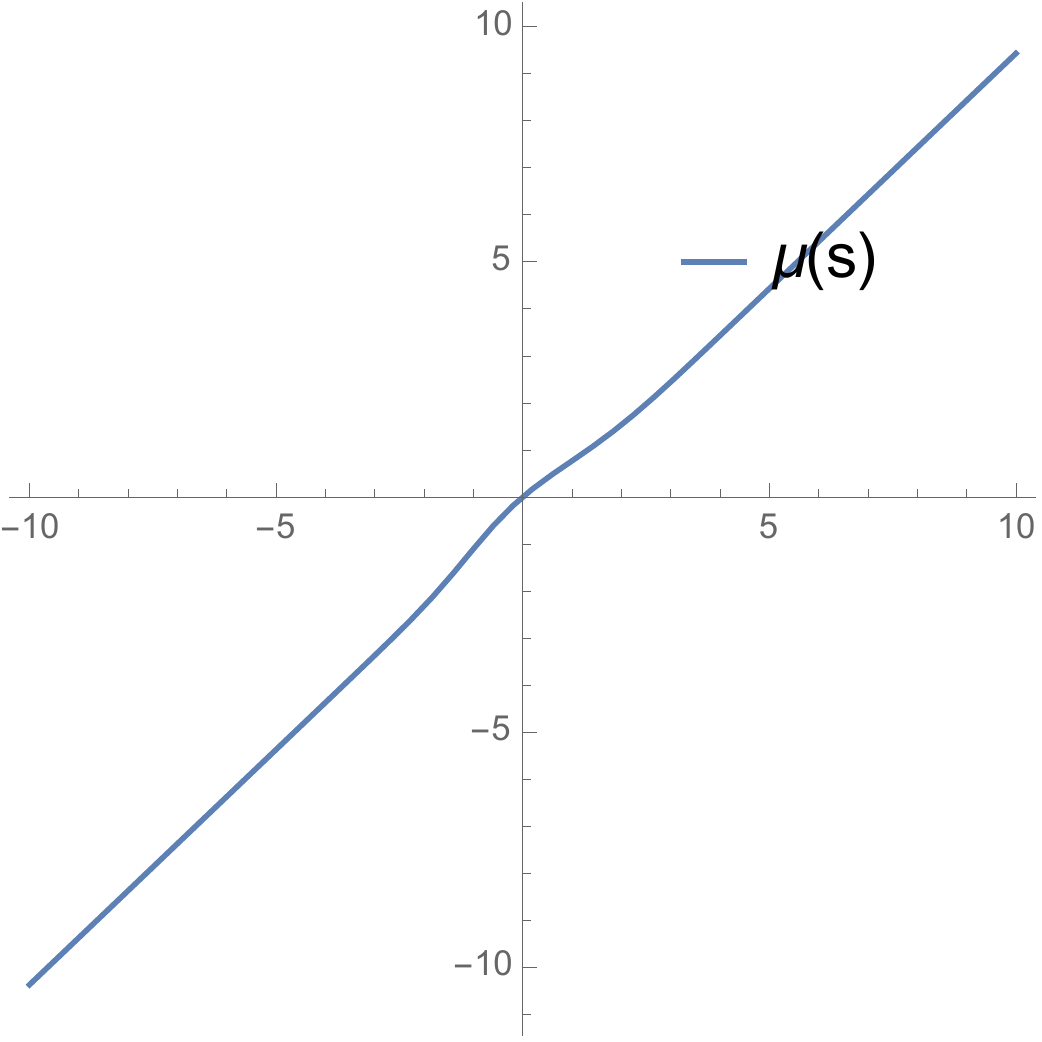}}
   \caption{Curve in the Poincar\'e disk model of hyperbolic space that evolves by isometry with spacelike $v$ corresponding to Lemma \ref{lemma4.6}.(iii), such that $\mu$ has a zero and diverges at both ends.}\label{figure4}
  \end{center}
\end{figure}

\subsection{Timelike ${\tilde v}$} For a timelike ${\tilde v}$, one can apply a constant boost to the axes of the standard basis of ${\mathbb M}^3$ so that in the boosted basis ${\tilde v}$ takes the form ${\tilde v}=(0,0,{\dot \theta})={\dot \theta}(0,0,1)={\dot \theta}v$. In this case, $\mu$ is the negative of the $z$-coordinate of the curve. We've chosen the sign so that $v$ is future-timelike, though ${\tilde v}$ can be either future- or past-timelike depending on the sign of ${\dot \theta}$.

\begin{lemma}\label{lemma4.7}
Let $X(s)$ be a non-geodesic curve evolving by isometries under CSF with future-timelike $v$. Then there is exactly one critical point of $\mu(s)$ along $X(s)$, a global maximum, and $\mu\to -\infty$ as $s\to\pm\infty$ along $X(s)$. See Figures \ref{figure5} and \ref{figure6}.
\end{lemma}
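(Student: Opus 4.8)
The plan is to exploit the defining constraint $\eta(v,v)=-1$ for a timelike $v$ together with the hyperboloid condition to pin down the sign of $\mu$, and then to feed this into the general structural results of Lemmas \ref{lemma4.1} and \ref{lemma4.5}. First I would record that, in the normalization $v=(0,0,1)$, equation \eqref{eq2.17} reads $\tau^2+\nu^2-\mu^2=-1$, so that $\mu^2=\tau^2+\nu^2+1\ge 1$ everywhere. Since $\mu(s)=\langle X(s),v\rangle_{\eta}=-z(s)$ and the curve lies on the sheet $z\ge 1$, this forces $\mu(s)\le -1<0$ for all $s$. Thus $\mu$ is everywhere negative and bounded away from $0$.

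Next I would invoke Lemma \ref{lemma4.5}: because $v$ is timelike, hence not achronal, the bounded-and-convergent alternative is excluded, so both $\mu$ and $\nu$ must be unbounded. Combined with $\mu<0$, this already shows $\mu$ cannot converge and must diverge to $-\infty$ along at least one end. (Alternatively, the unboundedness of $\mu$ follows directly from the divergence alternative in Lemma \ref{lemma4.1} once convergence to a nonzero constant is ruled out.)

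The crux is establishing that $\mu$ actually possesses a critical point. I would argue by contradiction: if $\mu$ had no critical point, then $\tau=\mu'$ would never vanish and $\mu$ would be strictly monotonic. A monotone function bounded above by $-1$ necessarily converges at one of its two ends; but Lemma \ref{lemma4.1} asserts that any finite limit of $\mu$ must equal $0$, contradicting $\mu\le -1$. Hence $\mu$ has a critical point $s_0$, and since $\mu(s_0)<0$, Lemma \ref{lemma4.1} identifies $s_0$ as the \emph{unique} critical point and a global maximum.

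Finally, the global-maximum property gives $\mu(s)\le\mu(s_0)<0$ for all $s$, while the unboundedness of $\mu$ established above forces $\mu\to-\infty$ at both ends $s\to\pm\infty$. I expect the only genuinely delicate step to be the existence of the critical point; once the sign $\mu<0$ is in hand, everything else is a direct reading of Lemmas \ref{lemma4.1} and \ref{lemma4.5}.
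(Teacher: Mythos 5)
Your proof is correct and follows essentially the same route as the paper: show $\mu$ is bounded above by a negative constant (the paper uses the fact that the inner product of two future-timelike vectors is negative and bounded away from zero, you use \eqref{eq2.17} with $\eta(v,v)=-1$ plus $\mu=-z\le -1$, which is equivalent), then apply Lemma \ref{lemma4.1} to get uniqueness of the critical point, the global maximum, and divergence to $-\infty$ at both ends. In fact you are slightly more careful than the paper at one point: the paper asserts the \emph{existence} of the critical point directly from Lemma \ref{lemma4.1}, whereas you supply the needed monotonicity-plus-convergence contradiction explicitly.
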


\begin{proof}
Since both $X$ and $v$ are future-timelike and nonzero, then $\mu(s)\le c< 0$ for some $c>0$ and all $s$, so by Lemma \ref{lemma4.1} $\mu$ has a negative global maximum and no other critical point. Since $\mu(s)$ cannot converge to $0$ along $X(s)$, it cannot converge at all and so must diverge to $-\infty$ as $s\to\pm\infty$.
\end{proof}

Since $\mu$ is proportional to $-z$ along a soliton with future-timelike $v$, then $\mu\to\infty$ implies that the curve $X(s)$ extends to infinity along the unit hyperboloid in ${\mathbb M}^3$.

\begin{figure}[!ht]
\begin{center}
   \resizebox{2cm}{!}{\includegraphics{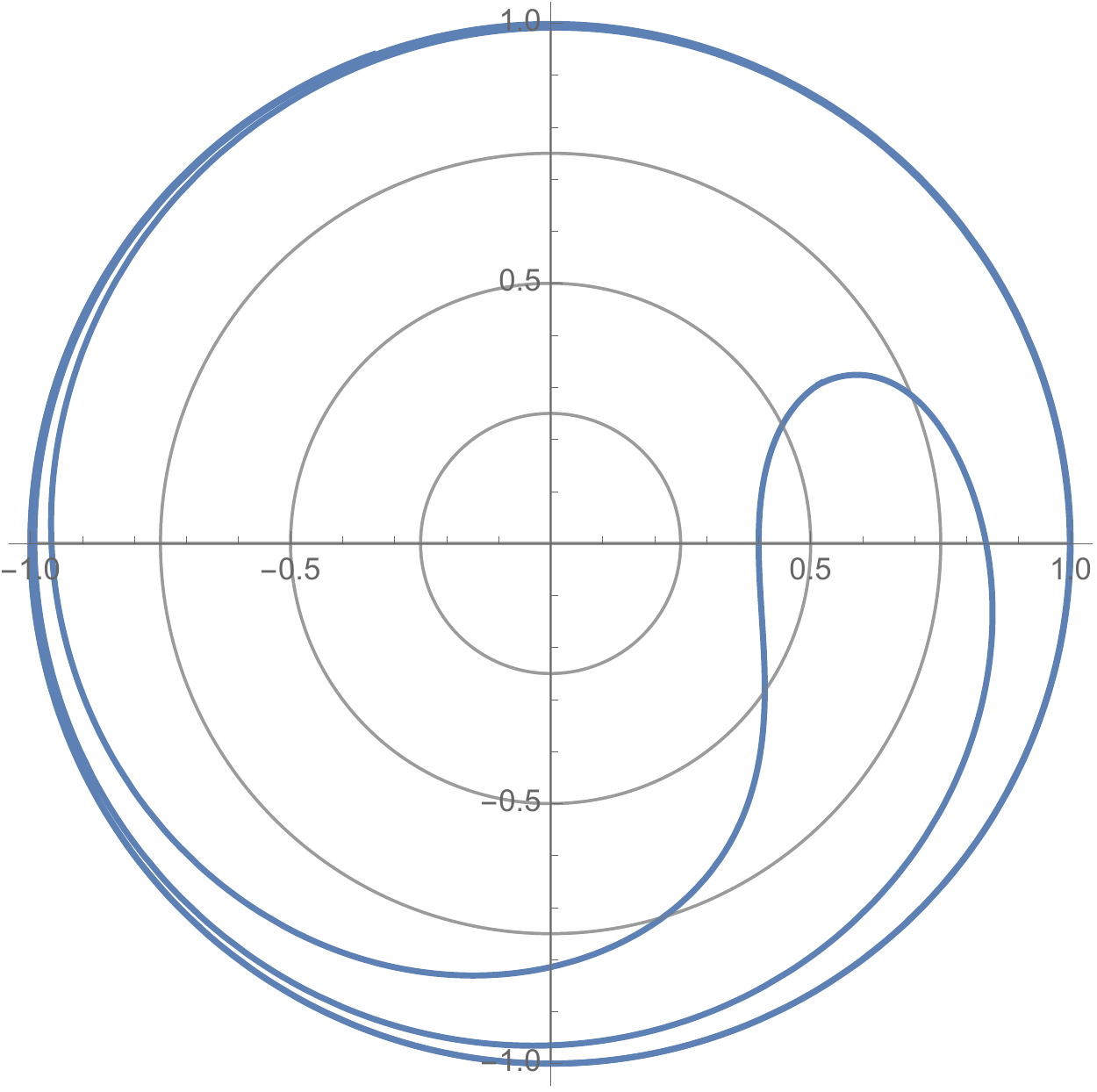}}
   \resizebox{2cm}{!}{\includegraphics{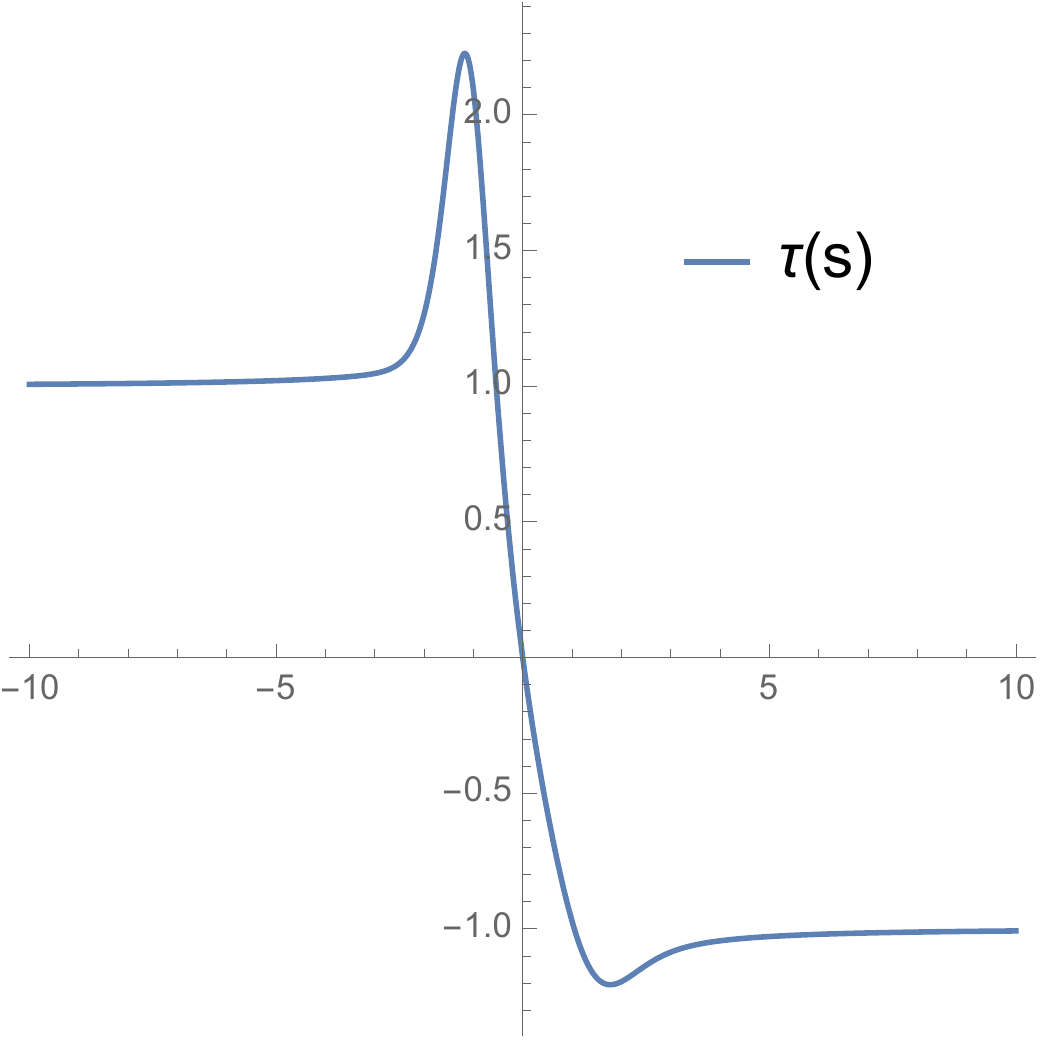}}
   \resizebox{2cm}{!}{\includegraphics{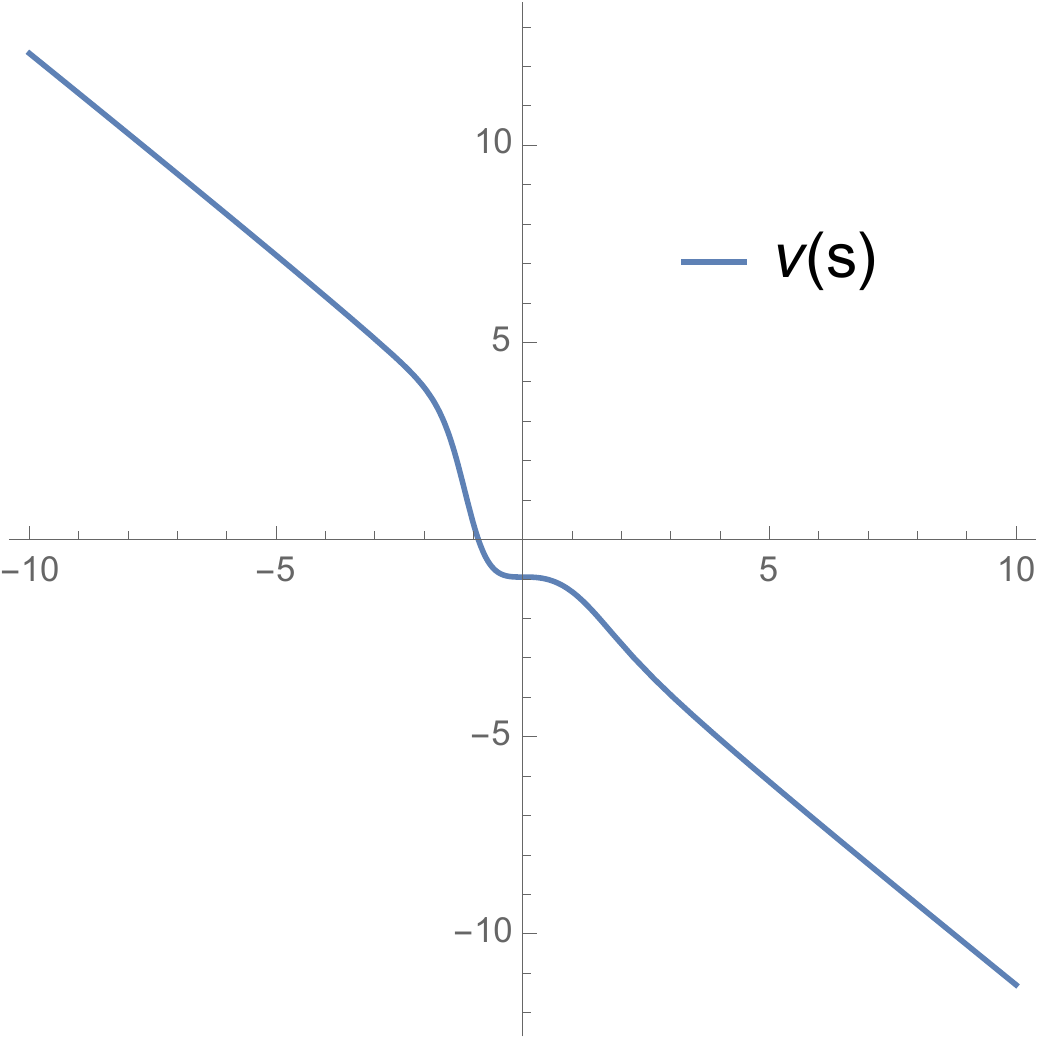}}
   \resizebox{2cm}{!}{\includegraphics{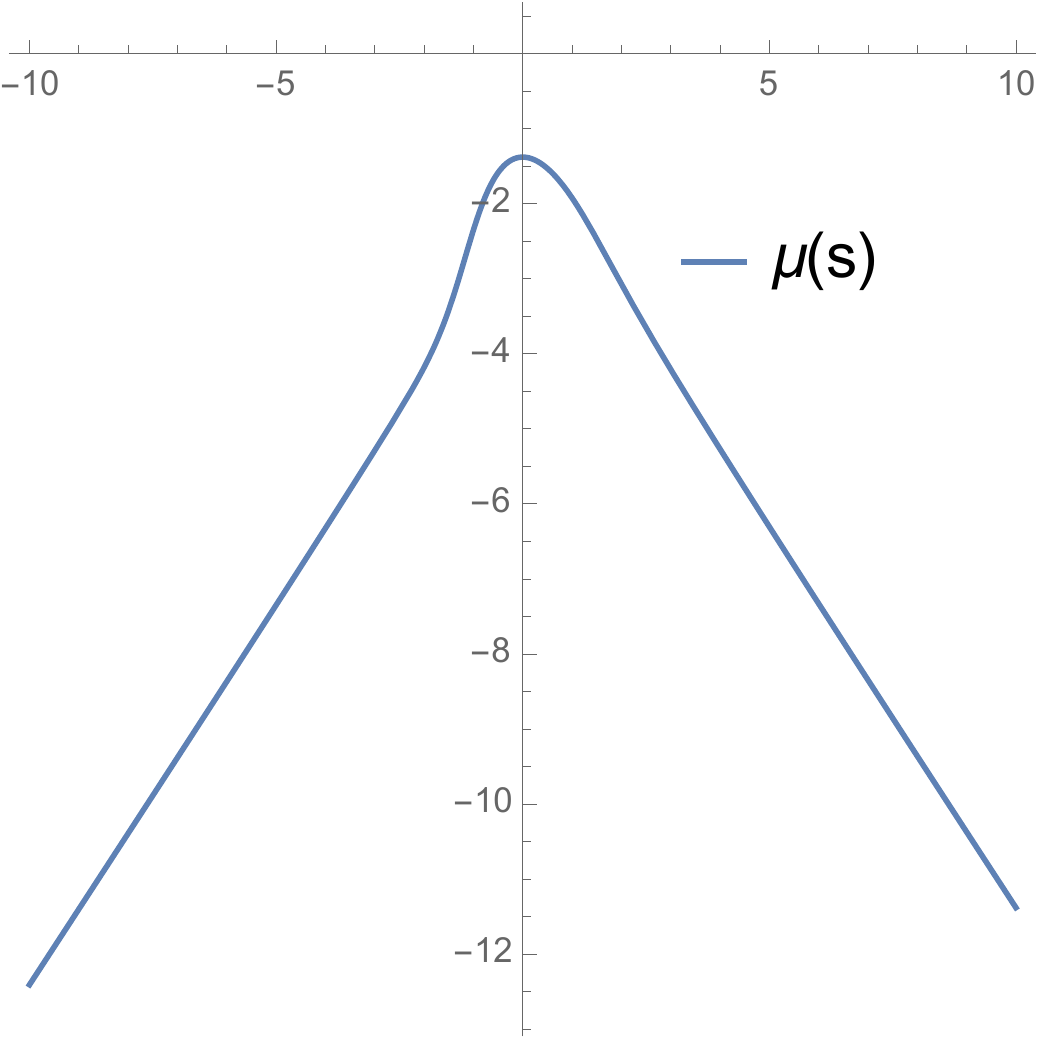}}
   \caption{Curve in the Poincar\'e disk model evolving by isometries with timelike ${\tilde v}$.}\label{figure5}
  \end{center}
\end{figure}

\begin{figure}[!ht]
\begin{center}
   \resizebox{2cm}{!}{\includegraphics{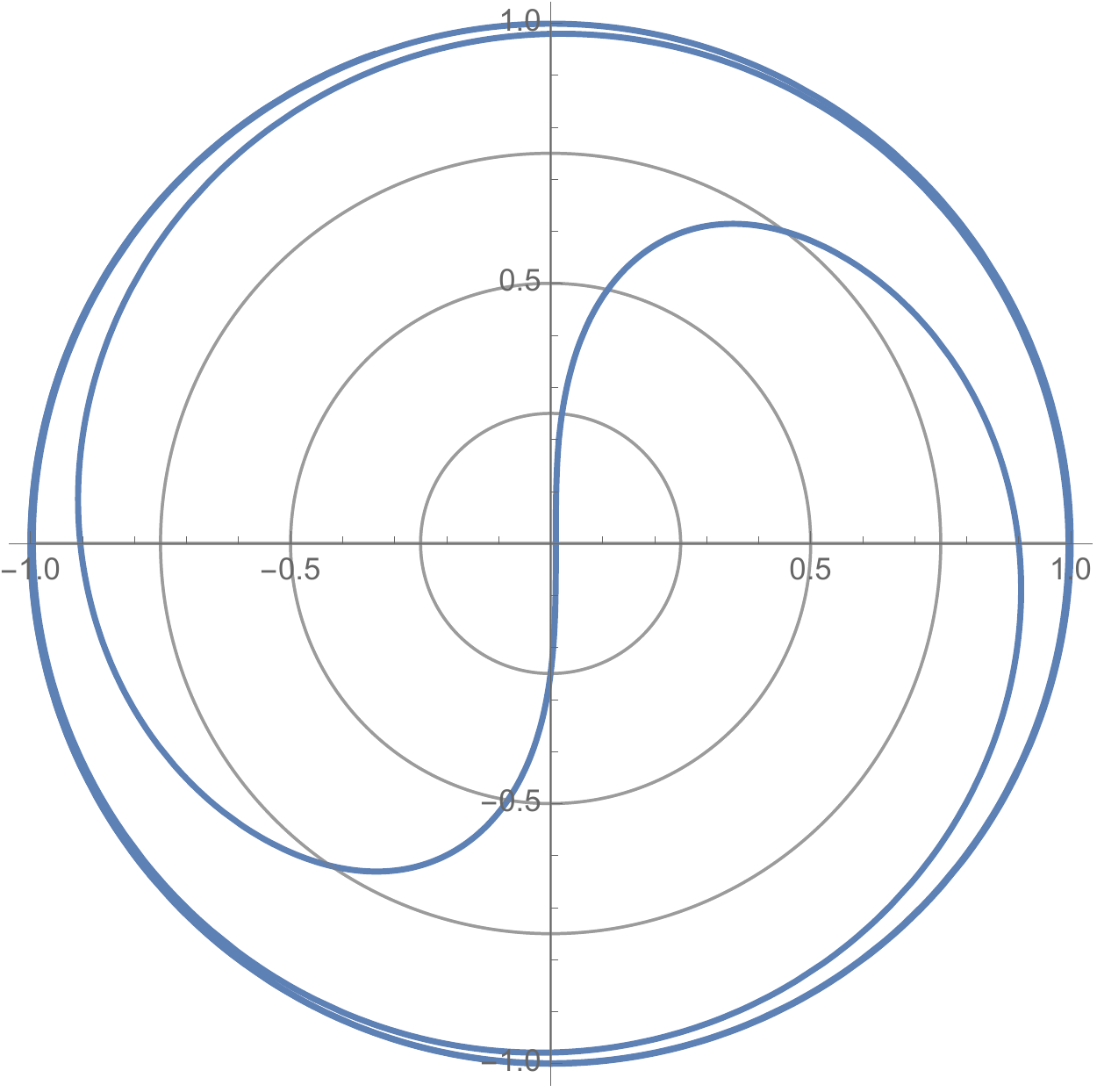}}
   \resizebox{2cm}{!}{\includegraphics{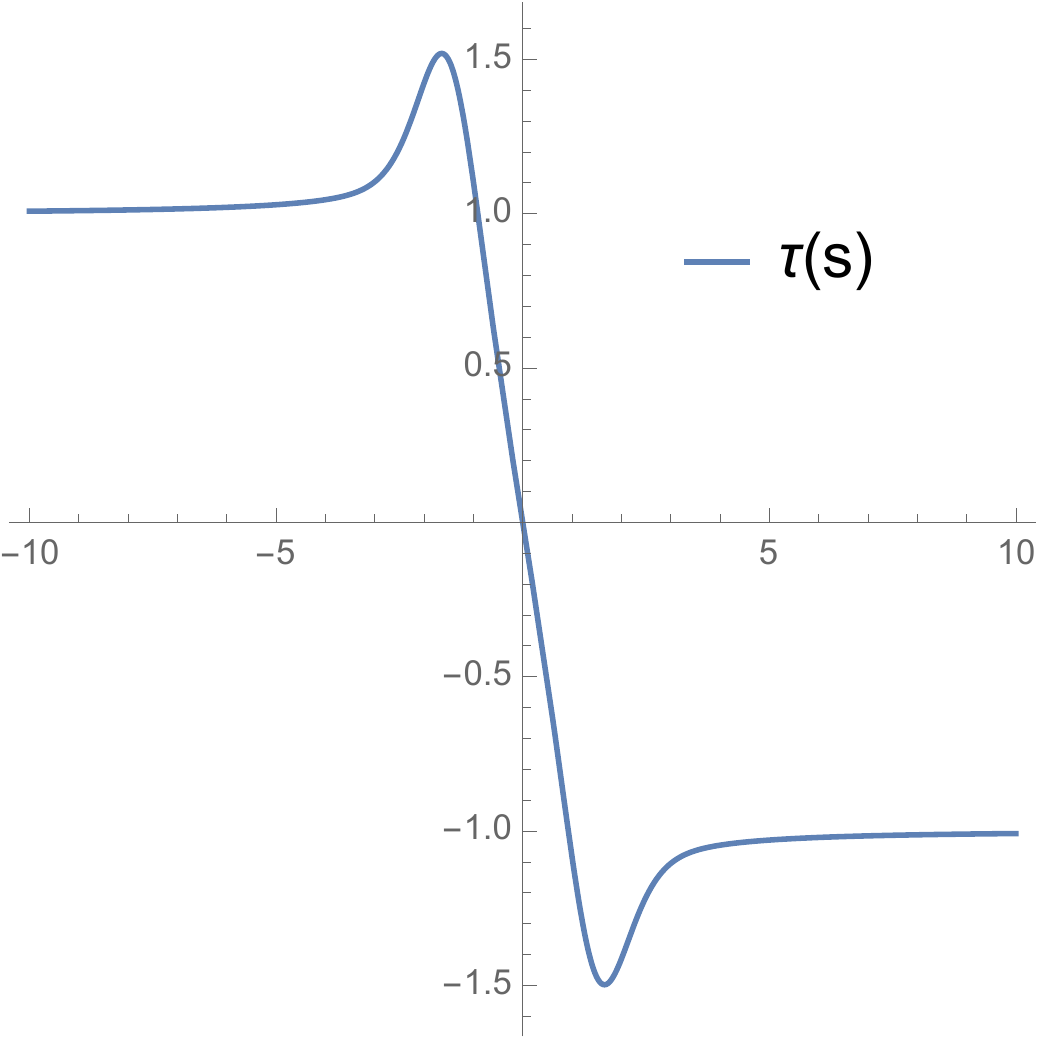}}
   \resizebox{2cm}{!}{\includegraphics{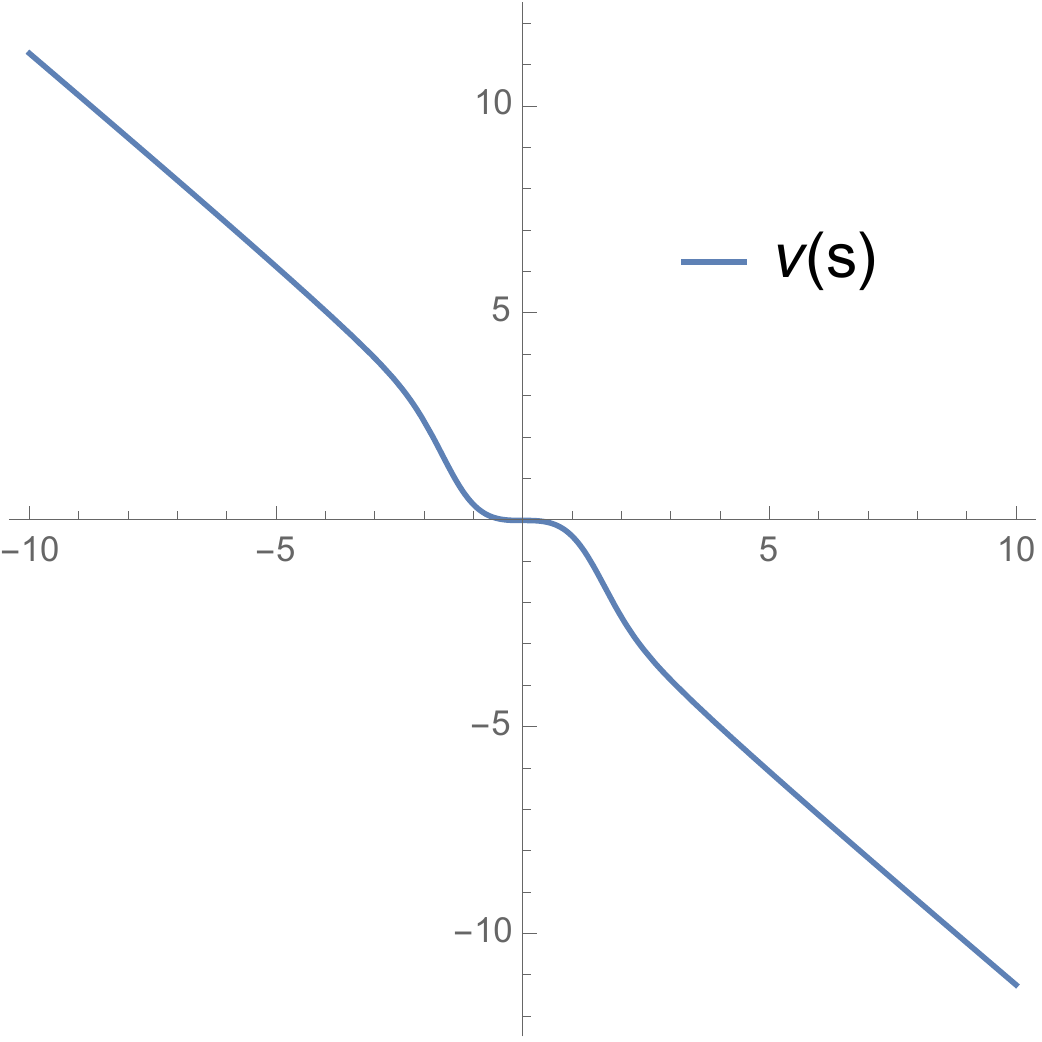}}
   \resizebox{2cm}{!}{\includegraphics{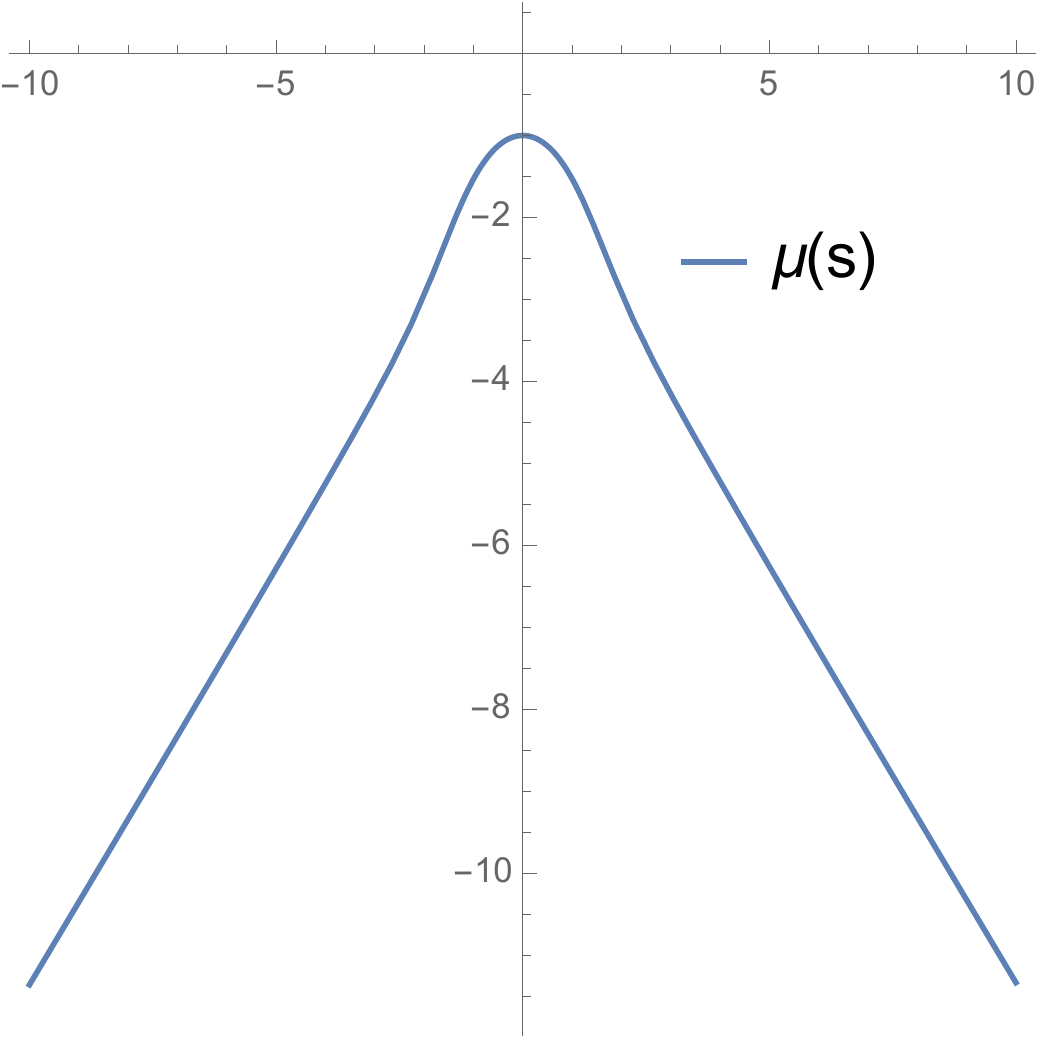}}
   \caption{Curve in the Poincar\'e disk model evolving by isometries with timelike ${\tilde v}$, which passes (nearly) through the origin, where $\mu$ is maximized.}\label{figure6}
  \end{center}
\end{figure}

\subsection{Null ${\tilde v}$} The remaining possibility is that ${\tilde v}$ is null. Applying a constant rotation to the axes if necessary, we can take ${\tilde v}=a(0,1,1)=:av$, so $\mu=y(s)-z(s)<0$ along $X(s)=(x(s),y(s),z(s))$, since $y<z$ at any point on the unit hyperboloid $z=\sqrt{x^2+y^2}$ in ${\mathbb M}^3$.

\begin{lemma}\label{lemma4.8}
Let $X(s)$ be a non-geodesic curve evolving by isometries under CSF with future-null $v$. Then there is at most one critical point of $\mu(s)$ along the curve $X(s)=(x(s),y(s),z(s))$, a global maximum, and $z(s)\to\infty$ at both ends of its domain. See Figure \ref{figure7}.
\end{lemma}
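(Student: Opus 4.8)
The plan is to split the statement into its two assertions---the shape of the critical set of $\mu$, and the divergence of $z$---and to reduce both to Lemma~\ref{lemma4.1} together with the defining constraint of the hyperboloid. First I would record the sign of $\mu$: with ${\tilde v}=a(0,1,1)$ and $X=(x,y,z)$, equation~\eqref{eq2.15} gives $\mu = y-z$, and since $z=\sqrt{1+x^2+y^2}\ge\sqrt{1+y^2}>y$ on the hyperboloid we have $\mu(s)<0$ for all $s$. Feeding $\mu<0$ into Lemma~\ref{lemma4.1} immediately yields the first assertion: every critical point of $\mu$ is a global maximum and is unique, so there is at most one, a global maximum.

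For the divergence of $z$ I would first pin down the possible limiting behaviour of $\mu$ at each end. Because $\mu$ has at most one critical point it is eventually monotone as $s\to+\infty$ and as $s\to-\infty$, hence has a limit in $[-\infty,0]$ at each end; by the convergence dichotomy of Lemma~\ref{lemma4.1} this limit is either $0$ or $-\infty$. The key algebraic step is then to solve for $z$ using two relations: $z-y=-\mu$ and $z^2-y^2=1+x^2$, the latter being the hyperboloid equation. Dividing gives $z+y=(1+x^2)/(-\mu)$, and adding the two expressions yields
\begin{equation*}
z=\frac12\left(\frac{1+x^2}{-\mu}+(-\mu)\right)\ge \frac12\left(\frac{1}{-\mu}+(-\mu)\right)\ ,
\end{equation*}
where I used $1+x^2\ge 1$ and $-\mu>0$.

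The final step is to observe that the right-hand side blows up at both extremes of $-\mu$: as $-\mu\to\infty$ the term $-\mu/2$ dominates, while as $-\mu\to 0^{+}$ the term $1/(-2\mu)$ dominates. Since at each end $-\mu$ tends either to $0^{+}$ or to $+\infty$, in every case $z(s)\to\infty$ at both ends, as claimed. The only mildly subtle point---and the one I would flag as the conceptual heart of the null case rather than a computational obstacle---is that $z\to\infty$ even when $\mu\to 0$: geometrically $\mu\to 0$ forces the null gap $z-y\to 0$, which combined with $z^2-y^2\ge 1$ drives $z+y\to\infty$ and hence sends the curve off to infinity along the null direction. No genuinely hard estimate is needed here; in particular the boundedness of $\tau$ from Lemma~\ref{lemma4.4} and the finer asymptotics of Lemma~\ref{lemma4.5} are not required for this statement.
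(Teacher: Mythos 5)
Your proof is correct and follows essentially the same route as the paper: the sign observation $\mu=y-z<0$ on the hyperboloid feeds into Lemma \ref{lemma4.1} for the critical-point claim, and the hyperboloid identity $(z-y)(z+y)=1+x^2$ handles the divergence of $z$. Your single inequality $z\ge\tfrac12\bigl(\tfrac{1}{-\mu}+(-\mu)\bigr)$ merely unifies the paper's two cases ($\mu\to-\infty$ versus $\mu\to 0^-$) into one estimate, which is a tidy but not substantively different presentation.
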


\begin{proof}
Since $y<z$ at each point of the hyperboloid $x^2+y^2-z^2=-1$, $z>0$, then $\mu(s)=y(s)-z(s)<0$ so by Lemma \ref{lemma4.1} there is at most one critical point $s_0$ of $\mu$, and it must be a global maximum. When this occurs, then $\mu(s)\le \mu(s_0)<0$ so $\mu(s)$ cannot converge to zero, and so $\mu\to-\infty$ and $z\to\infty$ as $s\to\pm\infty$.

On the other hand, if there is no such maximum then $\mu$ converges to zero from below as $s\to\infty$ (or as $s\to -\infty$, but not both). But for future-null $v$, $\left \langle v,X\right \rangle \equiv \mu\to 0$ implies that $y-z\to 0$ along the curve. Since $x^2+(y-z)(y+z)=-1$, this can only happen if $y,z\to\pm\infty$.
\end{proof}

For past-null $v$, we take $v\mapsto -v$ so $\mu\mapsto -\mu$ and the global maximum in the lemma becomes a global minimum.

\begin{figure}[!ht]
\begin{center}
   \resizebox{2cm}{!}{\includegraphics{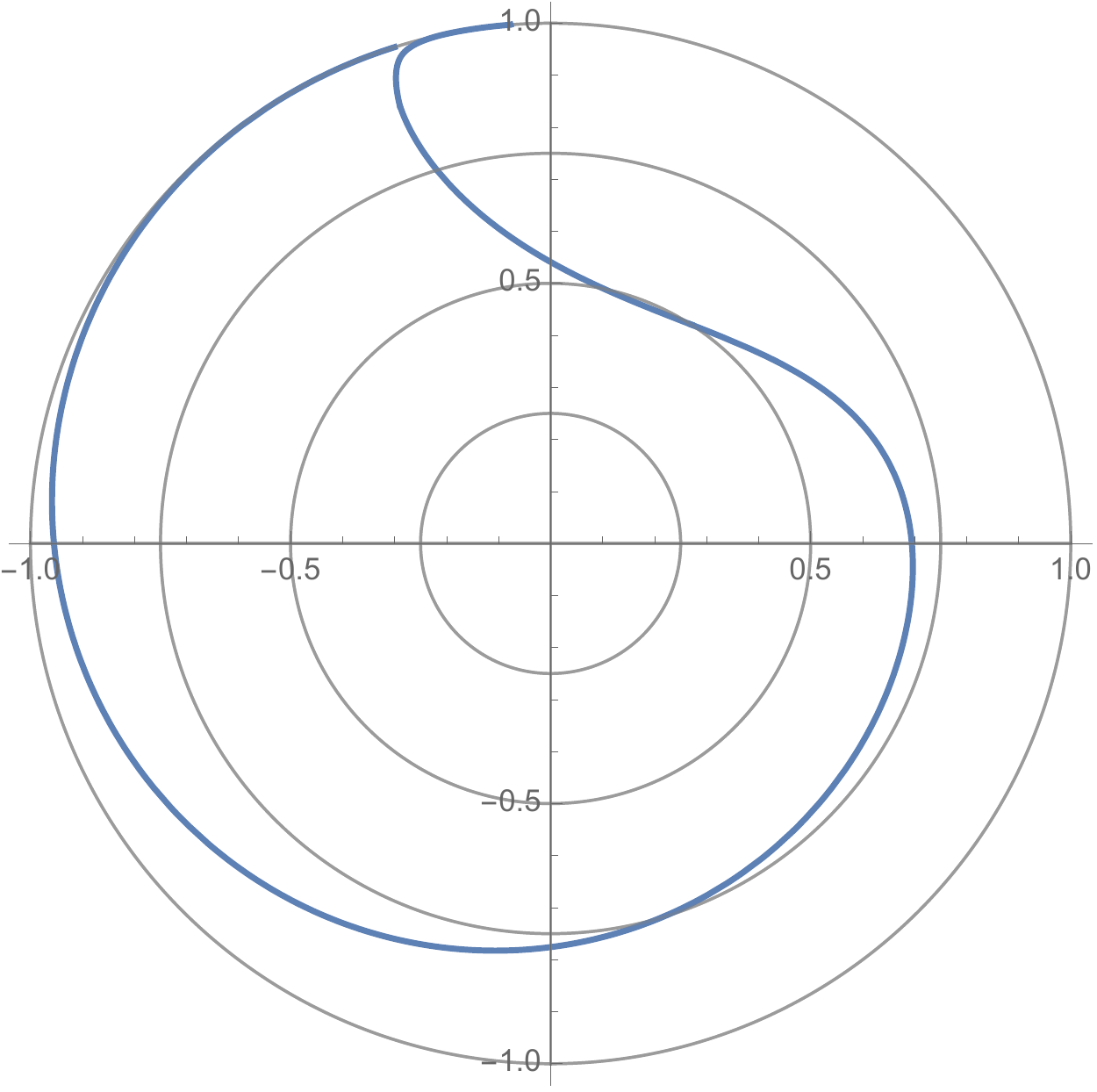}}
   \resizebox{2cm}{!}{\includegraphics{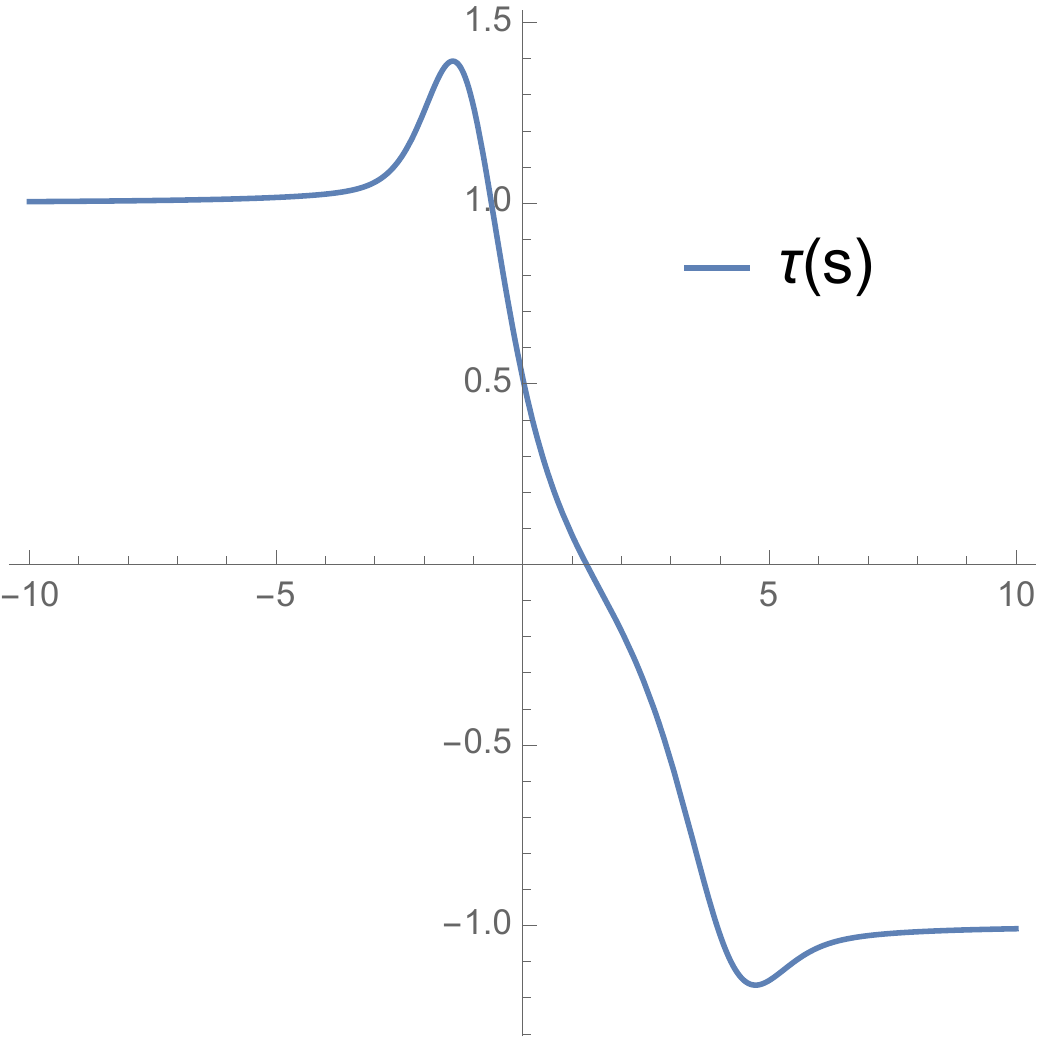}}
   \resizebox{2cm}{!}{\includegraphics{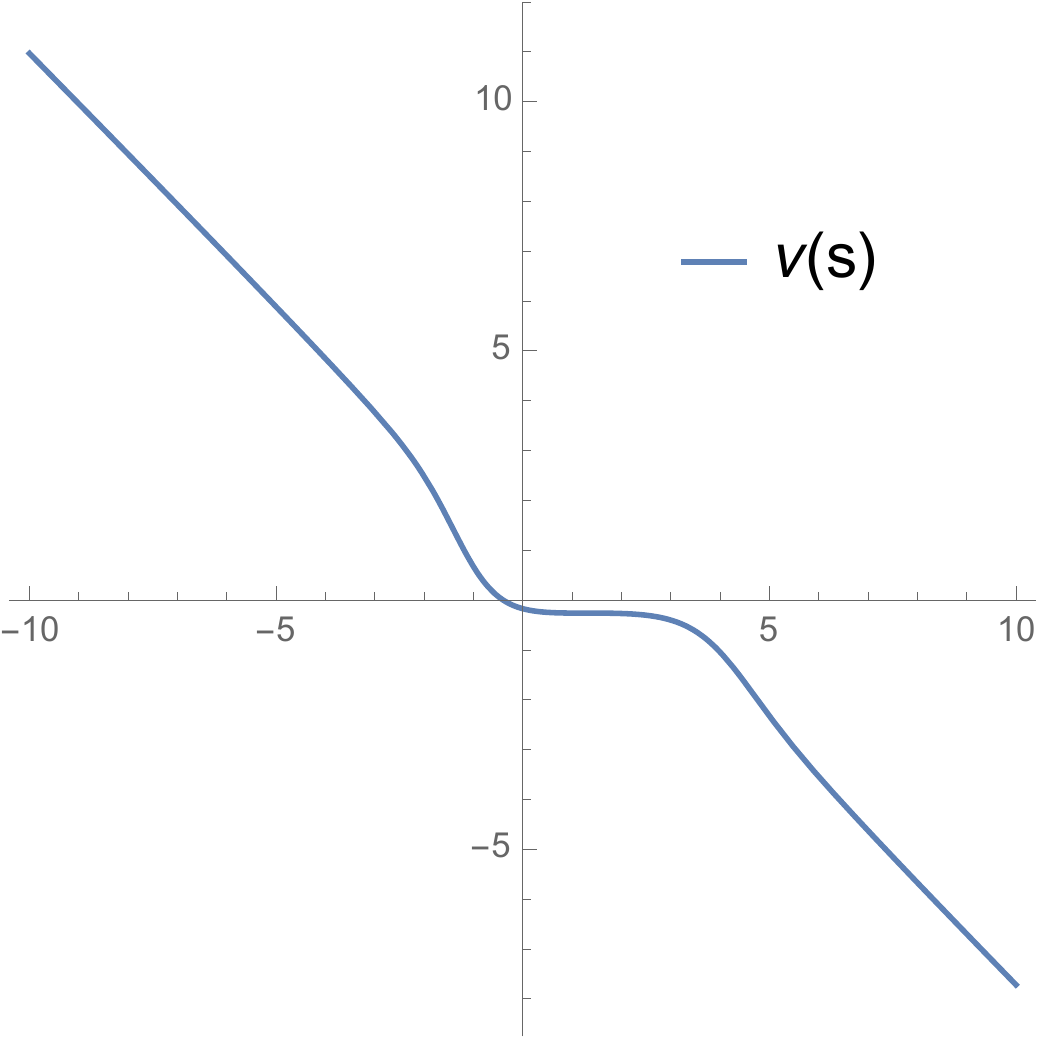}}
   \resizebox{2cm}{!}{\includegraphics{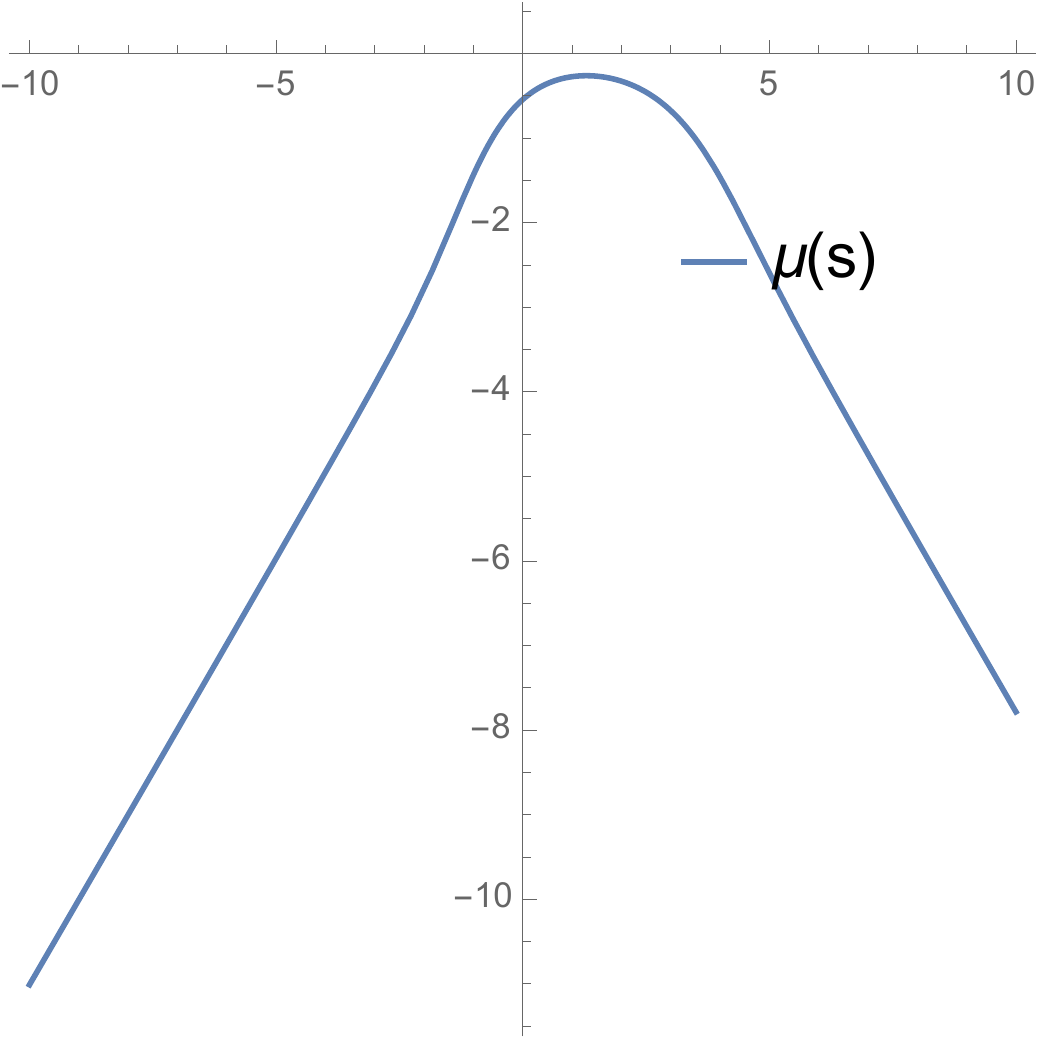}}
   \caption{Curve in the Poincar\'e disk model evolving by isometries with null ${\tilde v}$.}\label{figure7}
  \end{center}
\end{figure}

\subsection{Proof of the main theorem}

First we give a brief proof of the following simple property.

\begin{lemma}\label{lemma4.9}
Curves evolving by isometries in ${\mathbb H}^2$ have no self-intersections, and are properly embedded.
\end{lemma}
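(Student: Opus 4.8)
The plan is to prove the two claims separately, using the monotone quantity $\nu$ as the main tool for non-self-intersection, and then upgrading to properness via the arclength/escape remarks already established at the start of Section 4. First I would establish that the curve has no self-intersections. Suppose for contradiction that $X(s_1)=X(s_2)$ for some $s_1\neq s_2$. The key observation is that the triple $(\tau(s),\nu(s),\mu(s))$ is determined along the curve by the inner products of the constant vector $v$ with the moving frame $\{T(s),N(s),X(s)\}$, via \eqref{eq2.15}. At a self-intersection point $p=X(s_1)=X(s_2)$ the position vectors agree, so $\mu(s_1)=\langle X(s_1),v\rangle_\eta=\langle X(s_2),v\rangle_\eta=\mu(s_2)$. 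Now invoke Lemma \ref{lemma4.1}: since $\mu$ has at most one critical point and is otherwise strictly monotone on each side of that critical point, the equation $\mu(s_1)=\mu(s_2)$ with $s_1\neq s_2$ forces $s_1$ and $s_2$ to lie on opposite sides of the unique critical point $s_0$, symmetric in the sense that $\mu$ takes the common value once on each side. This already sharply constrains the geometry.

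Next I would use the second component $\nu$ to derive the contradiction. By Lemma \ref{lemma4.2}, $\nu$ is \emph{strictly} monotone (it satisfies $\nu'=-a\tau^2\le 0$, vanishing only at the isolated critical point of $\mu$), so $\nu(s_1)\neq\nu(s_2)$ whenever $s_1\neq s_2$ and the two points are not both the critical point. But at a genuine self-intersection the two curve branches pass through the same point $p$ of $\mathbb{H}^2$ with (generically) distinct unit tangents; the crucial point is that $\nu(s)=\langle N(s),v\rangle_\eta$ together with $\mu(s)=\langle X(s),v\rangle_\eta$ and the constraint \eqref{eq2.17}, $\tau^2+\nu^2-\mu^2=\eta(v,v)$, pins down the frame. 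Since $\mu(s_1)=\mu(s_2)$ but $\nu(s_1)\neq\nu(s_2)$ by strict monotonicity, equation \eqref{eq2.17} forces $\tau(s_1)^2\neq\tau(s_2)^2$; combined with $\mu'=\tau$ and the reflection symmetry about $s_0$, one checks that the two branches through $p$ have tangent vectors that are reflections of one another and hence the curve is, near $p$, symmetric rather than transversally self-crossing — and a careful look shows the putative second passage cannot actually return to $p$ while respecting the strict decrease of $\nu$. The cleanest route is: if $X(s_1)=X(s_2)$ then both the position and the fact that $\{T,N,X\}$ is an oriented orthonormal frame force the frames at $s_1,s_2$ to be related by the stabilizer of $p$, a rotation, which must then fix $v$; but $v$ is not normal to the hyperboloid along a nontrivial curve, so the only rotation fixing $v$ and $p$ is the identity, giving $\nu(s_1)=\nu(s_2)$, contradicting strict monotonicity of $\nu$.

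For the properness claim I would argue that an injective immersed curve on $\mathbb{H}^2$ fails to be properly embedded only if it accumulates on itself, i.e. there is a sequence $s_i\to\pm\infty$ with $X(s_i)\to p\in\mathbb{H}^2$. I would rule this out using the preliminary remarks of Section 4: the arclength parameter is unbounded, and for curves trapped in a bounded set of $\mathbb{H}^2$ the functions $\tau,\nu,\mu,\kappa_g$ remain bounded there. If $X(s_i)\to p$ then $\mu(s_i)=\langle X(s_i),v\rangle_\eta\to\langle p,v\rangle_\eta$, a finite limit, so $\mu$ has a convergent subsequence; by Lemma \ref{lemma4.1} any limit of $\mu$ must be $0$, so $\langle p,v\rangle_\eta=0$, meaning $p\in\Gamma_v$. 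But convergence of $\mu$ along the full parameter (as established in Lemmas \ref{lemma4.5}--\ref{lemma4.8}) combined with strict monotonicity of $\nu$ shows $X$ leaves every compact set as $s\to\pm\infty$; the accumulation point $p$ would force $\nu(s_i)$ to converge to $\langle N_p,v\rangle_\eta$ for the limiting normal, yet the distinct passages near $p$ again violate the strict monotonicity of $\nu$. Hence no such accumulation occurs, the map $s\mapsto X(s)$ is proper, and together with injectivity the curve is properly embedded.

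I expect the main obstacle to be the self-intersection step, specifically making rigorous the claim that coincidence of $\mu$ at two parameters together with the oriented-frame structure forces coincidence of $\nu$. The delicate issue is that $\mu(s_1)=\mu(s_2)$ alone leaves two possibilities for $(\tau,\nu)$ via \eqref{eq2.17}, and one must use the \emph{geometric} fact that the whole oriented frame $\{T,N,X\}$ at a genuine self-intersection is constrained by the isometry stabilizing $p$ and fixing the constant vector $v$ — reducing the ambiguity and producing the clean contradiction with strict monotonicity of $\nu$. Handling the edge case where one of the intersection parameters is exactly the critical point $s_0$ of $\mu$ (where $\nu'=0$) will require the extra observation, already used in Lemma \ref{lemma4.6}, that $\mu(s_0)=\mu'(s_0)=0$ would force a geodesic, which is excluded.
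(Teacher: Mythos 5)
Your argument for the non-self-intersection step has a genuine gap at its central claim. You assert that if $X(s_1)=X(s_2)=p$, then the rotation of the tangent plane at $p$ carrying the frame $\{T(s_1),N(s_1)\}$ to $\{T(s_2),N(s_2)\}$ ``must fix $v$,'' whence $\nu(s_1)=\nu(s_2)$, contradicting the strict monotonicity of $\nu$. This does not follow. The vector $v$ is a fixed element of ${\mathbb M}^3$; by \eqref{eq2.16} its components in the two moving frames are $(\tau(s_i),\nu(s_i),-\mu(s_i))$, and coincidence of the base point forces only $\mu(s_1)=\mu(s_2)$ and hence, via \eqref{eq2.17}, $\tau(s_1)^2+\nu(s_1)^2=\tau(s_2)^2+\nu(s_2)^2$. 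That is exactly what one expects when the same tangential projection of $v$ is expressed in two orthonormal bases of the tangent plane at $p$ that differ by the crossing angle: the pair $(\tau,\nu)$ at $s_2$ is a rotation of the pair at $s_1$, with $\nu(s_1)\neq\nu(s_2)$ perfectly admissible. So no contradiction with the monotonicity of $\nu$ is produced, and the self-intersection case where $s_1$ and $s_2$ straddle the unique critical point of $\mu$ is not excluded by your argument. (You correctly flag this as the delicate point, but the proposed resolution via the stabilizer of $p$ is the step that fails.) The same flawed mechanism is reused in your properness paragraph to rule out accumulation points.

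The paper closes this case by an entirely different, global argument: Gauss--Bonnet applied to the region bounded by the loop $X([s_1,s_2])$ gives $\oint_{s_1}^{s_2}\kappa_g\,ds = A+\pi+\alpha>\pi$ (since the Gauss curvature is $-1$ and the enclosed area $A$ is positive), while the soliton relation $\kappa_g=a\tau=a\mu'$ gives $\oint_{s_1}^{s_2}\kappa_g\,ds = a\bigl(\mu(s_2)-\mu(s_1)\bigr)=0$, a contradiction. Note that your observation $\mu(s_1)=\mu(s_2)$ is precisely the input needed for the second identity, so you are one step away: what is missing is a lower bound on the total turning of the loop, which Gauss--Bonnet supplies and which no amount of pointwise frame analysis at $p$ can replace. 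The properness claim then follows, as you indicate, from the fact (Lemmata \ref{lemma4.6}--\ref{lemma4.8}) that the curves escape every bounded set, without any further appeal to $\nu$.
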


\begin{proof}
By way of contradiction, say that a curve $X(s)$ self-intersects at two parameter values $s_2>s_1$ with no self-intersections in between. At the self-intersection, the ingoing and outgoing tangents make angle $\alpha$. The region bounded by $X(s)$ for $s_1\le s\le s_2$ has area $A>0$. Since the Gauss curvature of ${\mathbb H}^2$ is $-1$, Gauss-Bonnet yields
\begin{equation}
\label{eq4.12}
\oint\limits_{s_1}^{s_2} \kappa_g ds=A+\pi+\alpha >\pi\ .
\end{equation}
But at a point of self-intersection we must have $\mu(s_1)=\mu(s_2)$, and so
\begin{equation}
\label{eq4.13}
0=\mu(s_2)-\mu(s_1)=\oint\limits_{s_1}^{s_2} \mu'(s) ds =\oint\limits_{s_1}^{s_2} \tau ds = \frac{1}{a} \oint\limits_{s_1}^{s_2} \kappa_g ds\ .
\end{equation}
Comparing \eqref{eq4.12} and \eqref{eq4.13}, we arrive at a contradiction, so there are no self-intersections. Then since every curve in Lemmata \ref{lemma4.6}--\ref{lemma4.8} escapes any bounded set, there are no accumulation points either, so the curves are properly embedded.
\end{proof}

The main theorem now follows easily from the above results.

\begin{proof}[Proof of Theorem \ref{theorem1.2}] Choose a vector ${\tilde v}$. If it is not null, write ${\tilde v}=av$ such that $v$ is normalized and define the variables $\tau$, $\nu$, and $\mu$ along  smooth curves $X(s)$ using \eqref{eq2.15}. If we require that the system \eqref{eq2.18} holds then from Proposition \ref{proposition2.2}, each solution of \eqref{eq2.18} determines a curve $X(s)$. Solutions of \eqref{eq2.18} are parametrized by the initial data $(\tau_0,\nu_0,\mu_0)$ for \eqref{eq2.18}, subject to the constraint \eqref{eq2.20} at $s=0$. Hence we obtain $2$-parameter families of solutions of \eqref{eq2.18} and of the associated curves $X(s)$. The solution curves have domain $s\in{\mathbb R}$, for $s$ a unit speed parameter, and $\mu=\langle X,v\rangle$ is divergent at least at one end, so the solutions are complete and noncompact.

If $\mu$ converges at one end, then by Lemma \ref{lemma4.1} we have $\mu\to 0$ while by Lemma \ref{lemma4.5} we also have $\tau\to 0$ and thus by Proposition \ref{proposition2.3} then $\kappa_g\to 0$. In this case $X(s)$ approaches a geodesic at this end. Lemmata \ref{lemma4.6}--\ref{lemma4.8} give conditions under which this situation does not arise.

If $\mu$ diverges, invoking Lemma \ref{lemma4.5} for $s\to\infty$, or for $s\to -\infty$ as the case may be, we have that the curvature obeys $\kappa_g\to \pm \frac{1}{a}\neq 0$, and so $X$ approaches a horosphere.

Lemma \ref{lemma4.9} establishes that $X$ is properly embedded.
\end{proof}

\end{document}